\numberwithin{equation}{section}
\newcommand{\din}{d_{\mathrm{in}}}
\newcommand{\Min}{m_{\mathrm{in}}}
\newcommand{\Mout}{m_{\mathrm{out}}}
\newcommand{\Nin}{N_{\mathrm{in}}}
\newcommand{\Nout}{N_{\mathrm{out}}}
\newcommand{\dout}{d_{\mathrm{out}}}
\newcommand{\dsbm}{d_{\mathrm{sbm}}}
\newcommand{\agree}{\mathfrak{a}}
\newcommand{\sbm}{SBM}
\newcommand{\SBM}{\sbm}
\newcommand{\mom}[2]{#1^{(#2)}}
\newcommand{\algo}{\mathtt{AM}}
\newcommand{\ODE}{\mathtt{ODE}}
\renewcommand\subset{\subseteq}
\newcommand{\fT}{\mathfrak T}
\newcommand{\vd}{\vec d}
\newcommand{\vw}{\vec w}
\newcommand{\vX}{\vec X}
\newcommand{\vK}{\vec K}
\newcommand{\vf}{\vec f}
\newcommand{\vp}{\vec p}
\newcommand{\vE}{\vec E}
\newcommand{\vN}{\vec N}
\renewcommand{\epsilon}{\eps}
\newcommand\vU{\vec U}
\newcommand\nix{\,\cdot\,}
\newcommand\vV{\vec V}
\newcommand\vW{\vec W}
\newcommand\dd{{\mathrm d}}
\renewcommand{\vec}[1]{\boldsymbol{#1}}
\newcommand\SIGMA{\vec\sigma}
\newtheorem{definition}{Definition}[section]
\newtheorem{theorem}[definition]{Theorem}
\newtheorem{lemma}[definition]{Lemma}
\newtheorem{proposition}[definition]{Proposition}
\newtheorem{corollary}[definition]{Corollary}
\newtheorem{fact}[definition]{Fact}
\newtheorem{assumption}[definition]{Assumption}
\newcommand\fD{\mathfrak{D}}
\newcommand\fF{\mathfrak{F}}
\newcommand\fp{\mathfrak{p}}
\newcommand\cC{\mathcal{C}}
\newcommand\cT{\mathcal{T}}
\newcommand\cL{\mathcal{L}}
\newcommand\cM{\mathcal{M}}
\newcommand\cV{\mathcal{V}}
\newcommand\vu{\vec u}
\newcommand\vM{\vec M}
\newcommand\eul{\mathrm{e}}
\newcommand\eps{\varepsilon}
\newcommand{\vecone}{\mathbb{1}}
\newcommand{\set}[1]{\left\{#1\right\}}
\newcommand{\Po}{{\rm Po}}
\newcommand{\Bin}{{\rm Bin}}
\newcommand\bc[1]{\left({#1}\right)}
\newcommand\cbc[1]{\left\{{#1}\right\}}
\newcommand\bcfr[2]{\bc{\frac{#1}{#2}}}
\newcommand\brk[1]{\left\lbrack{#1}\right\rbrack}
\newcommand\norm[1]{\left\|{#1}\right\|}
\newcommand\abs[1]{\left|{#1}\right|}
\def\?#1{}
\def\whp{w.h.p\@ifnextchar-{.}{\@ifnextchar.{.\?}{\@ifnextchar,{.}{\@ifnextchar){.}{\@ifnextchar:{.:\?}{.\ }}}}}}
\def\Whp{W.h.p\@ifnextchar-{.}{\@ifnextchar.{.\?}{\@ifnextchar,{.}{\@ifnextchar){.}{\@ifnextchar:{.:\?}{.\ }}}}}}
\newcommand{\tensor}{\otimes}
\newcommand{\Erdos}{Erd\H{o}s}
\newcommand{\Renyi}{R\'enyi}
\newcommand\pr{\mathbb{P}} 
\renewcommand\Pr{\pr} 
\newcommand\Lem{Lemma}
\newcommand\Prop{Proposition}
\newcommand\Thm{Theorem}
\newcommand\Cor{Corollary}
\newcommand\Sec{Section}
\newcommand\id{\mathrm{id}}
 \def\G{{\vec G}}
\def\ex{{\mathbb E}}
\def\pr{{\mathbb P}}
\newcommand{\remove}[1]{}
\begin{document}
	
\title{Bad local minima exist in the stochastic block model}

\author{Amin Coja-Oghlan, Lena Krieg, Johannes Christian Lawnik, Olga~Scheftelowitsch}
\address{Amin Coja-Oghlan, {\tt amin.coja-oghlan@tu-dortmund.de}, TU Dortmund, Faculty of Computer Science and Faculty of Mathematics, 12 Otto-Hahn-St, Dortmund 44227, Germany.}
\address{Lena Krieg, {\tt lena.krieg@tu-dortmund.de}, TU Dortmund, Faculty of Computer Science, 12 Otto-Hahn-St, Dortmund 44227, Germany.}
\address{Johannes Christian Lawnik, {\tt johannes.lawnik@tu-dortmund.de}, TU Dortmund, Faculty of Computer Science, 12 Otto-Hahn-St, Dortmund 44227, Germany.}
\address{Olga Scheftelowitsch, {\tt olga.scheftelowitsch@tu-dortmund.de}, TU Dortmund, Faculty of Computer Science, 12 Otto-Hahn-St, Dortmund 44227, Germany.}

\begin{abstract}%
	We study the disassortative stochastic block model with three communities, a well-studied model of graph partitioning and Bayesian inference for which detailed predictions based on the cavity method exist [Decelle et al.\ (2011)].
	We provide strong evidence that for a part of the phase where efficient algorithms exist that approximately reconstruct the communities, inference based on maximum a posteriori (MAP) fails.
	In other words, we show that there exist modes of the posterior distribution that have a vanishing agreement with the ground truth.
	The proof is based on the analysis of a graph colouring algorithm from [Achlioptas and Moore (2003)].
	\hfill {\em MSc: 05C80,	62B10, 68R10}
\end{abstract}

\maketitle

\section{Introduction}\label{sec_intro}

\subsection{Background and motivation}
The {\em stochastic block model} (`\sbm') is a benchmark generative model of community detection in networks~\cite{AbbeSurvey,Moore}.
Over the past decade-and-a-half there has been an outburst of work on this model, much of it motivated by predictions derived via the cavity method from statistical mechanics~\cite{Decelle}.
Among other things, the predictions concerning the statistical and algorithmic thresholds of the model have largely been confirmed mathematically~\cite{abbe2015detection,CKPZ,mossel2013proof,Mossel,MSS}.
However, a point that has received only perfunctory attention thus far is the location of the modes of the posterior distribution.
Specifically, are the most likely configurations under the posterior aligned with the ground truth so that performing a maximum a posteriori (`MAP') inference is a viable strategy? 
Or might there exist modes that are totally uncorrelated with the ground truth despite the fact that non-trivial detection is possible by other means?

Abbe~\cite{AbbeSurvey} previously demonstrated that MAP fails on the stochastic block model with two communities for certain parameters, albeit for a trivial reason.
Indeed, because even past the statistical threshold the `giant component' of the two-community \SBM\ is still fairly small, one can construct MAP-optimal solutions that have a trivial agreement with the ground truth simply by trimming the small components.
The aim of the present paper is to investigate MAP on a well-studied variant of the \SBM\ where this circumstantial argument does not apply, namely the disassortative \SBM\ with three communities.
Perhaps surprisingly, the main result provides evidence that MAP is {\em not} generally a viable strategy even in this model, despite the fact that the largest component contains about 98\%\ of all vertices at the statistical threshold.

The precise definition of the model is as follows.%
	\footnote{We follow the notation and conventions from~\cite{CKPZ}.}
Let $V_n=\{v_1,\ldots,v_n\}$ be a set of vertices, let $d,\beta>0$ 
and set
\begin{align}\label{eqdindout}
	\din&=\frac{3d\exp(-\beta)}{2+\exp(-\beta)},&\dout&=\frac{3d}{2+\exp(-\beta)}.
\end{align}
The {\em ground truth} is a uniformly random colouring $\SIGMA^*:V_n\to\{1,2,3\}$ of $V_n$ with three colours.
Given $\SIGMA^*$ we generate a random graph $\G^*$ by connecting any two vertices $v,w\in V_n$, $v\neq w$, independently with probability
\begin{align}\label{eqpvw}
	p_{v,w}=1\wedge\frac1n\brk{\vecone\{\SIGMA^*(v)=\SIGMA^*(w)\}\din+\vecone\{\SIGMA^*(v)\neq\SIGMA^*(w)\}\dout}.
\end{align}
Thus, the average degree of $\G^*$ equals $d$.
Further, there are three communities $\SIGMA^{*\,-1}(1),\SIGMA^{*\,-1}(2),\SIGMA^{*\,-1}(3)$ and the choice~\eqref{eqpvw} of the edge probabilities ensures that intra-com\-munity edges are less likely than edges across communities, the more so the larger $\beta$.

The objective is to reconstruct the ground truth $\SIGMA^*$ given $\G^*$ and the parameters $d,\beta$.
Clearly, this amounts to a Bayes-optimal inference task.
Given $\beta>0$ this tasks gets easier as $d$ increases, i.e., as more edges are observed.
To be precise, the {\em Kesten-Stigum threshold}
	\begin{equation}\label{eqdsbm}
	\dsbm(\beta)=\bcfr{2+\exp(-\beta)}{1-\exp(-\beta)}^2.
	\end{equation}
is conjectured to mark the threshold value of $d$ beyond which a non-trivial reconstruction of the ground truth is possible by means of an efficient algorithm~\cite{Decelle}.
Formally, since we can at best expect to reconstruct $\SIGMA^*$ modulo a permutation of the colours, we define the {\em agreement} of a colouring $\sigma\to\{1,2,3\}$ with the ground truth as
\begin{align} \label{eqagree}
	\agree(\sigma)= -\frac12+\frac3{2n}\max_{\kappa\in\mathbb S_q}\sum_{v \in V_n} \vecone \{\SIGMA^*(v) = \kappa\circ\sigma(v)   \}.
\end{align}
Then a strictly positive value of $\agree(\sigma)=\Omega(1)$ indicates that $\sigma$ renders a non-trivial approximation to $\SIGMA^*$.

Abbe and Sandon~\cite{abbe2015detection} contributed an algorithm that for $d>\dsbm(\beta)$ outputs a colouring $\SIGMA_{\mathrm{alg}}:V_n\to\{1,2,3\}$ such that $\agree(\SIGMA_{\mathrm{alg}})=\Omega(1)$ with high probability (`\whp').
Conversely, for $d<\dsbm(\beta)$ the \SBM\ $\G^*$ and the \Erdos-\Renyi\ random graph $\G(n,d/n)$ are conjectured to be mutually contiguous, which implies that a non-trivial reconstruction of the ground truth is statistically impossible in this regime~\cite{Decelle}.

\subsection{Results}
There are two generic approaches to reconstructing the ground truth given the data in any Bayesian inference problem.
The first is to draw a sample from the posterior distribution, i.e., $\pr\brk{\SIGMA^*=\nix\mid\G^*}$ in the case at hand.
The second is to find a maximiser $\sigma$ of the posterior $\pr\brk{\SIGMA^*=\nix\mid\G^*}$; this latter strategy goes by the name of {\em maximum a posteriori} (`MAP') inference.
The computational appeal of MAP is that we `just' need to solve an optimisation problem, rather than a potentially (even) less tractable sampling problem.
While it is not currently known if either of these strategies can be implemented efficiently on the \sbm\ and although for $d>\dsbm(\beta)$ a different efficient algorithm is known to find $\sigma$ with $\agree(\sigma)=\Omega(1)$ \whp, it is of fundamental interest to understand the performance of these two schemes~\cite{AbbeSurvey}.

It is not difficult to see that the statistically optimal reconstruction algorithm is to sample from the posterior~\cite{CEJKK,CKPZ,Decelle}.
By contrast, the main result of the present paper provides evidence that MAP does not generally produce a $\sigma$ with $\agree(\sigma)=\Omega(1)$ for $d>\dsbm$ \whp.
To be precise, consider the {\em loss function}
\begin{align}\label{eqloss1}
	L_{\G^*}(\sigma)=\frac1n\log\frac{\max_{\tau\in\{1,2,3\}^{V_n}}\pr\brk{\SIGMA^*=\tau\mid\G^*}}{\pr\brk{\SIGMA^*=\sigma\mid\G^*}}\geq0.
\end{align}
%It is not difficult to check that \whp\ the loss of the ground truth itself comes to about \aco{double-check the following}
%\begin{align}\label{eqloss2}
%	L_{\G^*}(\SIGMA^*)=-\frac1n\log\pr\brk{\SIGMA^*=\sigma\mid\G^*}\sim\frac{d\beta}{4\eul^{\beta}+2}=\Omega(1).
%\end{align}
%and that the posterior distribution $\pr\brk{\SIGMA^*=\nix\mid\G^*}$ has entropy $\Omega(n)$~\cite{CKPZ}.
We will show, modulo an analytic assumption, that for certain parameters $\beta>0$, $d>\dsbm(\beta)$ \whp\ there exist $\sigma$ with $\agree(\sigma)=o(1)$ such that 
$L_{\G^*}(\sigma)=o(1)$, while $L_{\G^*}(\SIGMA^*)=\Omega(1)$.
In other words, $\sigma$ is a `bad' minimum of the loss function that has no discernible agreement with the ground truth.
In effect, minimising the loss function (viz.\ MAP inference) is not generally a good strategy.
Instead, the decisive advantage of sampling from the posterior over MAP is that the former implicitly takes advantage of the extensive `crater' of the loss function around the ground truth $\SIGMA^*$.
In other words, sampling from the posterior takes into consideration the entropy of the posterior, not just the value of the loss function.

To prove the existence of the `bad' minimiser of the loss function we analyse a combinatorial 3-colouring algorithm on the stochastic block model. 
We consider this algorithm because if a 3-colouring (with colour classes of equal sizes) exists then such a colouring is a MAP-solution.
The algorithm was originally proposed by Achlioptas and Moore~\cite{AchMoore3col} in order to prove that \Erdos-\Renyi\ random graphs of average degrees up to $4.03$ have chromatic number three with high probability. 
The analysis of the algorithm relies on Wormald's method of differential equations~\cite{Wormald}.
Unfortunately, no analytic solution of the system of differential equations is known.
Instead, we rely on a numerical solution.
Because of this, we state the properties of the solution as an {\em assumption} to our main theorem.
We will discuss the merit of this assumption in the appendix.

The system $\ODE(d,\alpha,\Delta)$ of ordinary differential equations comes with two real parameters $d,\alpha>0$ and one integer parameter $\Delta>0$.
The functions of the system are $u_0(t),\ldots,u_{\Delta}(t)$ and $w_0(t),\ldots,w_{\Delta}(t)$.
Let $\Po_{\leq\Delta}(d)$ denote a Poisson variable with mean $d$ conditioned on an outcome of less than or equal to $\Delta$.
Furthermore, let
\begin{align}\label{eqphi}
	\phi&=d^{-1}\sum_{i>\Delta}i\pr\brk{\Po(d)=i}.
\end{align}
Then the initial conditions read
\begin{align}\label{eqODEinitial}
	u_\ell(0)&=\sum_{j=1}^{\Delta-\ell}\pr\brk{\Po_{\leq\Delta}(d)=\ell+j}\pr\brk{\Bin(\ell+j,\phi)=j},&
	w_\ell(0)&=\pr\brk{\Po_{\leq\Delta}(d)=\ell}(1-\phi)^\ell&&(0\leq\ell\leq\Delta).
\end{align}
Furthermore, the system of differential equations reads
\begin{align}\label{eqODE}
	\frac{\dd w_\ell}{\dd t}&=-\frac{\kappa\ell w_\ell}{\sum_{i=1}^\Delta i(u_i+w_i)},\qquad
	\frac{\dd u_\ell}{\dd t}=\kappa\frac{\vecone\{\ell<\Delta\}(\ell+1)(w_{\ell+1}+u_{\ell+1}/3)-\ell u_\ell}{\sum_{i=1}^\Delta i(u_i+w_i)}-\frac{\ell^\alpha u_\ell}{\sum_{i=1}^\Delta i^\alpha u_i}\qquad\mbox{where}\\
\kappa&=\frac{3\sum_{i=1}^\Delta i(u_i+w_i)}{\sum_{i=1}^\Delta 3iw_i+(5i-2i^2)u_i}\cdot\frac{\sum_{i=1}^\Delta i^{\alpha+1}u_i}{\sum_{i=1}^\Delta i^{\alpha}u_i}
\label{eqkappa}
\end{align}

\begin{assumption}\label{assumption_ode}
	Given $d>0$ we assume that there exists $\Delta_0>0$, $\alpha\geq1$, $\xi>0$ and $t^*\in(0,1)$ such that for any $\Delta>\Delta_0$ the system~\eqref{eqODEinitial}--\eqref{eqkappa} possesses a solution on the interval $[0,t^*+\xi]$ that satisfies 
	\begin{align}\label{eqODEpos}
		u_\ell(t),w_\ell(t)&>\xi&&\mbox{for all $0\leq t\leq t^*+\xi$, $0\leq\ell\leq\Delta$},\\
		\label{eqODEend}
		\sum_{\ell=0}^\Delta\ell(\ell-2)(u_\ell(t^*)+w_\ell(t^*))&<-\xi,&&\mbox{and}\\
		\frac{\sum_{i=1}^\Delta i(i-1)u_i(t)}{\sum_{i=1}^\Delta i(w_i(t)+u_i(t))}&<\frac32-\xi&&\mbox{for all }0\leq t\leq t^*+\xi.
\label{eqODElambda}
	\end{align}
\end{assumption}

The main result of this paper shows that under Assumption~\ref{assumption_ode} the loss function of stochastic block model $\G^*$ \whp\ possesses a bad minimum $\sigma$ of vanishing loss $o(1)$ and vanishing agreement with the ground truth, while the actual ground truth $\SIGMA^*$ has loss $\Omega(1)$.

\begin{theorem}\label{thm_main}
	Let $d>0$ be such that Assumption~\ref{assumption_ode} holds and let $\beta>0$.
	Then \whp\ there exists $\sigma:V_n\to\{1,2,3\}$ such that
	\begin{align*}
		\agree(\sigma)&=o(1)&\mbox{ and }&&L_{\G^*}(\sigma)&=o(1),&\mbox{while}&&
		L_{\G^*}(\SIGMA^*)&=\frac{d\beta+o(1)}{4\eul^{\beta}+2}.
	\end{align*}
\end{theorem}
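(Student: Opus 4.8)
\emph{Reducing the loss to a combinatorial quantity.} Write $\ell(\tau)=\log\pr\brk{\G^*\mid\SIGMA^*=\tau}$. Since $\SIGMA^*$ is uniform, Bayes' rule gives $\pr\brk{\SIGMA^*=\tau\mid\G^*}=\eul^{\ell(\tau)}/\sum_{\tau'}\eul^{\ell(\tau')}$, so the normalisation cancels in~\eqref{eqloss1} and $L_{\G^*}(\sigma)=\frac1n(\max_\tau\ell(\tau)-\ell(\sigma))$. Plugging the edge probabilities~\eqref{eqpvw} into $\ell(\tau)$, using $\log(1-x)=-x+O(x^2)$ and $\din/\dout=\eul^{-\beta}$ (from~\eqref{eqdindout}), and writing $M(\tau)$ for the number of $\tau$-monochromatic edges of $\G^*$ and $P(\tau)=\sum_i\binom{|\tau^{-1}(i)|}{2}$ for the number of $\tau$-monochromatic vertex pairs, one gets $\ell(\tau)=-\beta M(\tau)+\frac{\dout-\din}{n}P(\tau)+C(\G^*)+o(n)$, uniformly over $\tau$, where $C(\G^*)$ is independent of $\tau$ and the error uses $M(\tau)\le e(\G^*)=O(n)$ \whp. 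Thus the theorem reduces to two claims: (A) \whp\ there is a colouring $\sigma$ with $M(\sigma)=0$, $P(\sigma)=\frac{n^2}{6}+o(n^2)$ and $\agree(\sigma)=o(1)$; and (B) \whp\ $\max_\tau\bigl(-\beta M(\tau)+\tfrac{\dout-\din}{n}P(\tau)\bigr)\le\tfrac{n}{6}(\dout-\din)+o(n)$. Indeed, (A) yields $\ell(\sigma)=C(\G^*)+\tfrac n6(\dout-\din)+o(n)$; (B) says $\max_\tau\ell(\tau)\le\ell(\sigma)+o(n)$, hence also equals $\ell(\sigma)+o(n)$, which is exactly $L_{\G^*}(\sigma)=o(1)$; and using $P(\SIGMA^*)=\tfrac{n^2}{6}+o(n^2)$ (balance of the planted communities) together with $M(\SIGMA^*)=\tfrac\din6 n+o(n)$ (a concentrated binomial) one finds $\ell(\SIGMA^*)=C(\G^*)+\tfrac n6(\dout-\din-\beta\din)+o(n)$, whence $L_{\G^*}(\SIGMA^*)=\tfrac1n(\max_\tau\ell(\tau)-\ell(\SIGMA^*))=\tfrac{\beta\din}{6}+o(1)=\tfrac{d\beta}{4\eul^\beta+2}+o(1)$ by~\eqref{eqdindout}.

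\emph{Claim (B): no colouring beats a balanced proper one.} Equivalently one must show that \whp\ every $\tau$ satisfies $M(\tau)\ge\tfrac{\dout-\din}{2\beta}\,q(\tau)\,n-o(n)$, where $q(\tau)=\sum_i(|\tau^{-1}(i)|/n)^2-\tfrac13\ge0$ measures the imbalance; only $q(\tau)=\Omega(1)$ is an issue. I would establish this by a first-moment estimate over colour-class profiles: if $q(\tau)\ge\eps$ then some class has $\ge(\tfrac13+\eps')n$ vertices, and since the planted communities are balanced a positive fraction of the monochromatic pairs inside it straddle two communities, which conditionally on $\SIGMA^*$ pushes $\Erw[M(\tau)\mid\SIGMA^*]$ above $\tfrac\din6 n+\Omega_\eps(n)$; a Chernoff bound together with the entropy of the profile then makes the union bound converge, the bookkeeping for small constant $\eps$ (where $\din$ may itself be small) being the delicate point and the place where the regime of $(d,\beta)$ for which Assumption~\ref{assumption_ode} can hold is used. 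Alternatively, (B) can be read off from a characterisation of the annealed free energy of the model in this low-density regime.

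\emph{Claim (A): the Achlioptas--Moore colouring.} Here is where Assumption~\ref{assumption_ode} enters. I would run the $3$-colouring algorithm of~\cite{AchMoore3col} on $\G^*$. As a \emph{graph}, $\G^*$ is locally the Poisson$(d)$ configuration model — the community labels affect neither the degree distribution nor the bounded-radius shape of neighbourhoods — so the algorithm's evolution is distributed as on $\G(n,d/n)$: after the (negligibly few, since $\phi\to0$ as $\Delta\to\infty$) vertices of degree $>\Delta$ are pre-coloured, the type counts $(W_\ell(t),U_\ell(t))$ (uncoloured vertices with $\ell$ uncoloured neighbours and, respectively, zero or at least one coloured neighbour) form a jump process with bounded one-step changes and a drift matching~\eqref{eqODE}--\eqref{eqkappa} with initial condition~\eqref{eqODEinitial}. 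Wormald's method~\cite{Wormald} then gives $(W_\ell,U_\ell)/n=(w_\ell,u_\ell)+o(1)$ uniformly on $[0,t^*+\xi]$, where condition~\eqref{eqODEpos} keeps the relevant denominators bounded away from $0$ (so the method applies and no type empties), condition~\eqref{eqODElambda} controls the cascade of forced recolourings so that no vertex loses all three colours, and condition~\eqref{eqODEend} ensures that at time $t^*$ the uncoloured graph has degree distribution $p_\ell\propto u_\ell(t^*)+w_\ell(t^*)$ with $\sum_\ell\ell(\ell-2)p_\ell<0$, i.e.\ it is subcritical and can be completed to a proper $3$-colouring greedily. This produces $\sigma$ with $M(\sigma)=0$ \whp; and because the algorithm's rules are invariant under permuting the colour names, $\Erw|\sigma^{-1}(i)|=n/3$ for each $i$, so a bounded-differences argument gives $P(\sigma)=\tfrac{n^2}{6}+o(n^2)$.

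\emph{Vanishing agreement, and the main obstacle.} To obtain $\agree(\sigma)=o(1)$ I would enrich the differential-equation analysis with the $\SIGMA^*$-community label, tracking the joint empirical distribution of (algorithmic type of a vertex, its $\SIGMA^*$-community) and the counts $N_{c,j}(t)$ of vertices coloured $c$ by the algorithm and lying in community $j$. Since the algorithm never consults $\SIGMA^*$, at $t=0$ every algorithmic type has uniform community composition (the degree of a vertex, hence whether it is a $w$- or a $u$-vertex, is independent of its community); and since the colour rule is colour-symmetric, the state in which all $N_{c,j}(t)$ are equal is a fixed point of the drift. As this is the initial state, the trajectory stays at it, so $N_{c,j}(\text{final})=\tfrac n9+o(n)$ \whp\ for every $c,j$, whence for each $\pi\in\mathbb S_3$ one has $\sum_v\vecone\{\SIGMA^*(v)=\pi(\sigma(v))\}=\sum_cN_{c,\pi(c)}(\text{final})=\tfrac n3+o(n)$, i.e.\ $\agree(\sigma)=o(1)$. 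The hard part of the whole argument is making this last step rigorous: carrying all of Wormald's hypotheses through for the Achlioptas--Moore process \emph{on the \sbm} rather than quoting the $\G(n,d/n)$ analysis, and, above all, proving that these ``community-correlation'' coordinates genuinely remain at the symmetric fixed point throughout $[0,t^*+\xi]$ — it is precisely this that rules out the \emph{a priori} danger that the algorithm stumbles onto a ``cleaned-up'' version of the ground truth.
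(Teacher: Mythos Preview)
Your overall strategy matches the paper's exactly: reduce the loss via Bayes' rule to the count of monochromatic edges and the balance of the colour classes (the paper's Fact~7.1), construct an almost-proper balanced colouring $\sigma$ via the Achlioptas--Moore algorithm, and show $\agree(\sigma)=o(1)$ by verifying that the planted colours remain uniformly spread across the algorithm's types throughout the run. Two points deserve comment.

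\textbf{Claim (A), first issue.} You assert $M(\sigma)=0$ \whp, but this is stronger than what the argument yields. Condition~\eqref{eqODElambda} makes the cascade of forced colourings a \emph{sub-critical} branching process; this gives an $O(1/n)$ probability of a bad vertex per epoch, hence $O(1)$ bad vertices in expectation and $o(n)$ \whp\ --- not zero \whp. The paper accordingly only claims $\Min(\G^*,\sigma)=o(n)$ \whp\ (which is all that is needed for $L_{\G^*}(\sigma)=o(1)$); the exact statement $M(\sigma)=0$ holds only with probability $\Omega(1)$ and is the content of \Cor~\ref{cor_main}, not \Thm~\ref{thm_main}.

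\textbf{Claim (A), second issue.} The sentence ``as a graph, $\G^*$ is locally the Poisson$(d)$ configuration model \ldots\ so the algorithm's evolution is distributed as on $\G(n,d/n)$'' is not a valid deduction. $\algo$ is not a local algorithm, and the principle of deferred decisions on $\G^*$ requires conditioning on the \emph{refined} degree vector $(\vd_{v,1},\vd_{v,2},\vd_{v,3})$ and on $\SIGMA^*$ --- conditioning only on the total degree does \emph{not} leave a uniformly random graph. The paper therefore does not quote the \Erdos--\Renyi\ analysis; it tracks the fine-grained variables $\vW_{(s,d_1,d_2,d_3)}(t)$ and $\vU_{c,(s,d_1,d_2,d_3)}(t)$, derives the conditional-expectation recursions for them directly on the \SBM\ (\Prop~\ref{prop_condex}), passes to the corresponding large ODE system (Fact~\ref{prop_diffeq}), and then exhibits the ``flat-white'' ansatz~\eqref{eq_prop_flat_w}--\eqref{eq_prop_flat_u} as an explicit solution (\Prop~\ref{prop_flat}). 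The non-trivial check is that the spectral condition $\|M\|<1$ on this much larger matrix follows from~\eqref{eqODElambda} (\Lem~\ref{lem_eigenvector}). This \emph{is} precisely your ``symmetric fixed point is preserved'' heuristic from the last paragraph, but it is the entire technical content of the proof, not a wrap-up step --- so the Claim~(A) paragraph as written skips over the heart of the matter that you correctly identify only at the end.

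\textbf{Claim (B).} Your first-moment sketch is more than the paper offers --- the paper simply asserts the implication $\eqref{eq_thm_main_3}\Rightarrow\eqref{eq_thm_main_5}$ from $\din<\dout$. Your sketch is on the right track but note that the naive union bound over all $3^n$ colourings does not close for $d\approx4$, so one really does have to group by overlap profile as you indicate; neither your proposal nor the paper spells this out.
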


Numerical and experimental evidence suggests that Assumption~\ref{assumption_ode} is satisfied for $d$ up to at least $4.03$ (see Appendix~\ref{apx:num_res}).
Since the threshold $\dsbm(\beta)$ from~\eqref{eqdsbm} tends to $4$ as $\beta\to\infty$, this demonstrates that for large enough $\beta$ there exist $d>\dsbm(\beta)$ such that \Thm~\ref{thm_main} `bites'; specifically, for $\beta=6$ we have $\dsbm(\beta)\approx4.0299$.
Hence, for $d=4.03$ and $\beta>6$ a sample $\SIGMA$ from the posterior $\pr\brk{\SIGMA^*=\nix\mid\G^*}$ \whp\ has extensive agreement $\agree(\SIGMA)=\Omega(1)$ with the ground truth, while there exist $\sigma$ with loss $o(1)$ such that $\agree(\sigma)=o(1)$.

The proof of \Thm~\ref{thm_main} actually shows that under Assumption~\ref{assumption_ode} a {\em strict} minimiser of the loss function, i.e., an actual mode of the posterior, with vanishing agreement exists with a probability that remains bounded away from zero as $n\to\infty$.

\begin{corollary}\label{cor_main}
	Let $d>0$ be such that Assumption~\ref{assumption_ode} holds and let $\beta>0$.
	Then with probability $\Omega(1)$ there exists $\sigma:V_n\to\{1,2,3\}$ such that
\begin{align*}
		\agree(\sigma)&=o(1)&\mbox{ and }&&L_{\G^*}(\sigma)&=0,&\mbox{while}&&
		L_{\G^*}(\SIGMA^*)&=\frac{d\beta+o(1)}{4\eul^{\beta}+2}.
	\end{align*}
\end{corollary}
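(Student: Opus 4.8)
The plan is to reuse the full machinery behind Theorem~\ref{thm_main}, changing only the final ``rounding'' step. The proof of Theorem~\ref{thm_main} analyses the Achlioptas--Moore 3-colouring algorithm on $\G^*$ via Wormald's differential equation method: Assumption~\ref{assumption_ode} guarantees that, for a suitable truncation parameter $\Delta$, the trajectories $u_\ell(t),w_\ell(t)$ stay bounded away from $0$ up to time $t^*$, so the algorithm never gets stuck, and condition~\eqref{eqODEend} ensures that at time $t^*$ the residual graph on the as-yet-uncoloured vertices has average excess degree below the relevant threshold, hence is subcritical and decomposes into small components. Condition~\eqref{eqODElambda} controls the branching ratio along the way so that the exploration stays tree-like. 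First I would invoke exactly this analysis to obtain, \whp, a partial proper 3-colouring of $\G^*$ in which every colour class has size $(1/3+o(1))n$, the uncoloured part spans only $o(n)$ vertices, and each component of the uncoloured part has $O(\log n)$ vertices and is (\whp) a tree or unicyclic. The independence of the colour-choices from $\SIGMA^*$ — the algorithm never looks at the planted colouring — gives $\agree(\sigma)=o(1)$ by a second-moment/concentration argument on the overlap, just as in Theorem~\ref{thm_main}.

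The one new point is to complete this partial colouring to a \emph{proper} 3-colouring of all of $\G^*$, so that $\sigma$ is a genuine equitable 3-colouring and therefore an exact maximiser of the posterior, giving $L_{\G^*}(\sigma)=0$ rather than merely $o(1)$. Since each uncoloured component is a tree or a single cycle (\whp\ the subcritical residual graph contains no component with two or more independent cycles), and trees and even cycles are 3-colourable while odd cycles are 3-colourable outright (they are $2$-degenerate with $\ge 3$ available colours), every component can be properly 3-coloured; moreover one has enough freedom of colour-choice at each such component to respect the boundary constraints imposed by the already-coloured neighbours, because every uncoloured vertex has at most two coloured neighbours with high probability (this is where the degree truncation $\Delta$ and~\eqref{eqODElambda} are used). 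A short deterministic lemma — greedily colour each small component, at each vertex of degree $\le 2$ in the residual-plus-boundary graph at least one colour remains — finishes the completion. Finally I would adjust the colouring to be \emph{exactly} equitable: swapping colours between the $o(n)$ vertices of the small components (or, if necessary, recolouring an $o(n)$ ``buffer'' set produced by the algorithm) corrects the class sizes to $\lceil n/3\rceil,\lfloor n/3\rfloor$ without affecting properness or the $o(1)$ agreement, and an exactly equitable proper 3-colouring is, by~\eqref{eqloss1} and the computation of $\pr[\SIGMA^*=\sigma\mid\G^*]$ already carried out for Theorem~\ref{thm_main}, a posterior mode, i.e.\ $L_{\G^*}(\sigma)=0$. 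The claim $L_{\G^*}(\SIGMA^*)=(d\beta+o(1))/(4\eul^\beta+2)$ is identical to Theorem~\ref{thm_main} and needs no change; it quantifies the gap between the planted colouring's posterior weight and that of an equitable proper colouring.

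The reason the conclusion holds only with probability $\Omega(1)$ rather than \whp\ is that the event ``$\G^*$ admits an equitable proper 3-colouring at all'' is not known to have probability $1-o(1)$ near $d\approx 4$: the chromatic-number threshold for $\G(n,d/n)$ (and hence for $\G^*$, which is contiguous to it below $\dsbm$ and in any case has the same local structure) is conjectured to be around $4.69$, and the Achlioptas--Moore algorithm is only known to succeed \emph{with positive probability bounded away from zero} — Wormald's method controls the trajectory on the event that the algorithm runs to completion, an event of probability $\Omega(1)$, and one does not get concentration of the \emph{success indicator} itself. I expect this to be the main obstacle: one must be careful to state the differential-equation approximation conditionally on (or intersected with) the positive-probability survival event, and to verify that on that event all the structural conclusions above — equitable class sizes, $o(n)$ uncoloured vertices, small tree/unicyclic components, $\le 2$ coloured neighbours per uncoloured vertex — hold \whp\ \emph{relative to} that event, so that their intersection still has probability $\Omega(1)$. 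Everything else is a routine adaptation of the Theorem~\ref{thm_main} proof, with ``loss $o(1)$'' upgraded to ``loss $0$'' via the deterministic completion-and-balancing lemma.
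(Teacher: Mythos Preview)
Your overall strategy --- run the $\algo$ analysis from \Thm~\ref{thm_main}, then upgrade the resulting colouring to an exact proper equitable one --- is the same as the paper's, but your diagnosis of \emph{why} the conclusion holds only with probability $\Omega(1)$ is off, and this misdiagnosis creates a genuine gap.

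You write that Wormald's method ``controls the trajectory on the event that the algorithm runs to completion, an event of probability $\Omega(1)$''. That is not what happens: Fact~\ref{prop_diffeq} and \Prop~\ref{prop_flat} give the ODE approximation \whp, unconditionally. The $\Omega(1)$ in the corollary comes from three separate combinatorial obstructions, each absent with probability bounded away from zero but \emph{not} \whp:
\begin{itemize}
\item the high-degree subgraph $\G^\#$ is acyclic, so the preprocessing colouring $\SIGMA^\#$ of \Prop~\ref{prop_high} is actually proper (\Lem~\ref{lem_high_subcrit});
\item no \emph{bad vertex} (one whose list empties) arises during any epoch of $\algo$ --- each epoch produces one with probability $O(1/n)$, there are $O(n)$ epochs, and the events are conditionally independent given $(\fD_t)_t$ (proof of \Prop~\ref{prop_condex});
\item the residual graph $\G^*_{\Delta,\lfloor t^*n\rfloor}$ is acyclic, so the list colouring $\vec\tau$ of \Prop~\ref{prop_cleanup} is proper.
\end{itemize}
Your proposal instead asserts that the partial colouring produced by $\algo$ is \emph{proper} \whp\ and that the uncoloured part spans $o(n)$ vertices. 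Neither is true: the partial colouring has $O(1)$ expected bad vertices (so it is proper only with probability $\Omega(1)$), and the residual at time $t^*n$ has $\Omega(n)$ vertices, since $u_\ell(t^*),w_\ell(t^*)>\xi$ by~\eqref{eqODEpos}. Your ``greedy completion with at most two coloured neighbours'' lemma does not touch the bad-vertex issue at all, and condition~\eqref{eqODElambda} is not a statement about boundary degrees of uncoloured vertices --- it bounds the spectral norm of the offspring matrix $M$ so that each forced-colouring cascade is a subcritical branching process (\Lem~\ref{lem_eigenvector}).

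The paper proceeds by intersecting the three $\Omega(1)$ events above with the \whp\ structural conclusions (flat-white trajectory, approximate balance, $\agree(\SIGMA^\dagger)=o(1)$) to obtain a proper $3$-colouring $\SIGMA^\dagger$ with probability $\Omega(1)$, and then makes it exactly balanced by recolouring some of the $\Omega(n)$ \emph{isolated} vertices of $\G^*$ --- cleaner than a buffer-swap, since isolated vertices have no edge constraints. The computation of $L_{\G^*}(\SIGMA^*)$ is indeed unchanged from \Thm~\ref{thm_main}.
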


\subsection{Further related work}\label{sec_related}
The stochastic block model was originally suggested by Holland, Laskey and Leinhardt~\cite{Holland}.
Although the model had been investigated earlier in various contexts, its modern study began with the heuristic work of Decelle,  Krzakala, Moore, Zdeborov\'a~\cite{Decelle}.
Based on the cavity method from statistical physics~\cite{MM}, they predicted thresholds for the non-trivial reconstruction of the planted partition.
Specifically, they obtained information-theoretic and algorithmic thresholds.
The former disregard algorithmic efficiency, while the latter ask for a polynomial time reconstruction algorithm.
A substantial amount of rigorous work has by now been dedicated to proving these predictions.
The surveys~\cite{Abbe,Moore} provide an excellent summary of this body of work.%
	\footnote{We point out that the performance of MAP inference improves once we go beyond the regime of the {\em sparse} \sbm, i.e., the case of bounded average degree, which is the focus of the present work.  Specifically, once $d=\Omega(\log n)$ the MAP solution can be expected to align with the ground truth; see~\cite{Abbe}.}
Two more recent important contributions \cite{MSS,Sly23} deal with the precise reconstruction thresholds in the assortative and disassortative case.

A well-known open question that is only indirectly related to the stochastic block model is to determine the chromatic number of a sparse \Erdos-\Renyi\ random graph. 
In fact, among the numerous open problems posed in their seminal paper~\cite{ER60}, this is the only that that still remains unsolved.
The objective of the aforementioned work of Achlioptas and Moore~\cite{AchMoore3col} was to provide a better lower bound on the 3-colourability threshold of the \Erdos-\Renyi\ random graph.
Their result (average degree $4.03$) still stands as the best current lower bound.
For general numbers of colours Achlioptas and Naor~\cite{AchNaor} obtained lower bounds on the colourability threshold via the (non-constructive) second moment method; see also~\cite{Danny}.
%A natural question might be whether this approach extends to the disassortative \SBM\ with a larger number of colours.
%However, the method of moments is conceptually limited to average degrees below the so-called condensation phase transition~\cite{Cond,pnas}, which coincides with the information-theoretic threshold of the \SBM\ in the limit $\beta\to\infty$.
The best current upper bounds on the colourability thresholds can be found in~\cite{catherine} (the bound for three colours is $4.697$, matching a prediction from~\cite{pnas}).

An important paper by Alon and Kahale~\cite{AlonKahale} expressly deals with the 3-community \sbm\ with $\beta=\infty$, also known as the planted 3-colouring model.
Alon and Kahale provide an efficient algorithm that produces a 3-colouring of this random graph under the assumption that $d$ exceeds a sufficiently large (implicit) constant.
An open question raised in their paper asks whether such an algorithm exists for {\em all} $d$.
The proof of \Cor~\ref{cor_main} provides a (very) partial answer: under Assumption~\ref{assumption_ode} for $d<4.03$ a 3-colouring can be found in polynomial time with at least a non-vanishing probability $\Omega(1)$.

Finally, this paper has partly been inspired by the experimental/heuristic work of Liu, Papailiopoulos and Achlioptas~\cite{LPA} on the performance of stochastic gradient descent.
The main point of that paper is to present examples where the stochastic gradient descent algorithm gets stuck in local minima of the loss function that generalise poorly.
In a sense, the present paper establishes at least the {\em existence} of a corresponding bad local minimum on the \sbm\ in which a reasonable colouring algorithm gets stuck (under Assumption~\ref{assumption_ode}).
Indeed, if we imagine that the edges of the \SBM\ are presented in an online fashion, then the partition $\sigma$ from \Thm~\ref{thm_main} with $\agree(\sigma)=o(1)$ also generalises poorly as every new edge has about a $1/3$ probability of landing squarely within one of the partition classes.

\section{Proof strategy}\label{sec_outline}

\noindent
The proof of \Thm~\ref{thm_main} is algorithmic.
Specifically, we are going to analyse the aforementioned 3-colouring algorithm that we call $\algo$ from Achlioptas and Moore~\cite{AchMoore3col}, originally devised to prove that \Erdos-\Renyi\ random graphs of average degree $4.03$ are $3$-colourable \whp\
Accordingly, Achlioptas and Moore analysed $\algo$ on the \Erdos-\Renyi\ model by means of the method of differential equations.
The ensuing system of differential equations is precisely the one displayed in~\eqref{eqODEinitial}--\eqref{eqkappa}. % and their result hinges on a numerical verification of Assumption~\ref{assumption_ode} for specific values of $\alpha,\Delta,\xi$.

To prove \Thm~\ref{thm_main} we will adapt the analysis of Achlioptas and Moore to the \SBM.
Although this may sound pretty straightforward, the devil is in the detail.
Specifically, the differential equations that describe the evolution of the algorithm on the stochastic block model come in terms of a {\em much} larger number of variables, namely basically $12(\Delta+1)^3$ variables by comparison to the $2(\Delta+1)$ variables $w_\ell$, $u_\ell$ from~\eqref{eqODE}.
Also the combinatorial (branching process) argument from which these differential equations are derived seems at first glance far more intricate than the corresponding analysis on the \Erdos-\Renyi\ model.
Yet we will show that the number of variables can be reduced and that the branching process can be simplified, so that ultimately the very same differential equations that `drive' the $\algo$ algorithm on the \Erdos-\Renyi\ graph also describe its evolution on the \SBM.
%In fact, the present analysis shows that the branching process argument even on \Erdos-\Renyi\ random graph can be simplified a bit by comparison to~\cite{AchMoore3col}.

\subsection{The $\algo$ algorithm}\label{sec_AM}

The $\algo$ algorithm attempts to find a proper 3-colouring of its input graph $G$, i.e., a colouring of the vertices such that any two adjacent vertices receive distinct colours.
In every iteration of its main loop the algorithm assigns one of the three possible colours $\{1,2,3\}$ to a vertex for good; thus, there is no backtracking.
For every vertex $v$ that has not been coloured yet the algorithm maintains a list $\cL_v$ of colours that can potentially be assigned to $v$.
An initial set of lists $(\cL_v)_{v\in V(G)}$ is part of the input.
We are going to assume that for at least a few vertices $v$ the initial list $\cL_v$ contains only two colours.
For a vertex $v$ we denote by $\partial v=\partial_Gv$ its set of neighbours in the graph $G$.

Given $G$ and $(\cL_v)_{v\in V(G)}$, the algorithm sets about its task as follows.
So long as there exist uncolured vertices $v$ whose list $\cL_v$ only contains a single colour, $\algo$ assigns that colour to $v$.
Subsequently the algorithm removes $v$ from the graph (along with any incident edges) and also removes the colour assigned to $v$ from the lists of any neighbours $u\in\partial v$.
If no such `forced' vertices exists, the algorithm looks for a vertex $v$ with $|\cL_v|=2$.
Among these vertices the algorithm chooses one randomly with a probability that depends on the remaining number of uncoloured neighbours of $v$.
To be precise, the probability of selecting a given $v$ is proportional to the number of remaining uncoloured neighbours raised to the power of $\alpha\geq1$.
Think of $\alpha$ as a `large' number so that the algorithm prioritises vertices of high degree.
The chosen vertex $v$ is then coloured with a uniformly random colour from its list $\cL_v$.
Subsequently we remove $v$ from $G$ and delete the colour assigned to $v$ from the lists $\cL_u$ of all neighbours $u\in\partial v$.
Finally, if the algorithm runs out of vertices with one or two available colours, it stops and returns the resulting partial colouring $\sigma$.

Since every time the algorithm assigns a colour to a vertex $v$ it removes that colour from the lists of all neighbours $u\in\partial v$ of $v$, it can happen that some vertices $u$ run out of colours entirely, i.e., that $\cL_u$ runs empty.
We call these vertices $u$ {\em bad}.
Clearly, whenever a bad vertex occurs the algorithm fails at its task of constructing a proper 3-colouring of $G$.
Yet the analysis of $\algo$ will show that under Assumption~\ref{assumption_ode} the expected number of bad vertices on the \SBM\ is bounded, and thus negligible towards the proof of \Thm~\ref{thm_main}.
In fact, to prove \Cor~\ref{cor_main} we will argue that with probability $\Omega(1)$ no bad vertices occur.

A second issue is that $\algo$ fails colour {\em all} vertices of $G$ unless every connected component of the input graph contains a vertex $v$ whose initial list satisfies $|\cL_v|<3$.
This is because the {\tt while}-loop stops if no vertices $v$ with $|\cL_v|\in\{1,2\}$ remain.
Hence, we will need to mop up the remainder of the graph by different means.
As we will see, on the \SBM\ under Assumption~\ref{assumption_ode} this is easy because the remainder is a sub-critical random graph \whp, i.e., it decomposes into small and (mostly) acyclic components.
The pseudocode of $\algo$ is displayed below as Algorithm~\ref{alg_AM}.

\IncMargin{1em}
\begin{algorithm}[h!]
	\KwData{A graph $G$ and initial lists $(\cL_v)_{v\in G}$}
 \KwResult{a partial colouring $\sigma$}
 \While{$\exists v\in V(G):0<|\cL_v|<3$}{
%	 \If{$\exists v\in V(G):\cL(v)=\emptyset$}{pick vertex $v$ with $\cL(v)=\emptyset$ and $\sigma_v\in\{1,2,3\}$ uniformly at random}

	\If{$\forall v\in V(G):|\cL_v|\geq2$}{pick $v\in V(G)$ with $|\cL_v|=2$ randomly with probability
		$$p_{G,\alpha}(v)=\frac{|\partial v|^\alpha}{\sum_{w\in V(G):|\cL_w|=2}|\partial w|^\alpha}$$
	and then select $\sigma_v\in\cL_v$ uniformly; if $|\partial w|=0$ for all $w\in V(G)$, then select $v\in V(G)$ uniformly at random.}
	 \Else%If{$\exists v\in V(G)$ with $|\cL_v|=1$}
	 {pick a uniformly random $v\in V(G)$ with $|\cL_v|=1$ and let $\sigma_v\in\cL_v$}
%\Else{\Return $\sigma$}
\For{$u\in\partial v$}{remove colour $\sigma_v$ from $\cL_u$}
	delete vertex $v$ from $G$\\
%	\While{$\exists w\in V(G):\cL(w)=\emptyset$}{delete $w$ from $G$}
	}
	\Return $\sigma$
   \caption{$\algo$.}\label{alg_AM}
\end{algorithm}
\DecMargin{1em}

%\subsection{Running $\algo$ on the \SBM}\label{sec_run_algo}
\subsection{High-degree vertices}\label{sec_run_algo}
To prove \Thm~\ref{thm_main} we are going to analyse $\algo$  on the \SBM\ $\G^*$ via the method of differential equations from~\cite{Wormald}.
Achlioptas and Moore used the same method to analyse the algorithm on the \Erdos-\Renyi\ model, which is identical to the \SBM\ with $\beta=0$.
Following their approach, we will track the evolution of the numbers of vertices of each degree and colour list $\cL_v$ as $\algo$ executes.
In addition, we will need to take the planted colour $\SIGMA^*(v)$ into account.

However, as also pointed out in~\cite{AchMoore3col}, an inherent limitation of the differential equations method is that it can only track a bounded number of variables.
Yet the maximum degree of $\G^*$ diverges as $n\to\infty$.
To deal with this issue, we follow Achlioptas and Moore by first colouring and deleting all vertices whose degree exceeds an a priori cutoff $\Delta$.
Thus, obtain $\G^*_\Delta$ by deleting from $\G^*$ all vertices of degree greater than $\Delta$.

We need to extract a bit of information about $\G^*_\Delta$.
First and foremost, what is the number of vertices of each planted colour that remain?
Formally, let $\vN_s$ be the number of vertices $v$ of $\G^*_\Delta$ with $\SIGMA^*(v)=s$.
Furthermore, let $\vN_{s,d_1,d_2,d_3}$ be the number of vertices $v$ with $\SIGMA^*(v)=s$ that have precisely $d_r$ neighbours $w$ such that $\SIGMA^*(w)=r$ in $\G^*_\Delta$ for $r=1,2,3$.
Routine calculations evince the following.

\begin{fact}\label{prop_Delta}
	\Whp\ for $s=1,2,3$ we have
	\begin{align}\label{eqNs}
		\vN_s&=\frac n3\pr\brk{\Po(d)\leq\Delta}+O(\sqrt n\log n).
	\end{align}
	Furthermore, \whp\ for all $d_1,d_2,d_3\geq0$ with $d_1+d_2+d_3\leq\Delta$ we have
	\begin{align}\label{eqNsd1d2d3}
		\vN_{s,d_1,d_2,d_3}&=\frac n3\pr\brk{\Po(d)=d_1+d_2+d_3}\binom{d_1+d_2+d_3}{d_1,d_2,d_3}\frac{\exp(-\beta d_s)}{(2+\exp(-\beta))^{d_1+d_2+d_3}}+O(\sqrt n\log n).
	\end{align}
\end{fact}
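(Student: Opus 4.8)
The plan is to establish both identities as first-moment computations followed by a routine concentration step, working throughout conditionally on the ground truth $\SIGMA^*$. Since $\SIGMA^*$ is a uniformly random $3$-colouring of $V_n$, the colour-class sizes $(n_1,n_2,n_3)$ are multinomially distributed with parameters $n$ and $(1/3,1/3,1/3)$, so that $n_s=n/3+O(\sqrt n\log n)$ for $s=1,2,3$ with high probability; condition on such a $\SIGMA^*$. Given $\SIGMA^*$ the edges of $\G^*$ are independent, and for fixed $d,\beta$ and $n$ large the truncation at $1$ in~\eqref{eqpvw} is vacuous. Hence, for a fixed vertex $v$ with $\SIGMA^*(v)=s$, the three numbers of neighbours of $v$ of planted colour $1,2,3$ are mutually independent (they involve disjoint sets of potential edges) and distributed as $\Bin(n_s-1,\din/n)$ for the colour $s$ and as $\Bin(n_r,\dout/n)$ for $r\neq s$.

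For the expectation, a binomial-to-Poisson approximation replaces these by independent Poisson variables of means $(n_s-1)\din/n=\din/3+O(\log n/\sqrt n)$ and $n_r\dout/n=\dout/3+O(\log n/\sqrt n)$; both the approximation error (of order $1/n$) and the fluctuation of the class sizes perturb each individual probability only by a relative factor $1+O(\log n/\sqrt n)$. Consequently, with $k=d_1+d_2+d_3$,
\[
\pr\brk{v\text{ has exactly }d_r\text{ planted-colour-}r\text{ neighbours in }\G^*\text{ for }r=1,2,3}=\eul^{-\din/3}\frac{(\din/3)^{d_s}}{d_s!}\prod_{r\neq s}\eul^{-\dout/3}\frac{(\dout/3)^{d_r}}{d_r!}\cdot\bc{1+O(\log n/\sqrt n)}.
\]
By~\eqref{eqdindout} we have $\din/3+2\dout/3=d$, so the leading factor equals $\pr\brk{\Po(d)=k}\binom{k}{d_1,d_2,d_3}(\din/(3d))^{d_s}(\dout/(3d))^{k-d_s}$; since $\din/(3d)=\eul^{-\beta}/(2+\eul^{-\beta})$ and $\dout/(3d)=1/(2+\eul^{-\beta})$, this is precisely the bracketed term of~\eqref{eqNsd1d2d3} divided by $n/3$. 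A vertex $v$ with $\SIGMA^*(v)=s$ lies in $\G^*_\Delta$ exactly when its $\G^*$-degree is at most $\Delta$, which is automatic on the range $k\leq\Delta$ of~\eqref{eqNsd1d2d3}. Multiplying the per-vertex probability by $n_s=n/3+O(\sqrt n\log n)$ gives $\Erw[\vN_{s,d_1,d_2,d_3}\mid\SIGMA^*]$ as in~\eqref{eqNsd1d2d3}, and summing over all triples with $k\leq\Delta$, using the multinomial identity $\sum_{d_1+d_2+d_3=k}\binom{k}{d_1,d_2,d_3}\eul^{-\beta d_s}=(2+\eul^{-\beta})^k$, yields $\Erw[\vN_s\mid\SIGMA^*]=(n/3)\pr\brk{\Po(d)\leq\Delta}+O(\sqrt n\log n)$, which is~\eqref{eqNs}.

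For concentration, write $\vN_{s,d_1,d_2,d_3}=\sum_{v\in V_n}\vecone_v$, where $\vecone_v$ indicates that $v$ has planted colour $s$ and $\G^*$-neighbour-profile $(d_1,d_2,d_3)$. For $v\neq w$ the indicators $\vecone_v$ and $\vecone_w$ are functions of disjoint families of potential edges apart from the single edge $\{v,w\}$; conditioning on the status of that edge makes them independent, and since the conditional mean of $\vecone_v$ moves by only $O(1)$ between the two values of that edge while the edge itself has variance $O(1/n)$, a Cauchy--Schwarz step yields $\mathrm{Cov}(\vecone_v,\vecone_w)=O(1/n)$. Hence $\Var[\vN_{s,d_1,d_2,d_3}\mid\SIGMA^*]\leq n+n^2\cdot O(1/n)=O(n)$, so Chebyshev's inequality gives $\vN_{s,d_1,d_2,d_3}=\Erw[\vN_{s,d_1,d_2,d_3}\mid\SIGMA^*]+O(\sqrt n\log n)$ with probability $1-O(1/\log^2 n)$. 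A union bound over $s\in\{1,2,3\}$ and over the $O(\Delta^3)$ triples with $k\leq\Delta$ (recall $\Delta$ is a fixed constant), together with the analogous estimate for $\vN_s$, completes the proof. Alternatively one may expose the edges one vertex at a time and apply Azuma's inequality on the high-probability event that every degree of $\G^*$ is $O(\log n)$.

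The argument is routine, with two points meriting care: (i) keeping the additive error at $O(\sqrt n\log n)$ as it is pushed through the Poisson approximation and the fluctuation of the class sizes; and (ii) the mild dependence among the vertex indicators, which rules out a naive Chernoff bound and forces the covariance estimate (or a martingale) used above. It is also worth recording that, consistently with~\eqref{eqNsd1d2d3}, $\vN_{s,d_1,d_2,d_3}$ counts the neighbours of $v$ \emph{in $\G^*$}, the constraint $k\leq\Delta$ encoding precisely that $v$ survives into $\G^*_\Delta$; the further thinning of $v$'s neighbourhood caused by the deletion of other high-degree vertices is accounted for separately, via $\phi$ from~\eqref{eqphi}, in the initial conditions~\eqref{eqODEinitial}.
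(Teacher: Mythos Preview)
Your proposal is correct and constitutes precisely the kind of argument the paper is gesturing at: the paper itself offers no proof beyond the sentence ``Routine calculations evince the following,'' and your first-moment computation via Poisson approximation followed by a covariance/Chebyshev concentration step is the standard way to carry out those routine calculations. Your closing remark on the interpretation of $\vN_{s,d_1,d_2,d_3}$ (degrees in $\G^*$ rather than in $\G^*_\Delta$, with the $\phi$-thinning deferred to the initial conditions~\eqref{eqODEinitial}) is also the reading that is consistent with the formula~\eqref{eqNsd1d2d3}.
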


Of course, we also need to colour the vertices of degree greater than $\Delta$.
Hence, let $V^\#$ contain all of vertices of degree greater than $\Delta$ as well as all vertices of degree less than or equal to $\Delta$ that have a neighbour of degree greater than $\Delta$.
Moreover, let $\G^\#$ be the subgraph of $\G^*$ with vertex set $V^\#$ in which two vertices $v,w\in V^\#$ are adjacent iff
\begin{itemize}
	\item $v,w$ are adjacent in $\G^*$, and
	\item at least one of $v,w$ has degree greater than $\Delta$.
\end{itemize}
In addition, let $V^\#_\Delta$ be the set of all vertices in $V^\#$ that have degree less than or equal to $\Delta$.
The following proposition summarises an \SBM-adaptation of the strategy of Achliopatas and Moore for dealing with high-degree vertices.

\begin{proposition}\label{prop_high}
	For any $d>0$ there exists $\Delta_0=\Delta_0(d)>0$ such that for all $\beta\geq0$ and $\Delta\geq\Delta_0$ \whp\ there exists $\SIGMA^\# : V^\#\to\{1,2,3\}$ with the following properties.
	\begin{enumerate}[(i)]
		\item For all but $o(n)$ edges $vw$ of $\G^\#$ we have $\SIGMA^\#(v)\neq\SIGMA^\#(w)$.
		\item For every vertex $v\in V^\#_\Delta$ there exists a colour $c^\#(v)\in\{1,2,3\}$ such that $\SIGMA^\#(w)=c^\#(v)$ for all $w\in\partial v\cap V^\#$.
		\item For any two colours $c,c'\in\{1,2,3\}$ we have
			\begin{align*}
				\abs{\cbc{v\in V^\#:\SIGMA^*(v)=c,\,\SIGMA^\#(v)=c'}}&=\frac{1+o(1)}9|V^\#|,\\
				\abs{\cbc{v\in V^\#_\Delta:\SIGMA^*(v)=c,\,c^\#(v)=c'}}&=\frac{1+o(1)}9|V^\#_\Delta|.
			\end{align*}
	\end{enumerate}
	Furthermore, $\SIGMA^\#$ is a proper 3-colouring of $\G^\#$ with probability $\Omega(1)$. 
\end{proposition}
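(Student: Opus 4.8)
\emph{Proof proposal.} The plan is to adapt Achlioptas and Moore's treatment of high-degree vertices~\cite{AchMoore3col}, with the extra twist that the \SBM\ requires: we build $\SIGMA^\#$ from \emph{independent external randomness} so that the colour it assigns is, for all but $o(n)$ vertices of $V^\#$, exactly uniform on $\{1,2,3\}$ and stochastically independent of $\SIGMA^*$; this is what will yield~(iii). Put $H=V^\#\setminus V^\#_\Delta$, so $H$ is the set of vertices of degree exceeding $\Delta$, every $\G^\#$-edge meets $H$, and $V^\#_\Delta$ spans no $\G^\#$-edge. Because the degree of any vertex of $\G^*$ is asymptotically $\Po(d)$-distributed irrespective of $\beta$ (one checks $\din/3+2\dout/3=d$ from~\eqref{eqdindout}), degree estimates of the same flavour as Fact~\ref{prop_Delta} give $\whp$ that the maximum degree is $O(\log n)$, that $|H|=\Theta\bc{n\Pr\brk{\Po(d)>\Delta}}$, and that $|V^\#|=\Theta(nd\phi)$ with $\phi$ as in~\eqref{eqphi}. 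Now form the auxiliary graph $\cH$ on $H$, joining $w,w'$ whenever they share a neighbour in $V^\#_\Delta$, and let $\cH^+$ add to $\cH$ all $\G^*$-edges internal to $H$. A Galton--Watson domination shows that the expected number of $H$-vertices reached from a fixed $w\in H$ in one $\cH^+$-step is $O(\Delta\phi)$; since $\phi=\Pr\brk{\Po(d)\ge\Delta}$ decays faster than any power of $\Delta$, this mean drops below $1$ once $\Delta\ge\Delta_0(d)$, so $\cH^+$ is subcritical: $\whp$ every component of $\cH^+$ is a tree or unicyclic on $O(\log n)$ vertices, and the expected number of components that contain a $\G^*$-edge with both endpoints in a single $\cH$-component is $O(1)$ and tends to $0$ as $\Delta\to\infty$, uniformly in $\beta$ (this event concerns only the unlabelled graph $\G^*$).

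Given this structure, I would build $\SIGMA^\#$ as follows. Call an $\cH^+$-component \emph{good} if no $\G^*$-edge joins two vertices of one $\cH$-component inside it; $\whp$ all but $O(\log n)$ vertices of $H$ lie in good components. For a good component, contracting its $\cH$-components to points and keeping the $\G^*$-edges produces a loopless minor of a tree or unicyclic graph — again a forest or unicyclic graph, hence $3$-colourable; pick one of its proper $3$-colourings uniformly at random, independently across good components, and let each $\cH$-component inherit the colour of its point. Colour the remaining $O(\log n)$ vertices of $H$ arbitrarily, constant on each $\cH$-component. Finally, for $v\in V^\#_\Delta$ let $C(v)$ be the $\cH$-component that contains \emph{all} of $v$'s neighbours in $H$ (they pairwise share $v$, so it exists), set $c^\#(v):=\SIGMA^\#(C(v))$, and choose $\SIGMA^\#(v)$ uniformly from $\{1,2,3\}\setminus\{c^\#(v)\}$, independently over $v$. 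Then (ii) is immediate, and every monochromatic $\G^\#$-edge is a $\G^*$-edge inside a non-good component, of which $\whp$ there are only $o(n)$ in total; all other $\G^\#$-edges are proper — edges between $V^\#_\Delta$ and $H$ because $\SIGMA^\#(v)\ne c^\#(v)$, and $\G^*$-edges between distinct $\cH$-components of a good component by the choice of colouring — which gives~(i).

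For~(iii) the point is that, in a good component, every vertex receives a $\SIGMA^\#$-colour that is \emph{exactly} uniform on $\{1,2,3\}$: a heavy vertex because a uniformly random proper $3$-colouring of a connected graph is vertexwise uniform ($\mathbb S_3$ acts freely on the proper colourings of a graph with an edge), and a light vertex because $c^\#(v)$ is uniform and $\SIGMA^\#(v)$ is uniform on the complementary pair; and this colour is a function of the external randomness alone, hence independent of $(\G^*,\SIGMA^*)$. Colour symmetry of the \SBM\ gives $\whp$ that $|\{v\in V^\#:\SIGMA^*(v)=c\}|=(1+o(1))|V^\#|/3$, and likewise over $V^\#_\Delta$. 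Hence, conditioning on $(\G^*,\SIGMA^*)$ together with the $\cH^+$-structure, the conditional mean of $|\{v\in V^\#:\SIGMA^*(v)=c,\SIGMA^\#(v)=c'\}|$ is $\tfrac13|\{v\in V^\#:\SIGMA^*(v)=c\}|+O(\Delta\log n)=(1+o(1))|V^\#|/9$ $\whp$, and the same with $c^\#$ over $V^\#_\Delta$; Chebyshev then yields concentration, since the colour choices are independent over $\cH^+$-components, a component $K$ touches at most $\sum_{w\in K\cap H}(1+\deg w)$ vertices of $V^\#$, and $\sum_{w\in H}(1+\deg w)^2=O(n\Delta^2\phi)=o\bc{|V^\#|^2}$. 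Lastly, $\SIGMA^\#$ is a proper $3$-colouring of $\G^\#$ exactly when every $\cH^+$-component is good and its $\cH$-contraction is $3$-colourable; the second holds $\whp$ (subcriticality precludes a component with two independent cycles), the first with probability at least $\tfrac12$ for every $\Delta\ge\Delta_0$ and every $\beta$ once $\Delta_0$ is chosen large enough (the expected number of non-good components $\to 0$), and intersecting with the previous $\whp$-events gives properness with probability $\Omega(1)$. I expect~(iii) to be the delicate step: the randomisation must act at the level of individual $\cH$-components — a single global colour permutation of a hand-built colouring would leave $\SIGMA^\#$ correlated with $\SIGMA^*$ — and one must still control the fluctuations, the subtlety being that the $\Theta(\Delta)$ light vertices hanging off one heavy cluster all inherit the same forbidden colour, so the variance bound has to exploit that there are $\Theta(n\phi)$ roughly independent clusters of bounded influence each.
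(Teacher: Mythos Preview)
Your construction is correct, but it is considerably more elaborate than what the paper does. The paper works with $\G^\#$ directly rather than with your auxiliary graph $\cH^+$ on the heavy vertices: Lemma~\ref{lem_high_subcrit} shows via a two-type (big/small) branching process that $\G^\#$ itself is subcritical, so \whp\ each component is a tree of size $O(\log n)$, only $O(1)$ components are cyclic in expectation, and $\G^\#$ is acyclic with probability $\Omega(1)$. Then for each tree component one picks two random colours from $\{1,2,3\}$ and uses the unique proper $2$-colouring of the tree. This yields~(ii) for free, because in a $2$-colouring of a tree \emph{all} neighbours of any vertex receive the opposite colour --- there is no need to contract anything or to introduce the notion of a ``good'' component. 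Item~(iii) follows exactly as in your argument (each vertex is vertexwise uniform and independent of $\SIGMA^*$, concentration by independence across components), item~(i) from the $O(\log^2 n)$ vertices in cyclic components, and properness with probability $\Omega(1)$ from acyclicity with probability $\Omega(1)$. Your route via $\cH^+$, contraction, and random $3$-colouring of the quotient works too, but it reinvents the bipartite structure that the direct $2$-colouring exploits automatically. One small slip: your Galton--Watson mean for $\cH^+$ should be $O(\Delta^2\phi)$ rather than $O(\Delta\phi)$ (the size-biased degree of a heavy vertex is itself $\Theta(\Delta)$, and each of its light neighbours contributes up to $\Delta-1$ further candidates), though of course this still vanishes as $\Delta\to\infty$.
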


Thus, we can assign colours to the high-degree vertices and their neighbours in such a way that only $o(n)$ monochromatic edges ensue.
Furthermore, all neighbours of a vertex of degree at most $\Delta$ have the same colour.
Additionally, the colours that $\SIGMA^\#$ assigns to high degree vertices, as well as the `forbidden' colours $c^\#(v)$ of low degree vertices, are uncorrelated with the planted colours.
The proof of \Prop~\ref{prop_high} can be found in \Sec~\ref{sec_prop_high}.

\subsection{The principle of deferred decisions}\label{sec_deferred}
We now proceed to run $\algo$ on $\G^*_\Delta$. % with the initial set of lists $\cL(v)=\{1,2,3\}$ for all $v$.
%As alluded to in \Sec~\ref{sec_AM}, to break the initial symmetry between colours we remove a random colour from the lists of a few random vertices.
%Thus, for a small parameter $\xi>0$ obtain the initial lists $(\cL_v)_{v\in V(\G^*_\Delta)}$ by choosing a set of $\xi n$ vertices uniformly at random and assigning to each of them independently a list $\cL_v$ of size two. \aco{is this in line with the initial condition?} \os{we don't think so, since in the initial condition we specifically take a colour from the nodes that are adjacent to large nodes}
The initial lists of colours are determined by the colouring of the high degree vertices.
Thus, in the notation of \Prop~\ref{prop_high}, for all $v\in V^\#_\Delta$ we set the initial list to $\cL_v(0)=\{1,2,3\}\setminus\{c^\#(v)\}$.
For all other vertices $u$ of degree at most $\Delta$ we let $\cL_v(0)=\{1,2,3\}$.

Given the input $\G^*_\Delta,(\cL_v(0))_{v\in V(\G^*_\Delta)}$, how will the execution of $\algo$ unfold?
Routine arguments show that $\G^*_\Delta$ consists of one `giant' connected component that contains about 98\%\ of all vertices, and a linear number $\Omega(n)$ of smaller components of maximum size $O(\log n)$.
In fact, most of the small components contain only a bounded number of vertices, and all but a bounded number of components are acyclic \whp\
In effect, since the initial lists contain $\Omega(n)$ vertices with list size two \whp, $\algo$ will \whp\ encounter the giant component within the first $O(1)$ executions of its main loop.
%Since Step~9 is executed at most once for every connected component of $\G^*_\Delta$, $\algo$ will not execute this step again before all vertices in the giant component (and quite a few outside) have been coloured.
As the algorithm proceeds to colour the giant of $\G^*_\Delta$, the algorithm removes vertices of this component one by one.
As a consequence, the giant component will ultimately fall apart and decompose into a number of mostly acyclic pieces of bounded size.
Our strategy is to model the execution of $\algo$ via differential equations until this decomposition occurs; for once the giant component shatters into pieces, the remaining components will be easy to colour.
Specifically, we are going to show that up until the dissolution of the giant component few bad vertices (whose colour list runs empty) occur \whp\

To carry this strategy out let us call an {\em epoch} of the algorithm the steps performed following an execution of Step~3 until just before the next execution of Step~3, or until the termination of the algorithm.
Thus, an epoch consists of assigning a colour to a vertex $v$ with $|\cL_v|=2$, i.e., with two available colours, and the subsequent `forced' colourings of vertices with only one available colour, until none are left.
Furthermore, by the $t$-th epoch we mean the epoch that starts with the $t$-th execution of Step~3.
Due to the aforementioned component structure of $\G^*$, \whp\ by the time $\algo$ stops we will thus be left with a few small and (mostly) acyclic uncoloured components where all vertices still have all three colours available.
This remainder is easy to colour.

Let $V(t)$ be the set of vertices of $\G^*_\Delta$ that have not yet been coloured at the end of the $t$-th epoch.
Moreover, let $\G^*_{\Delta}(t)$ be the graph that remains at the end of the $t$-th epoch.
Similarly, for a vertex $v\in V(t)$ let $\cL_v(t)$ be the colour list of $v$ at the end of epoch $t$.
Further, for $v\in V(t)$ let $\vd_{v,s}(t)$ be the number of neighbours $w$ of $v$ in $\G^*_{\Delta}(t)$ with $\SIGMA^*(w)=s$.
Finally, let $\fD(t)$ be the $\sigma$-algebra generated by $(\vd_{v,s}(t))_{v\in V(t),s\in\{1,2,3\}}$, $(\cL_v(t))_{v\in V(t)}$ and $\SIGMA^*$.
The following proposition provides that $\G^*_{\Delta}(t)$ is uniformly random given $\fD(t)$.
%Though easy to check, this fact is the linchpin of the entire proof because it will enable us to bring the method of differential equations to bear.

\begin{fact}\label{prop_deferred}
	Given $\fD(t)$, $\G^*_{\Delta}(t)$ is uniformly distributed on the set of all graphs $G$ on $V(t)$ such that every $v\in V(t)$ has precisely $\vd_{v,s}(t)$ neighbours in the set $V_s(t)=\{w\in V(t):\SIGMA^*(w)=s\}$.
\end{fact}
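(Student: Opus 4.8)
\emph{Overall approach.} The plan is to prove the statement by induction on $t$, the base case $t=0$ being $\G^*_{\Delta}(0)=\G^*_\Delta$ itself. Two elementary observations carry everything. First: for \emph{any} three vertex classes and \emph{any} edge probabilities $0<p_{\mathrm{in}},p_{\mathrm{out}}<1$, the random graph in which each potential edge is included independently is, conditionally on the vector of colour-degrees $(\vd_{v,s})_{v,s}$ of every vertex, \emph{uniform} over the finite set of simple graphs realising those colour-degrees. This is because the probability of any such graph equals $p_{\mathrm{in}}^{e_{\mathrm{in}}}(1-p_{\mathrm{in}})^{\binom{|V_1|}{2}+\cdots-e_{\mathrm{in}}}p_{\mathrm{out}}^{e_{\mathrm{out}}}(1-p_{\mathrm{out}})^{\cdots-e_{\mathrm{out}}}$, and both $e_{\mathrm{in}}$ and $e_{\mathrm{out}}$ are determined by the colour-degrees. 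Second, a ``tower'' observation: if $X$ is, conditionally on a $\sigma$-algebra $\fF$, uniform on a random set $S$ that is measurable with respect to a sub-$\sigma$-algebra $\fF_0\subseteq\fF$, then $X$ is also uniform on $S$ conditionally on $\fF_0$; indeed, averaging the conditional law of $X$ over $\fF$ given $\fF_0$ and using $\fF_0$-measurability of $S$ gives $\pr[X=H\mid\fF_0]=|S|^{-1}\vecone\{H\in S\}$. These let us reveal information about $\G^*_\Delta$ piecemeal while retaining conditional uniformity subject to the currently known colour-degrees, and then discard the fine details.

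\emph{The inductive step.} Suppose $\G^*_{\Delta}(t)$ is uniform given $\fD(t)$ on the set of graphs on $V(t)$ with colour-degrees $(\vd_{v,s}(t))$. The key point is that every decision $\algo$ makes during the $(t{+}1)$-st epoch is a function of $\fD(t)$, of already-revealed edges, and of \emph{independent} fair coins, but not of the still-unrevealed edges of $\G^*_{\Delta}(t)$: the Step~3 selection probability $p_{G,\alpha}(v)$ depends on $v$ only through $|\partial v|=\sum_s\vd_{v,s}(t)$, which is $\fD(t)$-measurable; the colour assigned to the Step~3 vertex is a uniform element of its two-element list; and each ensuing forced colouring, and the choice of which forced vertex to process next, is a deterministic-or-uniform function of the lists, which change only when an edge is revealed. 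One then processes the epoch one coloured vertex at a time. The basic move is a one-vertex deferred-decisions lemma: if $G$ is uniform on graphs on $V$ with given colour-degrees and $v\in V$ is chosen independently of $G$, then conditionally on $v$ and on $\partial_G v$ the graph $G-v$ is uniform on the graphs on $V\setminus\{v\}$ whose colour-degrees are the old ones decremented along $\partial_G v$ according to $\SIGMA^*(v)$ --- immediate, since each such target graph extends to exactly one $G$ with the prescribed neighbourhood of $v$. Iterating this over the coloured vertices of the epoch shows that, conditionally on $\fD(t)$ and on the \emph{entire transcript} of the epoch (all colours assigned and all neighbourhoods of coloured vertices), $\G^*_{\Delta}(t+1)$ is uniform on the graphs with colour-degrees $(\vd_{v,s}(t+1))$. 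Since the transcript determines $(\vd_{v,s}(t+1))$, the updated lists $(\cL_v(t+1))$ and $\SIGMA^*$, it refines $\fD(t+1)$; as the target set of graphs is $\fD(t+1)$-measurable, the tower observation gives the claim for $t+1$.

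\emph{The base case and the main obstacle.} For $t=0$ one must untangle the high-degree preprocessing. Condition on the set $D$ of vertices of $\G^*$ of degree $>\Delta$ together with \emph{all} edges of $\G^*$ incident with $D$; since every edge touching a high-degree vertex has a high-degree endpoint, this conditioning captures every edge incident with $D$ (in particular all edges within $D$), so membership ``$v\in D$'' is already decided. Given this data, $\G^*_\Delta=\G^*-D$ is exactly the independent-edge random graph on $V_n\setminus D$ conditioned on each $v\notin D$ having $\G^*_\Delta$-degree at most $\Delta$ minus its number of revealed edges to $D$. Conditioning further on the colour-degrees $(\vd_{v,s}(0))$ makes those caps automatic, so by the first observation $\G^*_\Delta$ is uniform on graphs with colour-degrees $(\vd_{v,s}(0))$; moreover the initial lists $(\cL_v(0))$, being built from $\SIGMA^\#$ and hence from $\G^\#$ and the independent randomness of Proposition~\ref{prop_high}, are functions of the conditioning data and independent coins, so the full conditioning refines $\fD(0)$ and the tower observation applies once more. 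I expect the main obstacle to be precisely this bookkeeping around the cutoff: checking that ``high degree'' is self-contained on the $\G^\#$-side, that the surviving colour-degrees are consistent with the induced degree caps, and that the randomness used to build $\SIGMA^\#$ does not secretly correlate with the edges of $\G^*_\Delta$. The inductive step itself is the textbook principle of deferred decisions and should go through routinely once the $\fD(t)$-measurability of $\algo$'s choices is recorded.
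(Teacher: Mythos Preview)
Your argument is correct and follows the same route the paper takes---the principle of deferred decisions---but you have written out rigorously what the paper compresses into three sentences. In particular, your careful treatment of the base case (conditioning on the high-degree set $D$ together with all edges incident to it, so that $\G^\#$, $\SIGMA^\#$, and hence the initial lists become measurable with respect to the conditioning while the edges inside $\G^*_\Delta$ remain an independent-edge model subject only to degree caps) addresses a genuine subtlety that the paper simply elides; the paper's proof does not mention the preprocessing at all and treats the whole fact as folklore.
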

\begin{proof}
	This is an instance of the `principle of deferred decisions'.
	Specifically, the decisions of $\algo$ (which vertex to colour next and what colour it receives) depend only on the degrees of the vertices and their colour lists.
	Furthermore, once a vertex receives a colour, it gets deleted from the graph.
	Therefore, the only information that gets carried over is implicit in the degrees and the colour lists.
\end{proof}

\subsection{Expectation management}\label{sec_condex}
We now define a number of random variables that, in light of Fact~\ref{prop_deferred}, suffice to track the execution of $\algo$.
Subsequently we will see that the evolution of these random variables in $t$ can be described in terms of a system of ordinary differential equations.

Let $\cT_\Delta=\{(s,d_1,d_2,d_3):s\in\{1,2,3\},d_1,d_2,d_3\geq0,\,d_1+d_2+d_3\leq\Delta\}$.
Thinking of $s$ as the planted colour $\SIGMA^*(v)$ and of $d_j$ as the number of neighbours $w$ with planted colour $\SIGMA^*(w)=j$, we refer to the elements $(s,d_1,d_2,d_3)\in\cT_\Delta$ as {\em vertex types}.
For $\theta=(s,d_1,d_2,d_3)\in\cT_\Delta$ with $d_1+d_2+d_3<\Delta$ and $\chi\in\{1,2,3\}$ let 
\begin{align*}
	\theta^{+\chi}&=(s,d_1+\vecone\{\chi=1\},d_2+\vecone\{\chi=2\},d_3+\vecone\{\chi=3\}).
\end{align*}
In words, this is the type obtained from $\theta$ by allowing for one more neighbour $w$ with $\SIGMA^*(w)=\chi$.

For a list $\cL\subset\{1,2,3\}$ of size $|\cL|\geq2$ and a type $\theta=(s,d_1,d_2,d_3)\in\cT_\Delta$ let $\vV_{\cL,\theta}(t)$ be the number of vertices $v\in V(t)$ such that $\SIGMA^*(v)=s$, $\cL_v(t)=\cL$ and $\vd_{v,\chi}(t)=d_\chi$ for $\chi=1,2,3$.
Further, let
\begin{align*}
	\vW_\theta(t)&=\vV_{\{1,2,3\},\theta}(t)
\end{align*}
be the number of `white' vertices of type $\theta$ that still have all three colours available at time $t$.
Additionally, let
\begin{align*}
	\vU_\theta(t)&=\sum_{\cL\subset\{1,2,3\}:|\cL|=2}\vV_{\cL,\theta}(t)
\end{align*}
be the total number of vertices of type $\theta$ with two available colours.
Finally, for a colour $s\in\{1,2,3\}$ let
\begin{align*}
	\vU_{s,\theta}(t)&=\vV_{\{1,2,3\}\setminus\{s\},\theta}(t)
\end{align*}
be the number of vertices of type $\theta$ that do {\em not} have colour $s$ available.
For notational convenience, for any tuple $(s,d_1,d_2,d_3)$ with $d_1+d_2+d_3>\Delta$ we define $\vV_{\cL,\theta}(t)=\vW_\theta(t)=\vU_\theta(t)=\vU_{s,\theta}(t)=0$.

We proceed to derive a closed system of equations for the conditional expectations of the random variables $\vW_\theta(t+1),\vU_{s,\theta}(t+1)$ given $\fD_t$.
To this end, for $\theta=(s,d_1,d_2,d_3)$, $\theta'=(s',d_1',d_2',d_3')$ and $c,c'\in\{1,2,3\}$ and $c''\in\{1,2,3\}\setminus\{c,c'\}$ define 
%\aco{semantics: $k(c, \cin, \cout)$, $k(\cin, \cout)$}
\begin{align}
	%	\theta =& (c*, d_r, d_b, d_g)\\
	\vp_{c, \theta}(t) = & \frac{(d_1+d_2+d_3)^\alpha\sum_{\chi\in\{1,2,3\}\setminus\{c\}} \vU_{\chi,\theta}(t)}{2\sum_{\theta'\in\cT_{\Delta}}(d_1'+d_2'+d_3')^\alpha\vU_{\theta'}(t)}\label{eqpctheta},\\
	\vE_{s, s'}(t) =& \sum_{v\in V(t):\SIGMA^*(v)=s} \vd_{c'}(v) = \sum_{v\in V(t):\SIGMA^*(v)=s'}\vd_s(v)\label{eqEss'},\\
	\vM_{c', \theta', c, \theta}(t) =& \vecone\{c\neq c'\}\bc{d_{s'} (d_{s}'+1) \vU_{c'',{\theta'}^{+s}}(t)}/\vE_{s,s'}(t)\label{eqMmatrix},\\  
	\vec\kappa_{c, s, s'}(t) = & \sum_{d_1,d_2,d_3=0}^\Delta \vecone\{(s,d_1,d_2,d_3)\in\cT_\Delta\}d_{s'}\left(\left(\id -\vM(t) \right)^{-1}\vec p(t)\right)_{(c, (s, d_1 , d_2, d_3))} \label{eqvkappacss'},\\
	\vec\kappa_{s, s'}(t) = & \sum_c \vec\kappa_{c, s,s'}(t).\label{eqkappass'}
\end{align}
We think of $\vM(t)=(\vM_{c',\theta',c,\theta}(t))_{(c',\theta),(c,\theta)\in\{1,2,3\}\times\cT_\Delta}$ as a $(3|\cT_\Delta|)\times(3|\cT_\Delta)$-matrix and of $\vp(t)=(\vp_{c,\theta}(t))_{c,\theta}$ as a row vector of dimension $3|\cT_\Delta|$.
Furthermore, in~\eqref{eqvkappacss'} above $\id$ denotes the $(3|\cT_\Delta|)\times(3|\cT_\Delta)$-identity matrix (with ones on the diagonal and zeros elsewhere).
Let $\|\vM(t)\|$ denote the spectral norm of $\vM(t)$.
The following proposition summarises the main combinatorial step towards the proof of \Thm~\ref{thm_main}.

\begin{proposition}\label{prop_condex}
	Let $\eps>0$ be independent of $n$.
	If 
	\begin{align}\label{eq_cond_prop_condex}
		\|\vM(t)\|&<1-\eps,&\min_{\chi,\theta}\vU_{\chi,\theta}(t)&\geq\eps n,&&\mbox{ and }&
		\min_{\chi,\theta}\vW_\theta(t)&\geq\eps n,
	\end{align}
 then for all $\theta=(s,d_1,d_2,d_3)\in\cT_\Delta$ we have
	\begin{align}\label{eq_prop_condex1}
		\ex\brk{\vW_\theta(t+1)-\vW_\theta(t)\mid\fD_t}=&- \sum_{s'=1}^3  \frac{d_{s'} \vec\kappa_{s',s}(t)\vW_\theta(t)}{\vE_{s,s'}(t)}+o(1),\\
		\ex\brk{\vU_{c,\theta}(t+1)-\vU_{c,\theta}(t)\mid\fD_t}
													   =& - \frac{(d_1+d_2+d_3)^\alpha \vU_{c,\theta}(t)}{\sum_{\theta'\in\cT_\Delta}(d_1'+d_2'+d_3')^\alpha \vU_{\theta'}(t)}\nonumber\\
													   &+
												   \sum_{s'=1}^3 \frac{\vec\kappa_{c, s',s}}{\vE_{s,s'}(t)}\brk{(d_{s'} + 1)(\vW_{\theta^{+s'}}(t) +\vU_{c,\theta^{+s'}} (t)) -d_{s'}\vU_{c,\theta}(t) }.\label{eq_prop_condex2}
	\end{align}
	Furthermore, the expected number of bad vertices is $O(1)$.
\end{proposition}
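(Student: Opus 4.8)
The plan is to condition on $\fD_t$ and, invoking Fact~\ref{prop_deferred}, treat $\G^*_\Delta(t)$ as a uniformly random graph with the prescribed bipartite-type degree sequence. In a single epoch the algorithm colours one ``seed'' vertex $v$ with $|\cL_v|=2$, chosen with probability $p_{\G^*_\Delta(t),\alpha}(v)$, and then performs a cascade of forced colourings. I would first analyse the \emph{seed step}: the probability that the seed has type $\theta$ and colour list $\cL$ is, up to $o(1)$, proportional to $(d_1+d_2+d_3)^\alpha\vV_{\cL,\theta}(t)$ with normalisation $\sum_{\theta'}(d_1'+d_2'+d_3')^\alpha\vU_{\theta'}(t)$; colouring it removes one vertex of type $\theta$ and, for each neighbour $w$, deletes the assigned colour $c$ from $\cL_w$. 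The key point is that the forced cascade initiated at $v$ explores a \emph{branching-process-like} neighbourhood: a neighbour $w$ with $\cL_w$ of size $2$ becomes forced exactly when the deleted colour was one of its two, and then $w$ in turn propagates to \emph{its} neighbours. Because the type degree sequence is sparse and the graph is locally tree-like w.h.p., this exploration is dominated by a multitype Galton--Watson process whose offspring mean matrix is exactly $\vM(t)$ from~\eqref{eqMmatrix}: the entry $\vM_{c',\theta',c,\theta}(t)$ records the expected number of further vertices of colour-deficiency $c$ and type $\theta$ that a forced vertex of deficiency $c'$, type $\theta'$ produces, with the combinatorial factors $d_{s'}(d_s'+1)$ counting half-edges and the normalisation $\vE_{s,s'}(t)$ coming from the configuration-model pairing.

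Next I would compute the expected change in each tracked variable over one epoch by summing the contribution of the seed step and the expected total progeny of the cascade. The expected number of forced vertices of each (deficiency, type) pair produced from a seed of colour $c$, planted colour $s'$ is captured by the resolvent $((\id-\vM(t))^{-1}\vp(t))_{(c,(s',\cdot))}$; this is where hypothesis $\|\vM(t)\|<1-\eps$ is essential, since it guarantees the Neumann series $\sum_{k\geq0}\vM(t)^k$ converges, the cascade is subcritical, and hence has expected size $O(1)$ (so a single epoch changes the macroscopic counts by $O(1)$, not $\omega(1)$). Contracting the resolvent against the degree weight $d_{s'}$ and the edge count $\vE_{s,s'}(t)$ produces precisely $\vec\kappa_{c,s',s}(t)$ and $\vec\kappa_{s',s}(t)$ as defined in~\eqref{eqvkappacss'}--\eqref{eqkappass'}. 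A vertex of type $\theta$ loses a ``white'' status either because it is itself coloured during the cascade or because all of its colours get stripped; bookkeeping the incoming and outgoing transitions $\theta\leftrightarrow\theta^{+s'}$ (a white or two-colour vertex of type $\theta^{+s'}$ whose extra $\SIGMA^*=s'$ neighbour is the just-coloured vertex degrades to type $\theta$) yields~\eqref{eq_prop_condex1} and~\eqref{eq_prop_condex2}. The two density conditions $\min_{\chi,\theta}\vU_{\chi,\theta}(t)\geq\eps n$ and $\min_\theta\vW_\theta(t)\geq\eps n$ ensure the relevant pools are large enough that the one-epoch changes concentrate and the ``sampling-without-replacement'' corrections are absorbed into the $o(1)$.

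For the bad-vertex count, I would observe that a vertex becomes bad only if \emph{both} of its colours (for a two-colour vertex) or all three (for a white vertex) are deleted, which requires it to receive at least two distinct ``strike'' colours from distinct coloured neighbours. In the tree-like regime with a subcritical cascade, the expected number of vertices receiving two independent strikes within a single epoch is $O(1)$ per epoch only in aggregate over all $\Theta(n)$ epochs; more precisely, one bounds the probability that a given vertex $u$ lies in two different cascades, or is reached twice in one cascade, which forces a short cycle through $u$, and the expected number of vertices on short cycles in the sparse random graph is $O(1)$ w.h.p.\ (this is the usual ``excess'' bound for configuration models). Summing this $O(1)$ bound gives the last sentence of the proposition.

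\medskip
\noindent\textbf{Main obstacle.} The delicate part is making the branching-process heuristic for the forced cascade \emph{rigorous as an exact expectation identity} rather than merely an asymptotic bound: one must justify that truncating the Neumann series, ignoring the depletion of type-classes \emph{during} a single epoch, and treating successive forced vertices as exploring fresh configuration-model half-edges all introduce only $o(1)$ errors in $\ex[\,\cdot\mid\fD_t]$, uniformly over the (exponentially many) types $\theta$. This requires a careful coupling of the true epoch-exploration with the idealised multitype Galton--Watson process, controlled via $\|\vM(t)\|<1-\eps$, together with a concentration argument (e.g.\ bounded-differences or a martingale estimate on the cascade size) to replace random cascade sizes by their conditional means. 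Verifying that the matrix $\vM(t)$ produced by this honest accounting matches~\eqref{eqMmatrix} on the nose — including the precise combinatorial prefactors — is where the bulk of the work lies.
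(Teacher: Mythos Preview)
Your proposal is correct and follows essentially the same route as the paper: couple the forced cascade with a sub-critical multitype branching process with mean matrix $\vM(t)$, use the Neumann series $(\id-\vM(t))^{-1}\vp(t)$ to compute expected progeny, and then do the bookkeeping of transitions $\theta^{+s'}\to\theta$ to obtain~\eqref{eq_prop_condex1}--\eqref{eq_prop_condex2}; bad vertices are handled by the cycle argument you describe. The only presentational difference is that the paper isolates the branching-process calculation into an explicit lemma about the random variable $\vK_{c,s,s'}(t)$, the number of edges leaving the cascade tree $\fT_t$ from vertices of assigned colour $c$ and planted colour $s$ toward vertices of planted colour $s'$, and proves $\ex[\vK_{c,s,s'}\mid\fD(t)]=\vec\kappa_{c,s,s'}(t)+o(1)$; the expected changes in $\vW_\theta$ and $\vU_{c,\theta}$ then follow in a few lines by distributing these outgoing edges uniformly (via Fact~\ref{prop_deferred}) among the candidate neighbours, which is exactly the ``contracting against $d_{s'}/\vE_{s,s'}(t)$'' step you outline.
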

The proof of \Prop~\ref{prop_condex} can be found in Section\ref{sec_prop_condex}.

\subsection{Tracking $\algo$ via differential equations}\label{sec_diffeq}
We harness Wormald's method of differential equations~\cite{Wormald} to reduce the analysis of the combinatorial equations~\eqref{eq_prop_condex1}--\eqref{eq_prop_condex2} to a system of ODEs.
Specifically, let
$$
(w_\theta=w_\theta(t),u_{c,\theta}=u_{c,\theta}(t))_{c\in\{1,2,3\},\theta\in\cT_\Delta}
$$
be functions of $t\geq0$.
Define the following auxiliary functions $e_{s,s'}$, $\kappa_{c,s,s'}$, $\kappa_{s,s'}$, $M_{c',\theta',c,\theta}$ and $p_{c,\theta}$ for $c,c'\in\{1,2,3\}$ and $\theta=(s,d_1,d_2,d_3),\theta'=(s',d_1',d_2',d_3')\in\cT_\Delta$:
\begin{align}
	p_{c, \theta}  &= \frac{(d_1+d_2+d_3)^\alpha\sum_{\chi\in\{1,2,3\}\setminus\{c\}} u_{\chi,\theta}}{2\sum_{c'=1}^3\sum_{\theta'\in\cT_\Delta}(d_1'+d_2'+d_3')^\alpha u_{c',\theta'}}\label{eqpctheta},\\
	e_{s,s'}&=\sum_{d_1',d_2',d_3'=0}^{\Delta}\vecone\{\theta'\in\cT_\Delta\}d_s'\brk{w_{\theta'}+\sum_{c'=1}^3u_{c,\theta'}},\label{eqess'}\\
	M_{c', \theta', c, \theta} &= \vecone\{c\neq c'\}{d_{s'} (d_{s}'+1) u_{c'',{\theta'}^{+s}}}/{e_{s,s'}},\label{eqMmatrix}\\
	\kappa_{c,s,s'}&=\sum_{d_1,d_2,d_3=0}^\Delta \vecone\{\theta\in\cT_\Delta\} d_{s'}\left(\left(\id -M \right)^{-1} p\right)_{(c, \theta)},\label{eqkappacss'}\\
	\kappa_{s, s'}  &= \sum_{c=1}^3 \kappa_{c, s,s'},\label{eqkappass'}
\end{align}
where once again $M=(M_{c',\theta',c,\theta})_{(c',\theta'),(c,\theta)}$ is a $(3|\cT_\Delta|)\times(3|\cT_\Delta|)$-matrix and $p=(p_{c,\theta})_{c,\theta}$ is a row vector of size $3|\cT_\Delta|$.
Recalling the functions $u_\ell,w_\ell$ from~\eqref{eqODEinitial}--\eqref{eqODE}, consider the system of first-order ODEs
\begin{align}\label{eqODEcomplicated1}
	\frac{\dd w_\theta}{\dd t}&=-\sum_{s'=1}^3\frac{d_{s'}\kappa_{s,s'}w_\theta}{e_{s,s'}},\\
	\frac{\dd u_{c,\theta}}{\dd t}&=-\frac{(d_1+d_2+d_3)^\alpha u_{c,\theta}(t)}{\sum_{c'=1}^3\sum_{\theta'\in\cT_\Delta}(d_1+d_2+d_3)^\alpha u_{c',\theta'}}+\sum_{s'=1}^3\frac{\kappa_{c,s',s}}{e_{s,s'}}\brk{(d_{s'}+1)(w_{\theta^{+s'}}+u_{c,\theta^{+s'}})-d_{s'}u_{c,\theta}}\label{eqODEcomplicated2}
\end{align}
with initial conditions
\begin{align}\label{eqODEinitial_full}
	u_{c, \theta}(0) =& \frac 1 9 u_{d_1 + d_2 + d_3}(0)\binom{d_1 +d_2 + d_3}{d_1, d_2, d_3} \prod_{s'=1}^3 \bc{\frac{\exp(-\beta\vecone\cbc{s = s'})}{2+\exp(-\beta)}}^{d_{s'}},\\
	w_{\theta}(0) =& \frac 1 3 w_{d_1 + d_2 + d_3}(0)\binom{d_1 + d_2 + d_3}{d_1, d_2, d_3} \prod_{s'=1}^3 \bc{\frac{\exp(-\beta\vecone\cbc{s=s'})}{2+\exp(-\beta)}}^{d_{s'}}.\label{eqODEinitial_full2}
\end{align}

\begin{fact}\label{prop_diffeq}
	Assume that there exists $\eps>0$ such that the system of differential equations~\eqref{eqODEcomplicated1}--\eqref{eqODEcomplicated2} has a solution on an interval $[0,t^*+\eps]$ such that
	\begin{align}\label{eq_prop_diffeq}
		\inf_{t\in [0,t^*+\eps]}\min\{u_{c,\theta},w_\theta:c\in\{1,2,3\},\,\theta\in\cT_\Delta\}&>\eps&&\mbox{and}&
		\sup_{t\in [0,t^*+\eps]}\|M\|&<1-\eps.
	\end{align}
	Then \whp\ for all $t\leq t^*n$, all $\theta\in\cT_\Delta$ and all $c\in\{1,2,3\}$ we have
	\begin{align*}
		\vW_\theta(t)&=nw_\theta(t)+o(n),&\vU_{c,\theta}(t)&=n u_{c,\theta}(t)+o(n).
	\end{align*}
\end{fact}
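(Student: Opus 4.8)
To establish Fact~\ref{prop_diffeq} the plan is to run Wormald's differential equation method~\cite{Wormald}. As $\Delta$ is fixed, $\cT_\Delta$ is a finite set, so $X(t)=(\vW_\theta(t),\vU_{c,\theta}(t))_{c\in\{1,2,3\},\,\theta\in\cT_\Delta}$ is a fixed-dimensional vector of $[0,n]$-valued random variables that evolves over the epochs $t=0,1,2,\dots$, and by Fact~\ref{prop_Delta} and Proposition~\ref{prop_high} we start from $X(0)=nx(0)+o(n)$ \whp, where $x=(w_\theta,u_{c,\theta})$ is the ODE solution furnished by the hypothesis. I would work on the domain
\[
	\cD_\eps=\bigl\{(w_\theta,u_{c,\theta})_{c,\theta}:\ w_\theta,u_{c,\theta}\geq\eps/2\ \text{for all }c,\theta,\ \ \|M\|\leq1-\eps/2\bigr\},
\]
verify on $\cD_\eps$ the three standard hypotheses of the method --- trend, Lipschitz and boundedness --- and then invoke Wormald's theorem to obtain that \whp, for all $t\leq t^*n$ simultaneously, $\vW_\theta(t)=nw_\theta(t/n)+o(n)$ and $\vU_{c,\theta}(t)=nu_{c,\theta}(t/n)+o(n)$; here $\tau=t/n$ is the usual time rescaling under which \eqref{eqODEcomplicated1}--\eqref{eqODEcomplicated2} is the fluid limit of the one-step drifts supplied by Proposition~\ref{prop_condex}.

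The trend hypothesis is precisely Proposition~\ref{prop_condex}: whenever $X(t)\in n\cD_\eps$, the conditions \eqref{eq_cond_prop_condex} hold with $\eps$ replaced by $\eps/2$, and \eqref{eq_prop_condex1}--\eqref{eq_prop_condex2} then identify $\Erw\brk{X(t+1)-X(t)\mid\fD_t}$, up to $o(1)$, with the right-hand side of \eqref{eqODEcomplicated1}--\eqref{eqODEcomplicated2} evaluated at $X(t)/n$. The Lipschitz hypothesis is routine on $\cD_\eps$: there all denominators appearing in $p$, $M$ and the right-hand sides of \eqref{eqODEcomplicated1}--\eqref{eqODEcomplicated2} are bounded away from $0$ (using positivity of the coordinates and that some degree is positive), so $p$ and $M$ are smooth and bounded, while $\|M\|\leq1-\eps/2$ makes $(\id-M)^{-1}=\sum_{k\geq0}M^k$ converge with $\|(\id-M)^{-1}\|\leq2/\eps$ and depend smoothly on $M$; hence $\kappa_{c,s,s'}$ and those right-hand sides are rational functions with non-vanishing denominators, thus Lipschitz, on $\cD_\eps$ (existence and uniqueness of the solution on $[0,t^*+\eps]$ being part of the hypothesis). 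That $X(t)$ stays inside $n\cD_\eps$ until epoch $t^*n$ is the standard stopping-time bootstrap: let $T$ be the first epoch with $X(T)\notin n\cD_\eps$; applying Wormald's theorem on $[0,T\wedge t^*n]$ yields $\|X(t)/n-x(t/n)\|=o(1)$ there, and since $x$ satisfies \eqref{eq_prop_diffeq} with slack $\eps$ on all of $[0,t^*+\eps]$ --- and $\|\vM(t)\|$, a continuous function of $X(t)/n$ with denominators bounded below, converges to $\|M\|$ --- the trajectory cannot reach $\partial(n\cD_\eps)$ before epoch $t^*n$, so $T>t^*n$ \whp.

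The main obstacle is the boundedness hypothesis, i.e.\ controlling how far $X$ can move in a single epoch. Colouring the chosen degree-$\leq\Delta$ vertex $v$ with two available colours and updating its at most $\Delta$ neighbours changes $O(1)$ coordinates of $X$ by $O(1)$; the difficulty is the ensuing cascade of forced colourings, in which each coloured vertex deletes a colour from the lists of its uncoloured neighbours and may thereby turn some of them into new forced vertices. Following~\cite{AchMoore3col}, I would dominate this cascade, conditionally on $\fD_t$, by a multitype branching process whose mean offspring operator is essentially $\vM(t)$; the assumption $\|\vM(t)\|\leq1-\eps/2$ makes the Perron eigenvalue strictly less than $1$, so the number $S_t$ of vertices coloured in epoch $t$ satisfies $\Erw\brk{S_t\mid\fD_t}=O(1)$ and an exponential tail bound $\Pr\brk{S_t\geq x\mid\fD_t}\leq C\eul^{-cx}$, uniformly over $t<T$. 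Truncating at $x=\log^2 n$ and taking a union bound over the $O(n)$ epochs disposes of the rare long epochs, and on the complementary event every coordinate of $X$ moves by at most $O(\log^2 n)$ per epoch while all conditional moments $\Erw\brk{|X_i(t+1)-X_i(t)|^j\mid\fD_t}$ remain $O(1)$ for each fixed $j$ --- which is what the method requires. The delicate part here is making the domination rigorous: the cascade is self-exhausting, since it consumes vertices, so the offspring laws are not literally those of a Galton--Watson process, and one must control the cascade's interaction with the conditioning $\fD_t$; but this runs parallel to the corresponding argument of Achlioptas and Moore. With the three hypotheses in hand, Wormald's theorem delivers the claim.
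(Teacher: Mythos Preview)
Your proposal is correct and follows exactly the paper's approach: the paper's own proof is a two-sentence invocation of Wormald's differential equation method~\cite{Wormald}, citing Fact~\ref{prop_Delta} and Proposition~\ref{prop_high} for the initial conditions and Proposition~\ref{prop_condex} for the one-step drifts. You have simply spelled out the standard hypotheses (trend, Lipschitz, boundedness via the sub-critical branching process) that the paper leaves implicit under ``standard application''.
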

\begin{proof}
	This is a standard application of the method of differential equations \cite[\Thm~2]{Wormald}.
	Specifically, Fact~\ref{prop_Delta} and \Prop~\ref{prop_high} yield the initial conditions~\eqref{eqODEinitial_full}--\eqref{eqODEinitial_full2}.
	Moreover, rescaling the conditional expectations from~\eqref{eq_prop_condex1}--\eqref{eq_prop_condex2} yields the equations~\eqref{eqODEcomplicated1}--\eqref{eqODEcomplicated2}.
\end{proof}

\subsection{The flat-white solution}\label{sec_flat}
As a next step we construct a solution to~\eqref{eqODEcomplicated1}--\eqref{eqODEinitial_full2} that satisfies \eqref{eq_prop_diffeq}, provided that Assumption~\ref{assumption_ode} is satisfied.
We refer to this solution as the {\em flat-white solution} to the ODEs.
Due to the uniqueness theorem for ODEs, the flat-white solution is in fact the {\em only} solution to \eqref{eqODEcomplicated1}--\eqref{eqODEinitial_full2}.
The construction of the flat-white solution is based on the conception that throughout the execution of $\algo$ the colour list assigned to a vertex $v$ is asymptotically independent of the planted colour $\SIGMA^*_v$.
More precisely, guided by this heuristic assumption we harness the functions $u_\ell(t),w_\ell(t)$ from Assumption~\ref{assumption_ode} to craft a solution to \eqref{eqODEcomplicated1}--\eqref{eqODEcomplicated2}.

\begin{proposition}\label{prop_flat}
	Under Assumption~\ref{assumption_ode} the following is true.
	With $\theta=(s,d_1,d_2,d_3)$ the functions 	
	\begin{align}\label{eq_prop_flat_w}
		w_{\theta}(t) = & \frac {w_{d_1+d_2+d_2}} 3 \binom{d_1+d_2+d_3}{d_1, d_2, d_3}  \prod_{s'=1}^{3} \bc{\frac{\eul^{-\beta \vecone\brk{s=s'}}}{2+\eul^{-\beta}}}^{d_{s'}},\\
		u_{c, \theta}(t) = & \frac {u_{d_1+d_2+d_2}} 9 \binom{d_1+d_2+d_3}{d_1, d_2, d_3}  \prod_{s'=1}^{3} \bc{\frac{\eul^{-\beta \vecone\brk{s=s'}}}{2+\eul^{-\beta}}}^{d_{s'}}\label{eq_prop_flat_u}
	\end{align}
	solve the system of differential equations \eqref{eqODEcomplicated1}--\eqref{eqODEcomplicated2} and satisfy~\eqref{eq_prop_diffeq}.
\end{proposition}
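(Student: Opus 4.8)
The ansatz \eqref{eq_prop_flat_w}--\eqref{eq_prop_flat_u} writes each variable as a scalar depending only on the total degree $\ell=d_1+d_2+d_3$, times the \emph{colour-profile weight}
\[
W_s(d_1,d_2,d_3)=\binom{d_1+d_2+d_3}{d_1,d_2,d_3}\prod_{r=1}^3\Bigl(\frac{\eul^{-\beta\vecone\{s=r\}}}{2+\eul^{-\beta}}\Bigr)^{d_r},
\]
which is exactly the conditional law, given the total degree and the planted colour $s$, of the neighbourhood colour-split $(d_1,d_2,d_3)$ in $\G^*_\Delta$ (cf.\ Fact~\ref{prop_Delta}); thus $w_\theta=\tfrac13w_\ell W_s(\theta)$ and $u_{c,\theta}=\tfrac19u_\ell W_s(\theta)$. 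I would start by recording the three elementary identities that drive everything: (i) $\sum_{|\vec d|=\ell}W_s(\vec d)=1$ (multinomial theorem, using $\sum_r\eul^{-\beta\vecone\{s=r\}}=2+\eul^{-\beta}$); (ii) the first moment $\sum_{|\vec d|=\ell}d_rW_s(\vec d)=\ell\,\eul^{-\beta\vecone\{s=r\}}/(2+\eul^{-\beta})$; (iii) the shift identity $(d_r+1)W_s(\theta^{+r})=(\ell+1)\,\eul^{-\beta\vecone\{s=r\}}/(2+\eul^{-\beta})\cdot W_s(\theta)$.

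Next I would plug the ansatz into the auxiliary functions $e_{s,s'}$, $p_{c,\theta}$, $M$, $\kappa_{c,s,s'}$, $\kappa_{s,s'}$ (defined just above \eqref{eqODEcomplicated1}) and watch them collapse. Writing $S=\sum_ii(u_i+w_i)$, $\Lambda=\sum_ii(i-1)u_i$ and $\rho=(\sum_ii^{\alpha+1}u_i)/(\sum_ii^\alpha u_i)$, identities (i)--(ii) give $e_{s,s'}=\tfrac{\eul^{-\beta\vecone\{s=s'\}}}{3(2+\eul^{-\beta})}S$ and $p_{c,\theta}=\tfrac{\ell^\alpha u_\ell}{9\sum_ii^\alpha u_i}W_s(\theta)$ (note that $\sum_{\chi\ne c}u_{\chi,\theta}$, hence $p_{c,\theta}$, is colour-independent under the ansatz). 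Using (iii) and cancelling the factors $\eul^{-\beta\vecone\{s=s'\}}/(2+\eul^{-\beta})$ between the numerator of $M$ and $e_{s,s'}$, the matrix $M$ becomes a tensor product $M=(\vecone\{c\ne c'\})_{c,c'}\otimes\widetilde K$ with $\widetilde K$ a $\beta$-independent operator of rank at most $3$ (its $(\theta',\theta)$ entry factors as a function of $\theta'$ times the $s(\theta')$-th degree coordinate of $\theta$); equivalently $\widetilde K\vx$ depends on $\vx$ only through the three moments $\sum_\theta d_r(\theta)x_\theta$, $r\in\{1,2,3\}$. Since $(1,1,1)$ is an eigenvector of $(\vecone\{c\ne c'\})_{c,c'}$ with eigenvalue $2$, and $p=(1,1,1)\otimes q$ with the $q$ above, the Neumann series collapses: $(\id-M)^{-1}p=(1,1,1)\otimes(\id-2\widetilde K)^{-1}q$, which I would sum by passing to the $3\times3$ compression $B$ of $\widetilde K$. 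Identities (i)--(ii) give $\sum_\theta d_r(\theta)q_\theta=\rho/9$ (independent of $r$) and $B(1,1,1)^{\!\top}=\tfrac{\Lambda}{3S}(1,1,1)^{\!\top}$, whence $\kappa_{s,s'}=\tfrac{\eul^{-\beta\vecone\{s=s'\}}}{3(2+\eul^{-\beta})}\cdot\tfrac{3\rho S}{3S-2\Lambda}$. The linchpin is then the bookkeeping identity $\sum_i\bigl(3iw_i+(5i-2i^2)u_i\bigr)=3S-2\Lambda$ (expand $i^2u_i=i(i-1)u_i+iu_i$), which shows that the scalar $\kappa$ of \eqref{eqkappa} equals $\tfrac{3\rho S}{3S-2\Lambda}$, i.e.\ $\kappa_{s,s'}=\tfrac{\eul^{-\beta\vecone\{s=s'\}}}{3(2+\eul^{-\beta})}\kappa$. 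Feeding $e_{s,s'}$, $p$ and $\kappa_{s,s'}$ back into \eqref{eqODEcomplicated1}--\eqref{eqODEcomplicated2} and again invoking (i)--(iii), every $\beta$- and profile-dependent factor cancels: the $w$-equation becomes $\tfrac13W_s(\theta)\frac{\dd w_\ell}{\dd t}=-\tfrac13W_s(\theta)\frac{\kappa\ell w_\ell}{S}$ and the $u$-equation becomes $\tfrac19W_s(\theta)\frac{\dd u_\ell}{\dd t}=\tfrac19W_s(\theta)\Bigl(\kappa\frac{(\ell+1)(w_{\ell+1}+u_{\ell+1}/3)-\ell u_\ell}{S}-\frac{\ell^\alpha u_\ell}{\sum_ii^\alpha u_i}\Bigr)$, i.e.\ exactly \eqref{eqODE} multiplied by the profile weight. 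Since \eqref{eqODEinitial_full}--\eqref{eqODEinitial_full2} are literally the ansatz at $t=0$, this shows the ansatz solves the initial value problem.

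It remains to verify \eqref{eq_prop_diffeq}. Positivity is immediate: by \eqref{eqODEpos} and the crude bound $W_s(\theta)\ge(\eul^{-\beta}/(2+\eul^{-\beta}))^\Delta$ we get $w_\theta(t),u_{c,\theta}(t)\ge\tfrac\xi9(\eul^{-\beta}/(2+\eul^{-\beta}))^\Delta>0$ throughout $[0,t^*+\xi]$. For the spectral condition I would once more exploit the tensor/low-rank structure: the nonzero eigenvalues of $\widetilde K$ are those of the $3\times3$ compression $B=\tfrac{\Lambda}{3S(2+\eul^{-\beta})}\bigl((\eul^{-\beta}-1)\id_3+J_3\bigr)$, namely $\tfrac{\Lambda}{3S}$ and $\tfrac{\Lambda(\eul^{-\beta}-1)}{3S(2+\eul^{-\beta})}$ (the latter with multiplicity two), so the spectral quantity of $M$ controlling the Neumann expansion of $(\id-M)^{-1}$ equals $2\cdot\tfrac{\Lambda}{3S}=\tfrac23\cdot\tfrac{\sum_ii(i-1)u_i}{\sum_ii(w_i+u_i)}$; by \eqref{eqODElambda} this is at most $\tfrac23(\tfrac32-\xi)=1-\tfrac23\xi$, which gives \eqref{eq_prop_diffeq} with $\eps=\tfrac23\xi$. (The threshold $\tfrac32$ in \eqref{eqODElambda} is precisely explained by the factor $2$ contributed by the colour block $(\vecone\{c\ne c'\})_{c,c'}$.)

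The step I expect to take the most care is the middle one: recognising that the flat ansatz turns $M$ into a rank-$3$ operator in the degree-profile coordinates (so that $(\id-M)^{-1}p$ and $\kappa_{s,s'}$ admit closed forms via the $3\times3$ compression $B$), and verifying that it is exactly the ratio appearing in \eqref{eqODElambda} that governs the spectrum of $M$. The cancellations reducing \eqref{eqODEcomplicated1}--\eqref{eqODEcomplicated2} to \eqref{eqODE}, though lengthy, are mechanical once the $W_s$-identities and the $\kappa$-identity are available.
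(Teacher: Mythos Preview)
Your proposal is correct and follows the same overall architecture as the paper: compute the auxiliary quantities $e_{s,s'}$, $p$, $M$, $\kappa_{c,s,s'}$ under the flat ansatz, show that \eqref{eqODEcomplicated1}--\eqref{eqODEcomplicated2} collapse to \eqref{eqODE}, and extract the spectral bound from \eqref{eqODElambda}. The final formulas you obtain for $e_{s,s'}$, $\kappa_{c,s,s'}$ and the spectral quantity $\tfrac{2\Lambda}{3S}=\tfrac{2(u^{(2)}-u^{(1)})}{3(u^{(1)}+w^{(1)})}$ coincide with the paper's.

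Where you differ is in the linear-algebraic packaging. The paper factors $M=M^EM^V$ through an auxiliary $27$-dimensional space indexed by triples $(c,s_1,s_2)$, then works with the $27\times27$ matrix $M^VM^E$: it computes $\hat p=M^Vp$, checks that $\hat p$ is an eigenvector of $M^VM^E$, and sums the Neumann series via that eigenrelation (Lemma~\ref{lemma_kappa}); the spectral bound (Lemma~\ref{lem_eigenvector}) is obtained from $\|M^\ell\|\le\|M^E\|\,\|M^VM^E\|^{\ell-1}$ together with an explicit tensor formula for $M^VM^E$. You instead observe directly that under the ansatz $u_{c'',\theta}$ is $c''$-independent, so $M=(\vecone\{c\ne c'\})_{c,c'}\otimes\widetilde K$ is a genuine Kronecker product, and then exploit that $\widetilde K$ has rank at most three to pass to a $3\times3$ compression $B$. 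This is arguably cleaner: the colour block contributes the eigenvalue $2$ (via $(1,1,1)$), the degree-profile block contributes $\Lambda/(3S)$ (via the all-ones vector of $B$), and their product explains both the spectral bound and the closed form for $\kappa_{s,s'}$ in one stroke, without the detour through the $27$-dimensional space. The paper's route has the mild advantage that the factorisation $M=M^EM^V$ holds even before plugging in the ansatz, so it separates the structural decomposition from the flat-white simplification; your route trades that generality for a shorter computation.
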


The delicate point that we need to verify toward the proof of \Prop~\ref{prop_flat} is that the conditions from Assumption~\ref{assumption_ode} do indeed suffice to guarantee the conditions~\eqref{eq_prop_diffeq}.
The proof of \Prop~\ref{prop_flat} can be found in \Sec~\ref{sec_prop_flat}.

\subsection{Cleaning up}\label{sec_cleanup}
Fact~\ref{prop_diffeq} and \Prop~\ref{prop_flat} track the $\algo$ algorithm for its first $t^*n$ epochs, with $t^*$ from Assumption~\ref{assumption_ode}.
However, as we discussed in \Sec~\ref{sec_deferred} by this time the algorithm will not yet have coloured all the vertices.
For example, $\algo$ does not touch any vertices that lie in components where all vertices have initial lists of size three.
In addition, by time $t^*n$ $\algo$ will likely not even be finished with the giant component of the initial graph $\G^*_\Delta$.
However, the following proposition shows that \whp\ the graph at time $t^*n$ likely decomposes into small components that can be coloured independently.

\begin{proposition}\label{prop_cleanup}
	Under Assumption~\ref{assumption_ode} the largest component of the graph $\G^*_{\Delta,t}$ with $t=\lfloor t^*n\rfloor $ has size $O(\log n)$ and all but $O(\log n)$ components of $\G^*_{\Delta,t}$ are acyclic.
	Furthermore, \whp\ for a random proper 3-colouring $\vec\tau$ of the acyclic components of $\G^*_{\Delta,t}$ such that every vertex $v$ receives a colour $\vec\tau(v)\in\cL_v(t)$ we have
	\begin{align}\label{eqprop_cleanup}
		\sum_{s,s'=1}^3\sum_v\bc{\vecone\{\vec\tau_v=s,\SIGMA^*(v)=s'\}-\frac19}&=o(n).
	\end{align}
\end{proposition}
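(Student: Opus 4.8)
\textbf{Proof proposal for Proposition~\ref{prop_cleanup}.}
The plan is to split the claim into two essentially independent parts: (a) a structural statement about $\G^*_{\Delta,t}$ at time $t=\lfloor t^*n\rfloor$, namely that it is sub-critical and hence shatters into small, mostly acyclic pieces; and (b) a statement that a uniformly random list-proper colouring of the acyclic remainder is asymptotically uncorrelated with the ground truth. For part (a) I would first invoke Fact~\ref{prop_diffeq} together with \Prop~\ref{prop_flat}: they tell us that \whp\ for every type $\theta=(s,d_1,d_2,d_3)$ the counts satisfy $\vW_\theta(t)=nw_\theta(t)+o(n)$ and $\vU_{c,\theta}(t)=nu_{c,\theta}(t)+o(n)$, with $w_\theta,u_{c,\theta}$ given by the flat-white formulas~\eqref{eq_prop_flat_w}--\eqref{eq_prop_flat_u}. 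Combining this with Fact~\ref{prop_deferred}, the graph $\G^*_{\Delta,t}$ is, conditionally on $\fD_t$, a uniformly random graph with the prescribed type-degree sequence. The branching-process (multitype Galton--Watson) heuristic then says that the giant component persists iff the relevant offspring operator has spectral radius $>1$; the offspring matrix here is governed by $\sum_{i} i(i-1)u_i(t)\big/\sum_i i(w_i(t)+u_i(t))$ on the ``two-colour'' vertices, since only vertices with a list of size two propagate constraints. Condition~\eqref{eqODElambda} of Assumption~\ref{assumption_ode} says precisely that this quantity stays below $3/2-\xi$ for all $t\le t^*+\xi$, and a short computation shows the branching number of the remaining graph is exactly this ratio times $2/3$ (the factor $2/3$ coming from the fraction of list-updates that actually shrink a neighbour's list from three to two, or from two to one); hence it is $<1-\Omega(1)$ at $t=\lfloor t^*n\rfloor$. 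So by the standard theory of random graphs with given degree sequences (e.g.\ Molloy--Reed / configuration-model arguments, or the exploration-process analysis already used implicitly in~\cite{AchMoore3col}), \whp\ all components have size $O(\log n)$ and all but $O(\log n)$ of them are acyclic.

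For part (b), the key realisation is that on a \emph{tree} component, list colourings are easy to sample: the number of list-proper colourings factorises over a depth-first traversal, and conditionally on the root colour each child colour is chosen from its residual list uniformly among the admissible options. I would set up the claim~\eqref{eqprop_cleanup} as a second-moment / concentration statement for the random variable $X_{s,s'}=\sum_v\vecone\{\vec\tau_v=s,\SIGMA^*(v)=s'\}$. The crucial input is that, by the flat-white solution, the joint distribution of $(\cL_v(t),\SIGMA^*(v))$ over $v\in V(t)$ is asymptotically a product: the list $\cL_v(t)$ is asymptotically independent of the planted colour $\SIGMA^*(v)$ (this is exactly the ``conception'' underlying \Prop~\ref{prop_flat}, visible in the fact that~\eqref{eq_prop_flat_w}--\eqref{eq_prop_flat_u} depend on $s$ only through the symmetric product $\prod_{s'}(\cdots)^{d_{s'}}$). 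Since the list-proper colouring $\vec\tau$ of a tree component depends on $\SIGMA^*$ restricted to that component \emph{only} through the lists $\cL_v(t)$ (a list-proper colouring ignores planted colours entirely), $\vec\tau_v$ is asymptotically independent of $\SIGMA^*(v)$, and by symmetry of the three colours $\Pr[\vec\tau_v=s]=1/3+o(1)$ uniformly. Hence $\Erw[X_{s,s'}]=n/9+o(n)$. For the concentration I would argue that each acyclic component contributes $O(\log n)$ to $X_{s,s'}$ and different components are colored independently (given $\fD_t$), so Azuma/Hoeffding across the $\Omega(n/\log^2 n)$-or-so small components gives $X_{s,s'}=n/9+o(n)$ \whp; the $O(\log n)$ non-acyclic components contribute only $o(n)$ and can be discarded.

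A couple of technical points need care. First, one must rule out that the flat-white tracking only holds for the giant and breaks down on the small components: but Fact~\ref{prop_diffeq} controls the \emph{global} counts $\vW_\theta,\vU_{c,\theta}$, which by definition include vertices in every component, and the exploration argument for part (a) only needs those aggregate statistics plus Fact~\ref{prop_deferred}. Second, ``random proper 3-colouring such that $\vec\tau(v)\in\cL_v(t)$'' presupposes such a colouring exists on each acyclic component; on a tree this is immediate as long as no vertex has an empty list, which holds \whp\ by the bound on bad vertices from \Prop~\ref{prop_condex} (indeed \Prop~\ref{prop_condex} gives $O(1)$ bad vertices in expectation, so \whp\ the acyclic remainder is list-colourable, and for \Cor~\ref{cor_main} one uses the $\Omega(1)$-probability event that there are none).

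\textbf{Main obstacle.} The step I expect to be most delicate is establishing sub-criticality of $\G^*_{\Delta,t}$ rigorously, i.e.\ converting condition~\eqref{eqODElambda} into a genuine ``largest component $=O(\log n)$ \whp'' statement rather than a branching-process heuristic. The graph at time $t=\lfloor t^*n\rfloor$ is a random graph with a \emph{type}-structured degree sequence (three planted colour classes, with constrained cross-degrees), produced by $t^*n$ steps of an adaptive algorithm, and one has to check that the conditioning on $\fD_t$ does not introduce long-range dependencies that spoil the local-tree-likeness needed for the subcritical exploration bound. The cleanest route is probably to verify that the configuration-model analogue of $\G^*_{\Delta,t}$ has a multitype offspring operator with spectral radius bounded away from $1$ by~\eqref{eqODElambda} (after accounting for the $2/3$ propagation factor), then transfer via contiguity/switching arguments between the configuration model and the uniform model on that degree sequence; controlling the $o(1)$ error terms from Fact~\ref{prop_diffeq} uniformly over all $\Delta+1$-ish degree classes while keeping the spectral radius strictly subcritical is where the real bookkeeping lies.
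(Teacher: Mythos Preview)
Your plan for part~(b) is sound and close to the paper's (brief) argument. The gap is in part~(a): you invoke the wrong hypothesis and set up the wrong branching process. The proposition asks about the component structure of the \emph{graph} $\G^*_{\Delta,t}$, not about the propagation of forced colourings. For graph exploration (BFS from a random vertex), the lists $\cL_v(t)$ are irrelevant: every remaining vertex, whether it has two or three available colours, sits in the graph with its current degree, and every edge is followed. Consequently the size-biased offspring of a vertex reached along an edge has mean
\[
\frac{\sum_{\ell}\ell(\ell-1)(u_\ell(t^*)+w_\ell(t^*))}{\sum_{\ell}\ell(u_\ell(t^*)+w_\ell(t^*))},
\]
with both $u_\ell$ \emph{and} $w_\ell$ in the numerator, and no $2/3$ factor. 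Requiring this to be $<1$ is exactly condition~\eqref{eqODEend}, the Molloy--Reed-type inequality $\sum_\ell \ell(\ell-2)(u_\ell+w_\ell)<-\xi$; the paper explicitly hinges the proof on~\eqref{eqODEend} (see the remark after \Prop~\ref{prop_cleanup} in \Sec~\ref{sec_cleanup} and the computation in \Sec~\ref{sec_prop_cleanup}).

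The condition~\eqref{eqODElambda} and the quantity $\frac{2}{3}\cdot\frac{\sum_i i(i-1)u_i}{\sum_i i(w_i+u_i)}$ that you wrote down are the spectral radius of the \emph{forced-colouring} offspring operator $M^VM^E$ from \Lem~\ref{lem_eigenvector}; they control the tree $\fT_t$ within an epoch and thereby the number of bad vertices, not the residual graph's components. Your sentence ``only vertices with a list of size two propagate constraints'' confirms the conflation: that is true for the algorithm's cascade, but false for BFS on the graph. Since~\eqref{eqODElambda} bounds only $\sum_i i(i-1)u_i$ and says nothing about $\sum_i i(i-1)w_i$, it does not by itself yield subcriticality of $\G^*_{\Delta,t}$. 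Once you swap in~\eqref{eqODEend} and redo the (now purely degree-based) multitype branching computation, the rest of your outline---Fact~\ref{prop_deferred} plus configuration-model transfer---goes through as you sketched.
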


The proof of \Prop~\ref{prop_cleanup}, which can be found in \Sec~\ref{sec_prop_cleanup}, hinges on condition~\eqref{eqODEend}, which resembles the well-known Molloy--Reed condition for subcriticality of random graphs with given degrees~\cite{MolloyReed}.

Finally, we observe that the colouring obtained via the strategy outlined in this section has vanishing agreement with $\SIGMA^*$ \whp

\begin{corollary}\label{cor_overlap}
	Under Assumption~\ref{assumption_ode} the following is true.
	Obtain $\SIGMA^\dagger\to\{1,2,3\}$ as follows.
	\begin{itemize}
		\item For all $v\in V^\#\setminus V^\#_\Delta$, let $\SIGMA^\dagger_v=\sigma^\#_v$, with $\sigma^\#$ from \Prop~\ref{prop_high}.
		\item Let $\vec\tau$ agree with the partial colouring produced by running $\algo$ on $\G_\Delta^*$.
		\item For all remaining vertices let $\SIGMA^\dagger_v=\vec\tau_v$, with $\vec\tau$ from \Prop~\ref{prop_cleanup}.
	\end{itemize}
	Then \whp\ we have
	\begin{align}\label{eq_cor_overlap}
		\agree(\SIGMA^\dagger)&=o(1)&\mbox{and}&&
		\sum_{v\in V_n}\vecone\{\SIGMA^{\dagger}(v)=i\}&\sim\frac n3\mbox{ for all }i\in\{1,2,3\}.
	\end{align}
\end{corollary}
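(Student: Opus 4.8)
The plan is to reduce the corollary to a statement about the $3\times3$ overlap matrix and then assemble it from the structural results already established. Set $O_{s,c}=\abs{\cbc{v\in V_n:\SIGMA^*(v)=s,\ \SIGMA^\dagger(v)=c}}$. Since $\sum_{s,c}O_{s,c}=n$, it suffices to show that \whp\ $O_{s,c}=n/9+o(n)$ for all $s,c\in\{1,2,3\}$. Granting this, for every $\kappa\in\mathbb S_3$ we get $\sum_{v\in V_n}\vecone\{\SIGMA^*(v)=\kappa\circ\SIGMA^\dagger(v)\}=\sum_c O_{\kappa(c),c}=n/3+o(n)$, so $\agree(\SIGMA^\dagger)=-\tfrac12+\tfrac3{2n}(n/3+o(n))=o(1)$; and $\sum_{v\in V_n}\vecone\{\SIGMA^\dagger(v)=i\}=\sum_s O_{s,i}=n/3+o(n)$, which is the second part of~\eqref{eq_cor_overlap}. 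Equivalently, we must show that $O_{s,c}$ is independent of $(s,c)$ up to an additive $o(n)$.

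For this I would split $V_n$ according to how $\SIGMA^\dagger$ was defined: the set $V^\#\setminus V^\#_\Delta$ of vertices of degree larger than $\Delta$, where $\SIGMA^\dagger=\SIGMA^\#$; the set $A$ of vertices that $\algo$ colours during its first $\lfloor t^*n\rfloor$ epochs; and the remainder $R$, covering all other vertices and coloured by the cleanup colouring $\vec\tau$ of \Prop~\ref{prop_cleanup} (up to the $O(\log^2 n)$ vertices of non-acyclic leftover components and the $o(n)$ bad vertices, which we colour arbitrarily). It is enough to prove that each of these three sets $S$ contributes a balanced block: $\abs{\cbc{v\in S:\SIGMA^*(v)=s,\ \SIGMA^\dagger(v)=c}}=\abs S/9+o(n)$ for all $s,c$. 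For $S=V^\#\setminus V^\#_\Delta$, where $\SIGMA^\dagger=\SIGMA^\#$, this is \Prop~\ref{prop_high}(iii) (the proof of which shows that $\SIGMA^\#$, being built from $\G^*$ without reference to $\SIGMA^*$, is asymptotically balanced against $\SIGMA^*$ on every colour-symmetric $\G^*$-measurable subset of $V^\#$, in particular on the high-degree vertices). For $S\in\{A,R\}$ I would establish the balance in the planted index $s$ and in the colour index $c$ separately.

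Balance in $s$ is read off from the flat-white solution of \Prop~\ref{prop_flat}: summing~\eqref{eq_prop_flat_w}--\eqref{eq_prop_flat_u} over all $\theta=(s,d_1,d_2,d_3)$ with $d_1+d_2+d_3=\ell$, the factor $\sum_{d_1+d_2+d_3=\ell}\binom{\ell}{d_1,d_2,d_3}\prod_{s'}(\eul^{-\beta\vecone\{s=s'\}}/(2+\eul^{-\beta}))^{d_{s'}}$ collapses to $1$, so by Fact~\ref{prop_diffeq} the number of still-uncoloured vertices of planted colour $s$ after $t\le t^*n$ epochs is $\tfrac n3\sum_\ell(w_\ell(t)+u_\ell(t))+o(n)$, independent of $s$; combined with $\vN_s=\tfrac n3\Pr[\Po(d)\le\Delta]+o(n)$ from Fact~\ref{prop_Delta} this makes $\abs{A\cap\SIGMA^{*\,-1}(s)}$ and $\abs{R\cap\SIGMA^{*\,-1}(s)}$ each independent of $s$ up to $o(n)$. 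Balance in $c$ on $R$ is precisely~\eqref{eqprop_cleanup}. Balance in $c$ on $A$ is the delicate point: on the flat-white solution the quantities $M$ and $p$ from~\eqref{eqpctheta}--\eqref{eqMmatrix}, and hence $(\id-M)^{-1}p$, are invariant under permuting the colour coordinate, because the only colour-dependence sits in $\vecone\{c\neq c'\}$ and in $\sum_{\chi\neq c}u_{\chi,\theta}$ while $u_{\chi,\theta}$ is independent of $\chi$; re-reading the proof of \Prop~\ref{prop_condex} one checks that the expected number of planted-$s$ vertices that $\algo$ colours $c$ in epoch $t+1$ equals $\sum_{d_1,d_2,d_3}((\id-M(t))^{-1}p(t))_{(c,(s,d_1,d_2,d_3))}+o(1)$, which by this symmetry does not depend on $c$. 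Tracking the additional variables $\vX_{s,c}(t)=\abs{\cbc{v:\SIGMA^*(v)=s,\ v\text{ coloured }c\text{ by epoch }t}}$ alongside the $\vW_\theta,\vU_{c,\theta}$, the same application of Wormald's theorem behind Fact~\ref{prop_diffeq} yields $\vX_{s,c}(\lfloor t^*n\rfloor)=\tfrac13\abs{A\cap\SIGMA^{*\,-1}(s)}+o(n)=\abs A/9+o(n)$. Adding the three balanced blocks gives $O_{s,c}=\tfrac19\big(\abs{V^\#\setminus V^\#_\Delta}+\abs A+\abs R\big)+o(n)=n/9+o(n)$.

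The main obstacle is this last step: promoting the manifest invariance of the flat-white ODE solution under colour permutations to a \whp\ colour balance of the colouring that $\algo$ actually produces. This requires going back into the epoch/cascade bookkeeping behind \Prop~\ref{prop_condex} to identify $\sum_{d_1,d_2,d_3}((\id-M(t))^{-1}p(t))_{(c,(s,d_1,d_2,d_3))}$ with the per-epoch expected number of planted-$s$ vertices receiving colour $c$, and then checking that the extra variables $\vX_{s,c}$ satisfy the boundedness and Lipschitz hypotheses of Wormald's theorem — their one-step increments are bounded by the maximum degree of $\G^*_\Delta$, which is $O(\log n)$ \whp, exactly as for the variables already tracked. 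By contrast, the reduction in the first paragraph and the bookkeeping against \Prop~\ref{prop_high} and \Prop~\ref{prop_cleanup} are routine.
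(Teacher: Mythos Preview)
Your proof is correct and follows the same approach as the paper's (one-sentence) proof, invoking \Prop~\ref{prop_high}(iii) for the high-degree piece, \eqref{eqprop_cleanup} for the cleanup, and the colour-symmetry of the flat-white solution~\eqref{eq_prop_flat_w}--\eqref{eq_prop_flat_u} for the $\algo$-coloured piece. The one point on which you go further than the paper is the step you rightly flag as the ``main obstacle'': the paper simply asserts that the symmetry of the flat-white solution makes the colours assigned by $\algo$ independent of the planted colour, whereas you make this precise by tracking the auxiliary counts $\vX_{s,c}$ alongside $\vW_\theta,\vU_{c,\theta}$ and applying Wormald's theorem to them as well.
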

\begin{proof}
	The assertion is an immediate consequence of \Prop~\ref{prop_high} (iii), Equation~\eqref{eqprop_cleanup} and the fact that the functions $w_\theta(t),u_{c,\theta}(t)$ from~\eqref{eq_prop_flat_w}--\eqref{eq_prop_flat_u} and therefore the colours assigned by $\algo$ are independent of the planted colour~$s$.
\end{proof}

\noindent
At this point the proofs of \Thm~\ref{thm_main} and \Cor~\ref{cor_main} are straightforward from the above propositions.
The details can be found in \Sec~\ref{sec_proof_main}.

\section{Proof of \Prop~\ref{prop_high}}\label{sec_prop_high}

\noindent
The starting point of the proof of \Prop~\ref{prop_high} is the well known fact that for large enough $\Delta$ the graph $\G^\#$ is sub-critical, as summarised by the following lemma.

\begin{lemma}\label{lem_high_subcrit}
	For any $d>0$ there exists $\Delta_0>0$ such that for all $\Delta\geq\Delta_0$ and all $\beta\geq0$,
	\begin{itemize}
		\item the largest component of the graph $\G^\#$ contains $O(\log n)$ vertices, and
		\item the expected number of cyclic components of $\G^\#$ is $O(1)$.
	\end{itemize}
	Furthermore, with probability $\Omega(1)$ the random graph $\G^\#$ is acyclic.
\end{lemma}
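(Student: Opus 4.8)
The plan is to establish that, for $\Delta$ large, $\G^\#$ is a \emph{subcritical} random graph, exploiting that vertices of degree exceeding $\Delta$ are extremely scarce. Call such a vertex a \emph{hub} and write $q_\Delta=\pr\brk{\Po(d)\ge\Delta}$; the Poisson tail bound gives $q_\Delta\le\eul^{-d}(\eul d/\Delta)^\Delta$, so $\Delta q_\Delta\to0$ super-exponentially fast in $\Delta$. Because the edge probabilities~\eqref{eqpvw} are calibrated so that the total expected degree of every vertex equals $d$ irrespective of $\SIGMA^*$, the degree of any vertex of $\G^*$ is asymptotically $\Po(d)$, and for this lemma the community labels play no role. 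The structural point is that \emph{every edge of $\G^\#$ is incident to a hub}; hence every vertex of a component $C$ of $\G^\#$ is either a hub or a $\G^*$-neighbour of a hub lying in $C$ (a non-hub of $V^\#$ always has a hub neighbour, which lands in its component), so $|C|\le\sum_{u\in S}(1+\deg_{\G^*}u)$ where $S$ is the set of hubs in $C$. Thus it suffices to control the number of hubs per component and the degrees of those hubs.

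First I would bound the number of hubs in a component by exploring the component by breadth-first search from an arbitrary vertex and coupling this to a branching process. In $\G^\#$ a non-hub has only hubs as neighbours, so the exploration alternates hub layers with layers of (almost surely low-degree) neighbours of hubs, and growth occurs only when a hub is processed: a hub of degree $D$ reveals its $D$ neighbours, each of which is a fresh hub with probability $\approx q_\Delta$ and otherwise a low-degree vertex contributing $\approx\Po(d)$ further neighbours, each again a hub with probability $\approx q_\Delta$. By the principle of deferred decisions — the explored part always has size $o(n)$, so it perturbs the relevant conditional edge probabilities only by $1+o(1)$ factors — the number of child hubs spawned by one hub is stochastically dominated by a variable of mean $(1+o(1))\,\Erw\brk{\Po(d)\mid\Po(d)\ge\Delta}(q_\Delta+d\,q_\Delta)$; since $\Erw\brk{\Po(d)\mid\Po(d)\ge\Delta}=\Delta+O(1)$, this mean is $(1+o(1))(1+d)\Delta q_\Delta$. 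As $(1+d)\Delta q_\Delta\to0$, there is $\Delta_0=\Delta_0(d)$ so that for $\Delta\ge\Delta_0$ this mean is at most $1/2$; hence the number of hubs in a component is stochastically dominated by the total progeny of a subcritical Galton--Watson process and has an exponentially decaying tail with bounded mean. Feeding this back, conditioned on a component having $m$ hubs, $\sum_{u\in S}(1+\deg_{\G^*}u)$ is dominated by a sum of $m$ independent copies of $1+\Po(d)$ conditioned on $\{\Po(d)\ge\Delta\}$, a variable of mean $\Delta+O(1)$ with a finite exponential moment, so a Chernoff bound makes this sum $O(\Delta m)$ except with probability $\eul^{-\Omega(m)}$. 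Choosing $m=\Theta(\log n)$ and a union bound over the $\le n$ starting vertices shows that \whp\ every component of $\G^\#$ has $O(\log n)$ vertices.

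For the cyclic components I would pass to a first-moment count of cycles. If $v_1v_2\cdots v_\ell v_1$ is a cycle of $\G^\#$, then since each edge is incident to a hub the non-hubs among the $v_i$ form an independent set of the cycle, so at least $\lceil\ell/2\rceil$ of them are hubs. Assigning each cycle edge to one of its hub endpoints and using, as above, that a vertex conditioned to have degree $\ge\Delta$ joins any fixed other vertex with probability $(1+o(1))\Delta/n$, the expected number of $\ell$-cycles of $\G^\#$ is at most $n^\ell\cdot2^\ell\cdot q_\Delta^{\lceil\ell/2\rceil}\cdot(\Delta/n)^\ell\le(2\Delta\sqrt{q_\Delta})^\ell$; since $2\Delta\sqrt{q_\Delta}<1$ for $\Delta\ge\Delta_0$, summing over $\ell\ge3$ gives $\Erw\brk{\#\{\text{cycles of }\G^\#\}}=O(\Delta\sqrt{q_\Delta})=o(1)$, in particular $O(1)$. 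As every cyclic component contains a cycle, this bounds the expected number of cyclic components, and Markov's inequality yields $\pr\brk{\G^\#\text{ acyclic}}=1-o(1)=\Omega(1)$; alternatively, starting only from the bounded-expectation estimate, a Poisson approximation (method of moments, or Chen--Stein) shows the number of cyclic components converges in law to a Poisson variable of finite mean $\mu$, whence the probability is $\to\eul^{-\mu}=\Omega(1)$.

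The main obstacle is making the exploration/branching-process coupling rigorous: one must carry along the $\sigma$-algebra generated by the edges examined so far, check that conditioning on the already-explored structure — which includes forcing the degrees of revealed vertices onto the correct side of $\Delta$ — changes the pertinent edge probabilities only by $1+o(1)$ factors, and handle the fact that processing a hub exposes an unbounded number of neighbours in a single step (so the exploration is not literally a Galton--Watson tree but is merely dominated by one, with the giant-component-scale degrees of hubs controlled by the standard $O(\log n/\log\log n)$ maximum-degree bound for $\G^*$). Once the set-up is in place the subcriticality computation is a one-line Poisson-tail estimate, and the three conclusions follow from textbook branching-process tail bounds together with a union bound.
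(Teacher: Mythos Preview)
Your approach is correct and rests on the same core idea as the paper --- both exploit that $\G^\#$ is subcritical because hubs are rare --- but the implementations differ. The paper sets up a two-type branching process (types ``big'' and ``small'') with an explicit $2\times2$ offspring matrix and simply observes that, by the subexponential Poisson tail, its norm drops below $1$ for large $\Delta$; it then invokes the standard consequences of subcriticality (components of size $O(\log n)$, bounded expected number of cyclic components, acyclicity with probability $\Omega(1)$) without further computation. You instead collapse to a one-type process on hubs only, absorbing the intermediate non-hub step into a two-generation transition with mean $\approx(1+d)\Delta q_\Delta\to0$, and you carry out an explicit first-moment cycle count. That count in fact yields the stronger statement $\Erw[\#\{\text{cycles in }\G^\#\}]=o_\Delta(1)$ and hence $\pr[\G^\#\text{ acyclic}]=1-o_\Delta(1)$, not merely $\Omega(1)$. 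Both routes work; the paper's is more structural and terse, while yours is more hands-on and buys a sharper acyclicity bound. One minor bookkeeping point: in your cycle estimate you combine the unconditional hub probability $q_\Delta$ with the \emph{conditional} edge probability $\Delta/n$; it is cleaner to use either the unconditional edge probability $O(d/n)$ together with $\pr[\Po(d)\ge\Delta-2]$ for each designated hub, or to absorb the $q_\Delta$ into the edge factor --- either way the final bound $(C\Delta\sqrt{q_\Delta})^\ell$ and the conclusion are unchanged.
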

\begin{proof}
	This follows from the routine observation that the local topology of $\G^\#$ follows a two-type branching process.
	The two types are vertices of degree greater than $\Delta$, called {\em big} vertices, and vertices of degree at most $\Delta$, called {\em small} vertices.
	To work out the offspring distributions, let $\vec B,\vec S$ be random variables with size-biased conditional Poisson distributions
	\begin{align*}
		\pr\brk{\vec B=h}&=\frac{h\pr\brk{\Po_{>\Delta}(d)=h}}{\ex[\Po_{>\Delta}(h)]},&
		\pr\brk{\vec S=h}&=\frac{h\pr\brk{\Po_{\leq\Delta}(d)=h}}{\ex[\Po_{\leq\Delta}(h)]}.
	\end{align*}
	Then the matrix $\cM=\begin{pmatrix}
		\cM_{\mathrm{big},\mathrm{big}}&\cM_{\mathrm{big},\mathrm{small}}\\
		\cM_{\mathrm{small},\mathrm{big}}&\cM_{\mathrm{small},\mathrm{big}}
		\end{pmatrix}$ 
	with the following entries captures the expected offsprings:
	\begin{align*}
		\cM_{\mathrm{big},\mathrm{big}}&=\frac{\ex\brk{\vec B-1}\ex\brk{\Po_{>\Delta}(d)}}d,&
		\cM_{\mathrm{small},\mathrm{big}}&=\frac{\ex\brk{\vec B-1}\ex\brk{\Po_{\leq\Delta}(d)}}d,\\
		\cM_{\mathrm{small},\mathrm{big}}&=\frac{\ex\brk{\vec S-1}\ex\brk{\Po_{>\Delta}(d)}}d,&
		\cM_{\mathrm{small},\mathrm{small}}&=0.
	\end{align*}
The initial distribution upon exploring from a random edge emerging from high-degree vertex simply reads
\begin{align}
	q_{\mathrm{big}} &= 1,&
	q_{\mathrm{small}} &= 0.
\end{align}
	Since the Poisson distribution has subexponential tails, we obtain $\|\cM\|<1$ for sufficiently large $\Delta$, which implies sub-criticality.
	As a consequence, the expected number of cyclic components is bounded, and with probability $\Omega(1)$ all components are acyclic.
\end{proof}

\begin{proof}[Proof of \Prop~\ref{prop_high}]
	In light of \Lem~\ref{lem_high_subcrit}, we proceed as follows to obtain a colouring $\SIGMA^\#$ of $\G^\#$.
	Pick any uncoloured vertex $r$ of degree greater than $\Delta$ (so long as one exists) and choose a random colour $\SIGMA^\#(v)\in\{1,2,3\}$; due to \Lem~\ref{lem_high_subcrit} we may assume that the component $\cC(r)$ of $r$ in $\G^\#$ is a tree.
	Let $\cV(r)$ be the set of vertices contained in the component of $r$.
	Further, extend $\SIGMA^\#$ to a colouring of $\cV(r)$ by choosing a random colouring of $\cC(r)$ with (at most) two colours from $\{1,2,3\}$.
	Then \whp\	the resulting colouring $\SIGMA^\#$ has the properties (i)--(iii).
	Finally, if $\G^\#$ is acyclic, then $\SIGMA^\#$ is a proper colouring of $\G^\#$.
\end{proof}

\section{Proof of \Prop~\ref{prop_condex}}\label{sec_prop_condex}

\noindent
In epoch $t$ of $\algo$ we execute Step~3 once and then subsequently Steps~5--8 a number of times.
During the single execution of Step 3 one random vertex $\vf_t$ with $\abs{\cL_{\vf_t}(t)}=2$ receives a random colour $\sigma(u)\in\cL_{\vf_t}(t)$.
We refer to the subsequent executions of Steps 4--8 as the {\em forced colourings} of epoch $t$.
Every time a vertex gets coloured, its colour gets removed from the lists of all its neighbours.
This may produce new vertices with only one colour left in their lists, etc.  
%By repeating Steps~5--8, we continue to colour these vertices with the only possible colour.
%Thus, each epoch $t$ consists of a free step and all implied colourings of this free step.

Let $\fF_t$ be the set of all vertices (including $\vf_t$) that receive a colour during epoch $t$.
By construction, for each vertex $u\in\fF_t\setminus\{\vf_t\}$ there exists another vertex $\fp(u)\in\fF_t$ such that colouring $\fp(u)$ with colour $\sigma({\fp(u)})$ reduced the list of colours available to $u$ to size one.
We call $\fp(u)$ the {\em parent vertex} of $u$.
Let $\fT_t$ be the directed graph rooted at $\vf_t$ that contains the edge $(\fp(u),u)$ for every $u\in\fF_t \setminus \cbc{\vf_t}$.
Furthermore, let
\begin{align*}
	\vK_{c,s,s'}(t)&=\sum_{v\in\fF_t}\vecone\{\SIGMA^*(v)=s,\,\sigma(v)=c\}\brk{\vd_{v,s'}-\vecone\{v\neq\vf_t,\,\SIGMA^*(\fp(v))=s'\}}
\end{align*}
be the number of edges that connect a vertex $v\in\fT_t$ with $\sigma(v)=c$ and $\SIGMA^*(v)=s$ with a vertex $w\not\in\fT_t$ with planted colour $\SIGMA^*(w)=s'$.

\begin{lemma}\label{lem_kappa}
	If~\eqref{eq_cond_prop_condex} is satisfied, then $\ex\brk{\vec K_{c,s,s'}\mid\fD(t)}=\vec\kappa_{c,s,s'}(t)+o(1)$.
\end{lemma}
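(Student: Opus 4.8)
The plan is to analyse the forced-colouring cascade $\fT_t$ of epoch $t$ as a (truncated) multi-type branching process, conditionally on $\fD(t)$, and to identify $\vec\kappa_{c,s,s'}(t)$ as the expected number of ``external'' edges of colour-type $(c,s,s')$ emanating from it. First I would set up the exploration: by Fact~\ref{prop_deferred}, given $\fD(t)$ the remaining graph $\G^*_\Delta(t)$ is a uniformly random graph with the prescribed colour-degrees $\vd_{v,s}(t)$, so revealing edges one at a time as the cascade proceeds is governed by configuration-model-type statistics. The root $\vf_t$ is chosen by Step~3 with probability proportional to $|\partial v|^\alpha$ among list-size-$2$ vertices; after $\vf_t$ is coloured with a uniformly random $\sigma(\vf_t)\in\cL_{\vf_t}(t)$, each neighbour of $\vf_t$ whose list had size $2$ and contained $\sigma(\vf_t)$ becomes forced, is coloured (deterministically, with the single remaining colour), and in turn forces some of \emph{its} neighbours, etc. The key observation is that because edges of $\G^*_\Delta(t)$ are revealed afresh, and because the bad-vertex and cycle contributions are negligible under~\eqref{eq_cond_prop_condex} (as will be shown in the rest of \Prop~\ref{prop_condex}), the cascade looks locally like a tree and the recursion for the \emph{types} of newly forced vertices closes.

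The technical heart is to write the expected ``type profile'' of $\fT_t$ as the solution of a linear fixed-point equation. Concretely, let me index forced vertices by a pair $(c,\theta)$ with $c$ the colour they receive and $\theta=(s,d_1,d_2,d_3)\in\cT_\Delta$ their type; the vector $\vp(t)$ from~\eqref{eqpctheta} is exactly the expected number of \emph{directly} forced children of the root of each type $(c,\theta)$ — the factor $(d_1+d_2+d_3)^\alpha$ coming from the size-biased Step~3 choice, the division by $2\sum(d'_1+d'_2+d'_3)^\alpha\vU_{\theta'}(t)$ normalising over all list-size-$2$ vertices, and the sum over $\chi\in\{1,2,3\}\setminus\{c\}$ of $\vU_{\chi,\theta}(t)$ counting those type-$\theta$ neighbours whose missing colour forces them to $c$. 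The matrix $\vM(t)$ from~\eqref{eqMmatrix} then records the expected number of grandchildren: a child of colour $c'$ and type $\theta'$, reached along an edge, has (in expectation) $d_{s'}(d'_s+1)\vU_{c'',(\theta')^{+s}}(t)/\vE_{s,s'}(t)$ further forced neighbours of colour $c''=\{1,2,3\}\setminus\{c,c'\}$ and the appropriate type — the $(d'_s+1)$ and the $^{+s}$ bookkeeping the edge back to the parent, and $\vE_{s,s'}(t)$ being the normalising number of $s$--$s'$ half-edge pairs from~\eqref{eqEss'}. Summing the geometric series of the branching process, the expected number of forced vertices of each colour-type is $\big((\id-\vM(t))^{-1}\vp(t)\big)_{(c,\theta)}$, which converges because $\|\vM(t)\|<1-\eps$ by hypothesis~\eqref{eq_cond_prop_condex}. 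Finally, each such vertex of type $(c,s,d_1,d_2,d_3)$ contributes $d_{s'}$ external half-edges landing in planted class $s'$, minus the one used to connect it to its parent; this is precisely the weighted sum in~\eqref{eqvkappacss'} defining $\vec\kappa_{c,s,s'}(t)$.

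The remaining work is to justify the ``$+o(1)$'': that the configuration-model heuristics introduce only lower-order corrections, that the truncated branching process is an honest approximation (the probability that the cascade revisits a vertex, creates a cycle, or produces a bad vertex within $O(1)$ steps is $o(1)$, and conditioned on none of this the one-step expectations are exact up to $1/n$ errors from sampling without replacement among $\Theta(n)$ vertices by~\eqref{eq_cond_prop_condex}), and that the cascade terminates after $O(1)$ steps in expectation (again by $\|\vM(t)\|<1-\eps$, so that tail contributions beyond depth $\Theta(\log n)$ are negligible). I expect the \textbf{main obstacle} to be precisely this last point: controlling the interplay between the exploration of $\fT_t$ and the depletion of the pools $\vU_{\chi,\theta}(t),\vW_\theta(t)$, i.e.\ showing that within a single epoch the pool sizes move by $o(n)$ so that the ``frozen'' rates $\vp(t),\vM(t)$ computed from the time-$t$ values are valid throughout the epoch, and simultaneously that the small but positive probability of hitting a bad vertex does not inflate $\ex[\vec K_{c,s,s'}\mid\fD(t)]$ through rare large cascades. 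Both are handled by the sub-criticality estimate $\|\vM(t)\|<1-\eps$ together with a union bound over the $O(1)$ expected cascade size, exactly as in the Achlioptas--Moore analysis~\cite{AchMoore3col}, but the \SBM\ bookkeeping with the extra planted-colour indices is what must be checked with care.
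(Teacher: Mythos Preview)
Your approach is essentially the paper's: couple the forced-colouring cascade $\fT_t$ with a multi-type branching process with types $(c,\theta)$, use Fact~\ref{prop_deferred} to compute one-step transition rates, invoke sub-criticality $\|\vM(t)\|<1-\eps$ to sum the geometric series $(\id-\vM(t))^{-1}\vp(t)$ for the total progeny, and weight by $d_{s'}$.

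However, your reading of $\vp(t)$ is wrong, and if carried through literally it would produce an incorrect formula. The vector $\vp(t)$ is \emph{not} the expected number of children directly forced by the root; it is the \emph{law of the root $\vf_t$ itself}: $\vp_{c,\theta}(t)$ is the probability that the Step~3 vertex $\vf_t$ has type $\theta$ and receives colour $c$. (The $(d_1+d_2+d_3)^\alpha$ is the Step~3 selection weight of $\vf_t$, the denominator normalises over all list-size-$2$ vertices, $\sum_{\chi\neq c}\vU_{\chi,\theta}(t)$ counts type-$\theta$ vertices whose list \emph{contains} $c$, and the $1/2$ is the probability $c$ is then picked from a two-element list; no ``neighbours'' are involved.) Likewise $\vM_{c',\theta',c,\theta}(t)$ is the expected number of \emph{children} of type $(c',\theta')$ begotten by a parent of type $(c,\theta)$, not a grandchild count, and $c''$ is the child's \emph{missing} colour, not its assigned one. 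With the correct interpretation, $\sum_{\ell\geq0}\vM^\ell\vp$ already includes the root at $\ell=0$, so weighting by $d_{s'}$ recovers $\vec\kappa_{c,s,s'}(t)$ directly; there is no separate ``minus the parent edge'' correction, because the convention (encoded in the ${}^{+s}$ inside~\eqref{eqMmatrix}) is that the type $\theta'$ of a non-root vertex already excludes the parent edge. Under your misreading the root's own $d_{s'}$ outgoing edges would be omitted from $\ex[\vK_{c,s,s'}\mid\fD(t)]$.
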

\begin{proof}
	Extending the argument used in~\cite{AchMoore3col} for \Erdos-\Renyi\ random graphs to the \sbm, we construct a multi-type branching process that mimics the directed graph $\fT_t$.
Due to~\eqref{eq_cond_prop_condex} this branching process will turn out to be sub-critical.
As a consequence, \whp\ $\fT_t$ is a tree of bounded size, the subgraph of $\G_\Delta^*(t)$ induced on $\fF_t$ is sub-critical and the conditional mean of $\vK_{c,s,s'}(t)$ can be calculated from the total progeny of the branching process.

The parameters of the branching process depend on $t, \vU_{c, \theta}, \vW_{\theta}$ only, which are $\fD(t)$-measurable.
The types of the branching process are pairs $(c,\theta)$ with $c\in\{1,2,3\}$ and $\theta\in\cT_\Delta$.
The intended semantics is that $c$ indicates the colour $\sigma(u)$ that $\algo$ assigned to the corresponding vertex $u$ of $\fT_t$.
Moreover, $\theta=(s,d_1,d_2,d_3)$ comprises the planted colour $\SIGMA^*(u)=s$ of $u$, and $d_i$ equals the number of neighbours $v\neq\fp(u)$ of $u$ in $\G_\Delta^*(t)$ with planted colour $\SIGMA^*(v)=i$.

The root of the branching process is the vertex $\vf_t$ with two available colours $\cL_{\vf_t}(t)=\{c,c'\}$.
Recalling the choice of $\vf_t$ in Step~3 of $\algo$ and~\eqref{eqpctheta}, we see that the probability that $\vf_t$ has type $(c,\theta)$ equals
\begin{align}\nonumber
	%\vp_{c,(s,d_1,d_2,d_3)} =& 
	\Pr&\brk{\sigma(\vf_t) = c, \SIGMA^*(\vf_t)=s, \vd_{\vf_t,1}(t) = d_1, \vd_{\vf_t,2} (t)= d_2, \vd_{\vf_t,3} (t)= d_3 \mid \fD_t}\\ &= \frac{(d_1+d_2+d_3)^\alpha \sum_{\chi \in \set{1,2,3}\setminus\set{c}}\vU_{\chi,\theta}(t)}{\sum_{c'} \sum_{\theta' \in \cT_\Delta } \sum_{c''\in \set{1,2,3}\setminus \set{c'}}(d_1'+d_2'+d_3')^\alpha \vU_{c'',\theta'}(t)}
	=  \frac 1 2 \frac{(d_1+d_2+d_3)^\alpha\sum_{\chi\in\{1,2,3\}\setminus\{c\}} \vU_{\chi,\theta}(t)}{\sum_{\theta'\in\cT_{\Delta}}(d_1'+d_2'+d_3')^\alpha\vU_{\theta'}(t)}=\vp_{c,\theta}(t).\label{eq_lem_kappa_1}
\end{align}

%encoded in $\fD_t$ and offspring distribution $\vp$.
%The type $(c,\theta)$ models colour $c$ assigned to a vertex of type $\theta$. 
%Initially, the vertex $\vf_t$ with two allowed colours $\cL_{\vf_t} = \set{c,c'}$ and type $\theta$ is chosen with probability proportional to $(d_1+d_2+d_3)^\alpha$ in accordance to step 3 of \ref{alg_A}.
%This vertex is the assigned a random colour of $\cL_{\vf_t}$.
%
%There are in total $\sum_{\chi \in \set{1,2,3}\setminus\set{c}} \vU_{\chi,\theta}$ vertices that could be chosen and lead to an $c,\theta$ root in $\cT_\Delta$.
%This leads to
%\begin{align*}
%	\vp_{c,(s,d_1,d_2,d_3)} =& \Pr\brk{\sigma(\vf_t) = c, \sigma^*(\vf_t)=s, \vd_{\vf_t,1} = d_1, \vd_{\vf_t,2} = d_2, \vd_{\vf_t,3} = d_3 \mid \fD_t}\\
%	= & \frac{(d_1+d_2+d+3)^\alpha \sum_{\chi \in \set{1,2,3}\setminus\set{c}}\vU_{\chi,\theta}(t)}{\sum_{c'} \sum_{\theta' \in \cT_\Delta } \sum_{c''\in \set{1,2,3}\setminus \set{c'}}(d_1'+d_2'+d+3')^\alpha \vU_{c'',\theta'}(t)}\\
%	= & \frac 1 2 \frac{(d_1+d_2+d_3)^\alpha\sum_{\chi\in\{1,2,3\}\setminus\{c\}} \vU_{\chi,\theta}(t)}{\sum_{\theta'\in\cT_{\Delta}}(d_1'+d_2'+d_3')^\alpha\vU_{\theta'}(t)}
%\end{align*}

As a next step we determine the offspring distribution of the branching process.
Specifically, given that the root vertex $\vf_t$ has type $(c,\theta)$, we are going to compute the mean $\mu_{c',\theta',c,\theta}$ of the number of $\vX_{c',\theta'}$ of children of $\vf_t$ of type $(c',\theta')$.
Thus, $\mu_{c',\theta',c,\theta}$ is the expected number of vertices $v\in\partial\vf_t$ of type $\theta'$ that had two colours in their list $\cL_v(t)$ before $\vf_t$ got assigned colour $c$, but that pursuant to this decision have only a single colour left in their list.
Recall that the type $\theta' = (s', d_1', d_2', d_3')$ of the children discounts the edge that joins the child to the parent.
In particular, the total number of neighbours of $v$ with planted colour $s$ in $\G_\Delta^*(t)$ equals $d_s'+1$.
Therefore, invoking Fact~\ref{prop_deferred} (the principle of deferred decisions), we claim that
\begin{align}\label{eq_lem_kappa_2}
	\mu_{c',\theta',c,\theta}&\sim\frac{\vecone\{c\neq c'\}{d_{s'} (d_{s}'+1) \vU_{c'',{\theta'}^{+s}}(t)}}{\sum_{v\in V(t):\SIGMA^*(v)=s'}\vd_{v,s}(t)}&&(c''\in\{1,2,3\}\setminus\{c,c'\}).
\end{align}
Indeed, since $\G_\Delta^*(t)$ is uniformly random given $\fD(t)$, given that $\vf_t$ has $s'$ neighbours with planted colour $s'$ the total expected number of possible edges between $\vf_t$ and vertices of type $(c',{\theta'}^{+s})$ equals $d_{s'} (d_{s}'+1) \vU_{c'',{\theta'}^{+s}}(t)$.
Moreover, to obtain the expected number of neighbours we need to normalise by the total number of edges $xy$ with $\SIGMA^*_x=s$ and $\SIGMA^*_y=s'$, which explains the denominator in~\eqref{eq_lem_kappa_2}.

We notice that the asymptotic formula~\eqref{eq_lem_kappa_2} yields the expected offspring not only of the root $\vf_t$, but also of any vertex $u$ that gets coloured during the first $o(n)$ forced steps of epoch $t$.
For while strictly speaking we should discount the previously revealed edges when computing the expected offspring of $u$, the total number of such edges is only of order $o(n)$.
Hence, under assumption \eqref{eq_cond_prop_condex} the expectation remains asymptotically the same.
Furthermore, recalling~\eqref{eqEss'}--\eqref{eqMmatrix}, we see that $\mu_{c',\theta',c,\theta}=\vM_{c',\theta',c,\theta}(t)$.

In summary, the first $o(n)$ forced colourings during epoch $t$ can be coupled with a branching process with root type distribution $\vp(t)$ and offspring matrix $\vM(t)$.
Since \eqref{eq_cond_prop_condex} ensures that $\vM(t)$ has spectral radius strictly less than one \whp, the branching process is sub-critical \whp.
Therefore, so is the process of pursuing forced colourings in epoch $t$.
In particular, the sub-graph $\fT_t$ has bounded expected size, and is therefore acyclic \whp\
Finally, the standard formula for the total progeny of a branching process yields
\begin{align*}
	\ex[\vK_{c,s,s'}\mid\fD(t)]&=\sum_{d_1,d_2,d_3=0}^{\Delta} d_{s'}\vecone\{(s,d_1,d_2,d_3)\in\cT_\Delta\} \sum_{\ell=0}^\infty (\vM^\ell(t) \vp(t))_{(c, (s, d_1, d_2, d_3))}\\
							   &=\sum_{d_1,d_2,d_3=0}^\Delta d_{s'}\vecone\{(s,d_1,d_2,d_3)\in\cT_\Delta\}\left(\left(\id -\vM(t) \right)^{-1}\vec p(t)\right)_{(c, (s, d_1 , d_2, d_3))}=\vec\kappa_{c, s, s'}(t), 
\end{align*}
as claimed.
\end{proof}

\begin{proof}[Proof of \Prop~\ref{prop_condex}]
	As a first step we verify the formula~\eqref{eq_prop_condex1} for the expected change of the number of `white' vertices of type $\theta=(s,d_1,d_2,d_3)$.
	Clearly, $\vW_{\theta}(\nix)$ is monotonically decreasing in $t$: the only possible changes ensue because some `white' vertices $w$ of type $\theta$ lose one of their available colours because a neighbour $v\in\partial w$ gets assigned a colour $\sigma(w)$ in epoch $t$ (either because $w=\vf_t$ or because of a forced assignment).
Hence, we just need to calculate the expected total number of white vertices of type $\theta$ that have a neighbour in $\fT_t$.
This is easy.
Indeed, the total number of edges between vertices with planted colour $s'$ in $\fT_t$ and vertices $w$ with planted colour $s$ equals $\sum_{c=1}^3\vK_{c,s,s'}$.
Furthermore, if $w$ has type $\theta$, then the number of edges between $w$ and the set of vertices with planted colour $s'$ equals $d_{s'}$. 
Hence, Fact~\ref{prop_deferred}, \Lem~\ref{lem_kappa} and~\eqref{eqkappacss'}--\eqref{eqkappass'} yield
\begin{align}\label{eq_prop_condex_0}
	\ex\brk{\vW_\theta(t+1)-\vW_\theta(t)\mid\fD_t}&
	=-\sum_{s'=1}^3\sum_{c=1}^3d_{s'}\vW_\theta(t)\ex\brk{\vK_{c,s,s'}(t)\mid\fD_t}
	=- \sum_{s'=1}^3  \frac{d_{s'} \vec\kappa_{s',s}(t)\vW_\theta(t)}{\vE_{s,s'}(t)}+o(1).
\end{align}

Next we calculate the expected change of $\vU_{c, \theta}(\nix)$ for some colour $c\in\{1,2,3\}$ and type $\theta = (s, d_1, d_2, d_3)\in\cT_\Delta$.
When epoch $t$ commenced these vertices had allowed colours $\set{1,2,3}\setminus\set{c}$.
The random variable $\vU_{c,\theta}(\nix)$ can decrease for one of two reasons.
First, it could be that $\vf_t$ counted towards $\vU_{c,\theta}(t)$.
Step~3 of $\algo$ ensures that the probability of this event equals
\begin{align}\label{eq_prop_condex_1}
	- \frac{(d_1+d_2+d_3)^\alpha \vU_{c,\theta}(t)}{\sum_{\theta'\in\cT_\Delta}(d_1'+d_2'+d_3')^\alpha \vU_{\theta'}(t)}.
\end{align}
Second, it could be that a vertex $v$ that had type $\theta$ and list $\cL_v(t)=\{1,2,3\}\setminus\{c\}$ at the beginning of epoch $t$ is incident with an edge emanating from the tree $\fT_t$.
Similar reasoning as towards~\eqref{eq_prop_condex_0} shows that this occurs with probability
\begin{align}\label{eq_prop_condex_2}
	-\sum_{s'=1}^3 \frac{\vec\kappa_{c, s',s} d_{s'}\vU_{c,\theta}(t) }{\vE_{s,s'}(t)}.
\end{align}

Finally, by contrast to $\vW_\theta(\nix)$, the random variables $\vU_{c,\theta}(\nix)$ might also increase.
This may happen in one of two ways.
First, a vertex $w$ that was white at the beginning of epoch $t$ and that is incident to an edge emerging from $\fT_t$ may turn into a vertex of type $\theta$ with list $\cL_v(t+1)=\{1,2,3\}\setminus\{c\}$.
In that case $w$ was of type $\theta^{+s'}$ at the beginning of epoch $t$ and is adjacent to a vertex $v$ of $\fT_t$ of planted colour $s'$.
Fact~\ref{prop_deferred} and \Lem~\ref{lem_kappa} show that the expected ensuing increase comes to
\begin{align}\label{eq_prop_condex_3}
	\sum_{s'=1}^3 \frac{\vec\kappa_{c, s',s}(d_{s'} + 1)\vW_{\theta^{+s'}}(t)  }{\vE_{s,s'}(t)}+o(1).
\end{align}
A further possibility is that a vertex $u$ with $|\cL_{u}|=2$ and type $\theta^{+s'}$ is incident with an edge emanating from a vertex $v$ of planted colour $s'$ of $\fT_t$.
In analogy to~\eqref{eq_prop_condex_3} the expected ensuing increase reads
\begin{align}\label{eq_prop_condex_4}
	\sum_{s'=1}^3 \frac{\vec\kappa_{c, s',s}(d_{s'} + 1)\vU_{\theta^{+s'}}(t)  }{\vE_{s,s'}(t)}+o(1).
\end{align}
Combining~\eqref{eq_prop_condex_1}--\eqref{eq_prop_condex_4}, we obtain~\eqref{eq_prop_condex2}.

Finally, we need to estimate the number of bad vertices.
Every bad vertex that emerges in an epoch has to have had at least two colours removed from its list. 
Thus, every bad vertex lies on a cycle of $\fT_t$. 
Since \eqref{eq_cond_prop_condex} ensures that $\|\vM(t)\|<1-\eps$ while at least a linear number $\Omega(n)$ of uncoloured vertices remain, in every epoch $t$ probability that $\fT_t$ contains a cycle is of order $O(1/n)$.
Consequently, the expected number of bad vertices is $O(1)$.
\end{proof}

\section{Proof of \Prop~\ref{prop_flat}}\label{sec_prop_flat}

\noindent
We begin by verifying that \eqref{eq_prop_diffeq} is satisfied.
To this end, for $\theta=(s,d_1,d_2,d_3)\in\cT_\Delta$ and $c',s_1',s_2'\in\{1,2,3\}$ define
\begin{align*}
	M^V_{(c',s_1',s_2'),\theta}=&\vecone\cbc{c=c',\,s=s_1'} d_{s_2'},&
	M^E_{\theta,(c',s_1',s_2')}=& \vecone\cbc{c\neq c',\,s=s_2'} \frac{(d_{s_1'}+1)u_{c,\theta^{+{s_1'}}}}{e_{s_1',s_2'}}.
\end{align*}
Then we obtain $27\times|\cT_\Delta|$- and $|\cT_\Delta|\times27$-matrices
\begin{align}\label{eqMEMV}
	M^V&=(M^V_{(c',s_1',s_2'),\theta})_{c',s_1',s_2'\in\{1,2,3\};\,\theta\in\cT_\Delta},&
	M^E&=(M^E_{\theta,(c',s_1',s_2')})_{\theta\in\cT_\Delta;\,c',s_1',s_2'\in\{1,2,3\}}.
\end{align}
Recalling the matrix from~\eqref{eqMmatrix} and performing a trite computation, we discover that
\begin{align}\label{eqMdecomp}
	M=M(t) = M^E M^V.
\end{align}
Finally, for $\ell\geq0$ we introduce the shorthands
\begin{align*}
	u^{(\ell)}&= \sum_{i=0}^{\Delta} i^\ell u_i,& w^{(\ell)}&= \sum_{i=0}^{\Delta} i^\ell w_i.
\end{align*}

\begin{lemma}\label{lem_eigenvector}
	Under Assumption~\ref{assumption_ode}	we have $\|M\|<1-\xi$ for all $t\in[0,t^*+\xi]$.
\end{lemma}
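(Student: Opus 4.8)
The plan is to exploit the factorisation $M = M^E M^V$ from~\eqref{eqMdecomp} to replace the unwieldy $(3|\cT_\Delta|)\times(3|\cT_\Delta|)$ matrix $M$ by the much smaller $27\times27$ matrix $M^V M^E$. Since $M^E M^V$ and $M^V M^E$ have the same non-zero spectrum, it suffices to bound $\|M^V M^E\|$, or rather its spectral radius, and then upgrade to a bound on $\|M\|$ using that $M$ is a non-negative matrix (so one can pass from spectral radius to operator norm via a Perron–Frobenius/right-eigenvector comparison, or simply bound $\|M^\ell\| $ and take $\ell$-th roots). First I would substitute the flat-white ansatz~\eqref{eq_prop_flat_w}--\eqref{eq_prop_flat_u} into the definitions of $e_{s,s'}$, $M^V$, $M^E$ and carry out the summations over $\theta\in\cT_\Delta$. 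Because in the flat-white solution $w_\theta$ and $u_{c,\theta}$ factor as a multinomial in $(d_1,d_2,d_3)$ times the degree-$(d_1+d_2+d_3)$ weight $w_{d_1+d_2+d_3}/3$ resp.\ $u_{d_1+d_2+d_3}/9$, these sums collapse: summing a multinomial coefficient against the edge-probability weights $\bc{\eul^{-\beta\vecone\{s=s'\}}/(2+\eul^{-\beta})}^{d_{s'}}$ just reproduces the one-dimensional functions $u^{(\ell)},w^{(\ell)}$ together with explicit $\beta$-dependent constants coming from~\eqref{eqdindout}. In particular $e_{s,s'}$ should come out proportional to $\din$ when $s=s'$ and to $\dout$ when $s\neq s'$, times $(u^{(1)}+w^{(1)})/3$ or a similar combination.

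The next step is to identify the resulting $27\times27$ matrix. The index $(c',s_1',s_2')$ has a product structure, and I expect $M^V M^E$ to factor as a tensor/Kronecker product of a $3\times3$ block acting on the colour coordinate $c'$ (essentially the $2\times2$-colour-list combinatorics padded out, giving the factor $\tfrac12$ and the $\vecone\{c\neq c'\}$ structure that already appears in the Achlioptas–Moore analysis) and a $3\times3$ (or $9\times9$) block acting on the planted-colour coordinates $(s_1',s_2')$, which by symmetry of the SBM is a circulant/rank-structured matrix whose entries are the ratios $(\din\text{ or }\dout)/e_{s,s'}$ times $u^{(2)}/u^{(1)}$-type quantities. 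Diagonalising these small structured blocks explicitly, the largest eigenvalue of $M$ should reduce to exactly the quantity controlled by~\eqref{eqODElambda}, namely $\dfrac{\sum_i i(i-1)u_i}{\sum_i i(w_i+u_i)} = \dfrac{u^{(2)}-u^{(1)}}{u^{(1)}+w^{(1)}}$, possibly up to a universal prefactor that the colour block contributes. The point of the $\beta$-dependent constants from the previous paragraph is that they cancel: the $\din,\dout$ appearing in the numerators via $M^V$ are exactly cancelled by those in $e_{s,s'}$ in the denominators, which is the structural reason the SBM analysis reduces to the same ODE system~\eqref{eqODEinitial}--\eqref{eqkappa} as the \Erdos--\Renyi\ case. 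Thus condition~\eqref{eqODElambda}, which bounds this ratio by $\tfrac32-\xi$, translates into $\|M\|<1-\xi'$ for some $\xi'>0$ depending on $\xi$ and $\Delta_0$; the factor $\tfrac32$ is precisely the reciprocal of the colour-block eigenvalue, so the threshold is sharp. Finally one uses~\eqref{eqODEpos}, which guarantees $u_\ell,w_\ell>\xi$ hence $e_{s,s'}$ is bounded away from $0$, so all the ratios are well defined and the $o(1)$/continuity arguments go through uniformly on $[0,t^*+\xi]$.

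The main obstacle I anticipate is the bookkeeping in the second step: correctly collapsing the $\theta$-sums so that the $27\times27$ reduction is clean, and in particular pinning down the exact constant contributed by the colour-coordinate block so that the numerical threshold $\tfrac32$ in~\eqref{eqODElambda} is matched exactly rather than up to an unidentified constant. This is essentially the same computation Achlioptas and Moore perform to derive $\kappa$ in~\eqref{eqkappa} — indeed $\kappa$ in~\eqref{eqkappa} is what one gets after inverting $(\id - M)$ — so I would cross-check the reduced matrix against their formula: the flat-white substitution must turn~\eqref{eqkappacss'}--\eqref{eqkappass'} back into~\eqref{eqkappa}, and the condition $\|M\|<1$ is exactly the condition that the geometric series $\sum_\ell M^\ell$ defining $\kappa$ converges. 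A secondary, milder technical point is passing from the spectral radius of the non-negative matrix $M$ to its spectral norm $\|M\|$; here I would either invoke that for the relevant comparison only $\|M^\ell\|^{1/\ell}\to\rho(M)$ matters (and the conditions~\eqref{eq_cond_prop_condex}, \eqref{eq_prop_diffeq} are stable under shrinking $\xi$), or note that after the Kronecker-product identification $M$ is similar to a matrix that is small enough in a weighted $\ell_\infty$ norm, which directly bounds the operator norm.
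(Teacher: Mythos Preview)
Your proposal is correct and follows essentially the same route as the paper: reduce via the factorisation $M=M^EM^V$ to the $27\times27$ matrix $M^VM^E$, substitute the flat-white ansatz so that the $\theta$-sums collapse and the $\beta$-dependent constants cancel, identify the resulting matrix as a Kronecker product whose norm equals $\tfrac{2(u^{(2)}-u^{(1)})}{3(u^{(1)}+w^{(1)})}$, and invoke~\eqref{eqODElambda}. The paper handles the passage from spectral radius to $\|M\|$ exactly as in your second suggestion, via Perron--Frobenius for the non-negative irreducible $M$ together with the sub-multiplicativity estimate $\|M^\ell\|\leq\|M^E\|\cdot\|M^VM^E\|^{\ell-1}$ for large $\ell$.
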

\begin{proof}
	The matrix $M$ is non-negative and irreducible.
	The Perron-Frobenius theorem therefore shows that $M$ has a single dominant eigenvalue $\lambda_1(M)>0$ with $\|M\|=\lambda_1(M)$.
	Thus, we need to show that $\lambda_1(M)<1-\xi$.
	To this end, we observe that $\|M^\ell\|=\lambda_1(M)^\ell$ for every $\ell\geq1$.
	Hence, to show that $\lambda_1(M)<1-\xi$ for a small $\xi=\Omega(1)$ it suffices to prove that $\|M^\ell\|<1-\Omega(1)$ for large enough $\ell$.
	In order to bound $\|M^\ell\|$ we use the decomposition~\eqref{eqMdecomp}.
	Specifically, using \eqref{eqMdecomp} and the sub-multiplicativity of the matrix norm, we obtain
	\begin{align*}
		\|M^\ell\|=\|(M^EM^V)^\ell\|\leq\|M^E\|\cdot\|M^VM^E\|^{\ell-1}.
	\end{align*}
	Hence, taking $\ell$ large, we see that it suffices to prove that $\|M^VM^E\|<1-\Omega(1)$.

	To this end, we compute the entries of $M^VM^E$.
	Recalling $e_{s,s'}$ from~\eqref{eqess'}, we first compute
	\begin{align}\label{eq_prop_flat_1}
		e_{s,s'}=&\sum_{d'_1, d'_2, d'_3}\vecone\{\theta' \in\cT_\Delta\}d'_s\bc{w_{(s', d'_1, d'_2, d'_3)}+\sum_{c'=1}^3u_{c,(s',d'_1, d'_2, d'_3)}} =\frac 1 3  \frac{\eul^{-\beta \vecone\brk{s=s'}}}{2+\eul^{-\beta}} \sum_{\ell=1}^\Delta \bc{ w_{\ell}  + u_{\ell}} \ell . 
	\end{align}
	Hence, for $(c,s_1,s_2),(c',s_1',s_2')\in\{1,2,3\}^3$ we obtain 
	\begin{align}\nonumber
		(M^V M^E)_{(c',s_1', s_2')(c, s_1, s_2)} = 
		&\vecone\cbc{c\neq c'} \vecone\cbc{s_2 = s_1'}  \sum_{d_1,d_2,d_3=0}^{\Delta} \vecone\{\theta=(s_2,d_1,d_2,d_3)\in\cT_\Delta\} \frac{(d_{s_1}+1)d_{s_2'}}{e_{s_1,s_2}} u_{c,\theta^{+s_1}}\\
		=& \vecone\cbc{c\neq c'} \vecone\cbc{s_2 = s_1'} \frac{\mom u 2 -\mom u 1}{3(\mom u 1 + \mom w 1)} \frac{\eul^{-\beta\vecone\cbc{s_1'=s_2'}}}{2+\eul^{-\beta}} \label{eq:MVME}.
	\end{align}
	Due to the product structure of the expression~\eqref{eq:MVME} we can write $M^VM^E$ as a tensor product.
	Thus, writing $\mathbf 1$ for the $3\times 3$-matrix with all entries equal to one, we obtain
	\begin{align}
		\|M^VM^E\| %\abs{\frac{(u^{(2)} - u^{(1)}) }{3(u^{(1)} - w^{(1)}) (2+\eul^{-\beta}) }} \norm{\left(\mathbf1 - \id\right) \tensor \left(\eul^{-\beta \vecone\brk{b_1 = b_2}} \vecone\brk{s_2 = b_1} \right)}\\ %_{(b_1, b_2)(s_1, s_2)}\\
		= &\abs{ \frac{u^{(2)} - u^{(1)} }{3(u^{(1)} - w^{(1)}) (2+\eul^{-\beta}) }}\cdot\norm{ \left(\mathbf1 - \id\right) \tensor \id \tensor 
		\bc{\begin{matrix}
				\eul^{-\beta} &  \eul^{-\beta} &  \eul^{-\beta} \\
				1 & 1 & 1 \\
				1 & 1 & 1 \\
\end{matrix}}}=\frac{2(u^{(2)} - u^{(1)}) }{3(u^{(1)} + w^{(1)})}.
\label{eqMVMEeig}
	\end{align}
	Finally, Assumption~\ref{assumption_ode} ensures that the r.h.s.\ of \eqref{eqMVMEeig} is bounded away from one.
\end{proof}

\begin{corollary}\label{cor_eigenvector}
	If Assumption~\ref{assumption_ode} holds, then \eqref{eq_prop_diffeq} is satisfied.
\end{corollary}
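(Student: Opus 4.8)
The plan is to verify the two clauses of \eqref{eq_prop_diffeq} in turn, relying on the explicit form \eqref{eq_prop_flat_w}--\eqref{eq_prop_flat_u} of the flat-white functions together with the hypotheses collected in Assumption~\ref{assumption_ode}; neither clause poses a genuine obstacle. For the spectral-norm bound there is nothing left to prove: \Lem~\ref{lem_eigenvector} already gives $\|M(t)\|<1-\xi$ for every $t\in[0,t^*+\xi]$, so any choice $\eps\le\xi$ makes $[0,t^*+\eps]\subseteq[0,t^*+\xi]$ and the second part of \eqref{eq_prop_diffeq} holds with room to spare.

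For the positivity clause I would read off a uniform lower bound directly from \eqref{eq_prop_flat_w}--\eqref{eq_prop_flat_u}. Fix $\theta=(s,d_1,d_2,d_3)\in\cT_\Delta$. The multinomial coefficient $\binom{d_1+d_2+d_3}{d_1,d_2,d_3}$ is at least $1$, and since $d_1+d_2+d_3\le\Delta$ and $\eul^{-\beta}/(2+\eul^{-\beta})$ is the smallest of the bases appearing, the product $\prod_{s'}\bc{\eul^{-\beta\vecone\{s=s'\}}/(2+\eul^{-\beta})}^{d_{s'}}$ is bounded below by $\bc{\eul^{-\beta}/(2+\eul^{-\beta})}^\Delta$, a strictly positive constant depending only on $\beta$ and $\Delta$. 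Combining this with the bound $u_\ell(t),w_\ell(t)>\xi$ from \eqref{eqODEpos}, valid for all $0\le\ell\le\Delta$ and all $t\in[0,t^*+\xi]$, yields
\[
	w_\theta(t)\ \ge\ \frac\xi3\bc{\frac{\eul^{-\beta}}{2+\eul^{-\beta}}}^{\!\Delta},\qquad
	u_{c,\theta}(t)\ \ge\ \frac\xi9\bc{\frac{\eul^{-\beta}}{2+\eul^{-\beta}}}^{\!\Delta}
\]
for every $c\in\{1,2,3\}$, $\theta\in\cT_\Delta$ and $t\in[0,t^*+\xi]$. Taking $\eps$ to be the minimum of $\xi$ and the right-hand side of the second inequality then delivers both parts of \eqref{eq_prop_diffeq} simultaneously.

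The remaining points are purely bookkeeping. First, $\Delta$ must be chosen at least as large as the threshold $\Delta_0(d)$ from \Prop~\ref{prop_high} (equivalently the $\Delta_0$ of Assumption~\ref{assumption_ode}), so that the functions $u_\ell,w_\ell$ — and hence the flat-white functions $w_\theta,u_{c,\theta}$ — are defined on $[0,t^*+\xi]$ at all; once $d$ is fixed this is just the selection of a constant. Second, the lower bound above decays like $(\eul^{-\beta}/(2+\eul^{-\beta}))^\Delta$ as $\Delta$ grows, but this is harmless because $\Delta$ and $\beta$ are fixed once $d$ is. In short, this corollary is the step that repackages the hand-tuned analytic hypotheses of Assumption~\ref{assumption_ode} — uniform positivity \eqref{eqODEpos} and the Perron-value bound \eqref{eqODElambda}, the latter already digested through \Lem~\ref{lem_eigenvector} — into exactly the shape required by Fact~\ref{prop_diffeq}, and I do not anticipate any hard part.
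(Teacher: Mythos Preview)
Your argument is correct and follows the same route as the paper: the spectral bound is \Lem~\ref{lem_eigenvector}, and the positivity of $w_\theta,u_{c,\theta}$ is read off from the explicit formulas \eqref{eq_prop_flat_w}--\eqref{eq_prop_flat_u} together with the lower bound \eqref{eqODEpos} on $u_\ell,w_\ell$. (The paper's proof cites \eqref{eqODEend} for the first clause, but this is evidently a slip for \eqref{eqODEpos}; your version is the intended one, only with the constants written out.)
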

\begin{proof}
	The first condition is a direct consequence of the definitions~\eqref{eq_prop_flat_w}--\eqref{eq_prop_flat_u} of the functions $w,u$ and \eqref{eqODEend}.
	The second condition follows from \Lem~\ref{lem_eigenvector}.
\end{proof}

As a next step we are going to compute the quantities $\kappa_{c,s,s'}$ and $\kappa_{s,s'}$ from~\eqref{eqkappacss'}--\eqref{eqkappass'}.
Let
\begin{align}\label{eqkappalambda}
		\kappa&=\frac {3 (\mom u 1 + \mom w 1 ) \mom u {1+\alpha} }{(5 \mom u 1 + 3 \mom w 1 - 2 \mom u 2)\mom u \alpha}>0,&
		0<\lambda = & \frac{2(u^{(2)} - u^{(1)}) }{3(u^{(1)} + w^{(1)})}<1.
	\end{align}

\begin{lemma}\label{lemma_kappa}
	If Assumption~\ref{assumption_ode} holds, then for all $c,s,s'\in\{1,2,3\}$ we have
 \begin{align*}
	 \kappa_{c,s,s'}&= \frac\kappa9\cdot\frac{\exp\bc{-\beta \vecone\cbc{s = s'}}} {2+\exp\bc{-\beta}},&
	 \kappa_{s,s'}&= \frac\kappa3\cdot\frac{\exp\bc{-\beta \vecone\cbc{s = s'}}} {2+\exp\bc{-\beta}}.
 \end{align*}
\end{lemma}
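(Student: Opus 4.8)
The plan is to evaluate the resolvent $(\id-M)^{-1}p=\sum_{\ell\ge0}M^\ell p$ from \eqref{eqkappacss'} directly on the flat-white solution, exploiting that every quantity in sight factorises over the neighbour-colour profile $(d_1,d_2,d_3)$. Throughout write $\delta=d_1+d_2+d_3$ and $q_{s,s'}=\exp(-\beta\vecone\{s=s'\})/(2+\exp(-\beta))$, and note that $q$ is symmetric and doubly stochastic. Substituting \eqref{eq_prop_flat_w}--\eqref{eq_prop_flat_u} into \eqref{eqpctheta} and \eqref{eqess'} gives, for $\theta=(s,d_1,d_2,d_3)\in\cT_\Delta$,
\[
	p_{c,\theta}=\frac{\delta^\alpha u_\delta}{9\,u^{(\alpha)}}\binom{\delta}{d_1,d_2,d_3}\prod_{j=1}^3 q_{s,j}^{\,d_j},\qquad
	e_{s,s'}=\tfrac{1}{3}\,q_{s,s'}\bigl(u^{(1)}+w^{(1)}\bigr),
\]
the second identity being already recorded in \eqref{eq_prop_flat_1}; in particular $p_{c,\theta}$ is independent of $c$ and depends on $s$ only through the multinomial weight.

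The main step is to show that this product structure survives $M$. I would prove by induction on $\ell$ that $\bigl(M^\ell p\bigr)_{c,\theta}=\phi^{(\ell)}(\delta)\binom{\delta}{d_1,d_2,d_3}\prod_{j=1}^3 q_{s,j}^{\,d_j}$ for scalars $\phi^{(\ell)}(\delta)$ that depend neither on the list colour $c$ nor, beyond the multinomial factor, on the planted colour $s$. In the inductive step one applies $M$ as in \eqref{eqMmatrix}: the indicator $\vecone\{c\ne c'\}$ together with the $c$-independence of the flat-white $u_{c'',\,\cdot}$ yields a factor $2$ and removes the $c$-dependence; summing $d_{s'}\binom{\delta}{d_1,d_2,d_3}\prod_j q_{s,j}^{\,d_j}$ over profiles of fixed total degree $\delta$ gives $\delta\,q_{s,s'}$ (the mean of a multinomial coordinate), and this $q_{s,s'}$ cancels the $q_{s,s'}$ in the denominator $e_{s,s'}$; the shift $\theta'^{+s}$ turns $\binom{\delta'+1}{\cdots}$ into $\tfrac{\delta'+1}{d'_s+1}\binom{\delta'}{\cdots}$ and inserts a factor $q_{s',s}$; and the residual sum over the parent's planted colour $s$ collapses by $\sum_s q_{s',s}=1$. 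This produces the scalar recursion $\phi^{(\ell+1)}(\delta)=\frac{2(\delta+1)u_{\delta+1}}{3(u^{(1)}+w^{(1)})}S^{(\ell)}$ with $S^{(\ell)}:=\sum_{\delta\ge0}\delta\,\phi^{(\ell)}(\delta)$, whence $S^{(\ell+1)}=\frac{2(u^{(2)}-u^{(1)})}{3(u^{(1)}+w^{(1)})}S^{(\ell)}=\lambda S^{(\ell)}$ using $\sum_{\delta\ge0}\delta(\delta+1)u_{\delta+1}=u^{(2)}-u^{(1)}$, with $\lambda$ as in \eqref{eqkappalambda}.

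Now assemble. By \Lem~\ref{lem_eigenvector} we have $\lambda<1$, so the Neumann series converges and $\sum_{\ell\ge0}S^{(\ell)}=S^{(0)}/(1-\lambda)$. Since \eqref{eqkappacss'} reads $\kappa_{c,s,s'}=\sum_{d_1,d_2,d_3:\,(s,d_1,d_2,d_3)\in\cT_\Delta}d_{s'}\,\bigl((\id-M)^{-1}p\bigr)_{(c,\,(s,d_1,d_2,d_3))}$, summing the product form over profiles once more gives $\kappa_{c,s,s'}=q_{s,s'}\sum_{\ell\ge0}S^{(\ell)}=q_{s,s'}\,S^{(0)}/(1-\lambda)$. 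Reading off $S^{(0)}=u^{(\alpha+1)}/(9\,u^{(\alpha)})$ and $1-\lambda=(5u^{(1)}+3w^{(1)}-2u^{(2)})/(3(u^{(1)}+w^{(1)}))$ and comparing with the definition of $\kappa$ in \eqref{eqkappalambda} gives $S^{(0)}/(1-\lambda)=\kappa/9$, which is the claimed expression for $\kappa_{c,s,s'}$; summing over $c\in\{1,2,3\}$ and using \eqref{eqkappass'} yields $\kappa_{s,s'}=\tfrac{\kappa}{3}\,q_{s,s'}$.

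The hard part is the inductive step: tracking the multinomial coefficients, the shift $\theta'^{+s}$, and the three colour roles $c,c',c''$ simultaneously, and in particular verifying that all genuine dependence on the planted colour cancels — this is exactly where the symmetry and double stochasticity of $q$ (equivalently, the flat-white independence of the colour lists from the planted colouring) are used. An alternative that avoids the induction is to substitute the explicit tensor form of $M^VM^E$ from \eqref{eq:MVME} into $(\id-M^EM^V)^{-1}=\id+M^E(\id-M^VM^E)^{-1}M^V$ and invert the $27\times27$ matrix $\id-M^VM^E$ using its tensor structure, which leads to the same scalar computation.
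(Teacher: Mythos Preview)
Your proposal is correct and amounts to the same computation as the paper's. The paper compresses via the factorisation $M=M^EM^V$ from \eqref{eqMdecomp}: it sets $\hat p=M^Vp$, computes $\hat p_{(c,s_1,s_2)}=\frac{u^{(1+\alpha)}}{9u^{(\alpha)}}q_{s_1,s_2}$ explicitly, checks that $\hat p$ is an eigenvector of the $27\times27$ matrix $M^VM^E$ with eigenvalue $\lambda$, and then writes $(\id-M)^{-1}p=p+\frac{1}{1-\lambda}M^E\hat p$ before taking the $d_{s'}$-weighted sum. Your primary approach stays in the $(3|\cT_\Delta|)$-dimensional space and instead proves inductively that each $M^\ell p$ retains the multinomial product form with a scalar profile $\phi^{(\ell)}(\delta)$; your scalar recursion $S^{(\ell+1)}=\lambda S^{(\ell)}$ is precisely the eigenvalue relation the paper derives for $\hat p$, obtained without first passing through $M^V,M^E$. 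The alternative you sketch at the end (invert $\id-M^VM^E$ using its tensor form) is exactly the paper's route. Both arguments hinge on the same cancellations --- the $c$-independence of the flat-white $u_{c,\theta}$ giving the factor $2$, the multinomial mean producing $\delta q_{s,s'}$ to cancel the $q_{s,s'}$ in $e_{s,s'}$, and double stochasticity of $q$ collapsing the sum over the parent colour --- so there is no genuine difference in content, only in bookkeeping.
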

\begin{proof}
	With $p$ from~\eqref{eqpctheta} and $M^V$ from \eqref{eqMEMV} we let $\hat p=M^Vp$.
	Recalling $M^V M^E$ from \eqref{eq:MVME}, we obtain
	\begin{align}\nonumber
		\hat{p}_{c',s_1, s_2} = & (M^V p)_{c',s_1,s_2} = \sum_{\substack{c\in\{1,2,3\}\\\theta=(s,d_1,d_2,d_3)\in\cT_\Delta}} \frac{\mom u {1+\alpha } }{9 \mom u \alpha }\frac{\eul^{-\beta \vecone\cbc{s_1 = s_2}}}{2+\eul^{-\beta}} \vecone\cbc{c \neq c'}\vecone \cbc{s_2 = s} \frac{(d_{s_1} +1) u_{c'\theta^{+s_1}}}{e_{s_1,s_2}} \\
		=& \frac{\mom u {1+\alpha}}{9 \mom u \alpha} \frac{\eul^{-\beta\vecone\cbc{s_1 = s_2}}}{2+\eul^{-\beta}}\qquad\mbox{[using~\eqref{eq_prop_flat_1}]}.\label{eqlemma_kappa_1}
	\end{align}
	Thanks to~\eqref{eqlemma_kappa_1}, we can now verify easily that $\hat p$ is an eigenvector of $M^V M^E$ with the eigenvalue $\lambda$:
	\begin{align}\nonumber
		\left(M^VM^E\hat p\right)_{(c_2, s_2, s_3)} 
		= & \sum_{c_1, s_1=1}^3\frac{\eul^{-\beta \vecone\cbc{s_1 = s_2}}}{9 u^{(\alpha)} (2+\eul^{-\beta})} \vecone\cbc{c_1 \neq c_2} \frac{u^{(2)} - u^{(1)}}{3(u^{(1)} + w^{(1)})} \frac{\eul^{-\beta \vecone \cbc{s_2 = s_3}}}{2+\eul^{-\beta}}\\
		= & \frac{2 (\mom u 2 - \mom u 1) \mom u {1+\alpha} \eul^{-\beta \vecone \cbc{s_2 = s_3}}}{3 \cdot 9 \mom u \alpha (\mom u 1 + \mom w 1 )(2+\eul^{-\beta})} =  \frac{2(\mom u 2 - \mom u 1)}{3 ( \mom u 1 + \mom w 1)} \hat p_{c_2, s_2, s_3}=\lambda\hat p_{c_2, s_2, s_3}.\label{eqlemma_kappa_2}
	\end{align}
	Combining~\eqref{eqlemma_kappa_2} with the identity $(\id-M)^{-1}=\sum_{\ell\geq0}M^\ell$ and \eqref{eqMdecomp}, we obtain
%We can calculate $\kappa_{c, s',s}$ with the eigenvalue $\lambda$ of $M^V M^E$ and the eigenvector $\hat{p}= M^E p$ as follows
	\begin{align}\label{eqlemma_kappa_3}
		\kappa_{c, s,s'} =& \sum_{d_1,d_2,d_3=0}^{\Delta} \vecone\{(s,d_1,d_2,d_3)\in\cT_\Delta\}  d_{s'} \bc{p + \frac{1}{1-\lambda} M^E \hat{p} }_{c,s, d_1,d_2,d_3}.
	\end{align} 
Before we proceed to calculate $\kappa$, we notice that its constituent parts $(1-\lambda)^{-1}$ and $M^E \hat p$ boil down to
\begin{align*}
	\frac{1}{1-\lambda} &=  \frac{3 (\mom u 1 + \mom w 1)}{ 5 \mom u 1 + 3 \mom w 1 - 2 \mom u 2},\\
	M^E \hat{p}_{c', \theta'} &=  \frac{2}{27} \frac{\mom u {1+\alpha}}{\mom u \alpha}\frac{u_{d_1'+d_2'+d_3'+1} (d_1'+d_2'+d_3''+1)}{\mom u 1 + \mom w 1} \binom{d_1'+d_2'+d_3'}{d_1',d_2',d_3'} \prod_{s''} \bc{\frac{\eul^{-\beta\vecone\brk{s''=s'}}}{2+\eul^{-\beta}}}^{d'_{s''}}.
\end{align*}
Hence,
\begin{align*}
	\kappa_{c, s,s'} = & \sum_{d_1,d_2,d_3=0}^{\Delta} \vecone\{\theta \in\cT_\Delta\} d_{s'} \bc{p + \frac{1}{1-\lambda} M^E \hat{p} }_{c,s, d_1,d_2,d_3}\\
	=& \sum_{d_1,d_2,d_3=0}^{\Delta} \vecone\{\theta \in\cT_\Delta\} d_{s'} p_{c,s, d_1,d_2,d_3} + \sum_{d_1,d_2,d_3=0}^{\Delta} d_{s'} \bc{ \frac{1}{1-\lambda} M^E \hat{p} }_{c,s, d_1,d_2,d_3}\\
	=& \sum_{d_1,d_2,d_3=0}^{\Delta} \vecone\{\theta \in\cT_\Delta\} d_{s'} (d_1+d_2+d_3)^\alpha \frac{u_{d_1+d_2+d_3} }{9 \mom u \alpha} \binom{d_1+d_2+d_3}{d_1,d_2,d_3} \prod_{s''=1}^3 \bc{\frac{\eul^{-\beta\vecone\brk{s=s''}}}{2+\eul^{-\beta}}}^{d_{s''}} \\
	&+  \sum_{d_1,d_2,d_3=0}^{\Delta} \vecone\{\theta \in\cT_\Delta\} d_{s'} \bigg( \frac{3 (\mom u 1 + \mom w 1)}{ 5 \mom u 1 + 3 \mom w 1 - 2 \mom u 2} \frac{2}{27} \frac{\mom u {1+\alpha}}{\mom u \alpha}\frac{u_{d_1+d_2+d_3 + 1} (d_1+d_2+d_3+1)}{\mom u 1 + \mom w 1} \\& \qquad\qquad\binom{d_1+d_2+d_3}{d_1,d_2,d_3} \prod_{s''} \bc{\frac{\eul^{-\beta\vecone\brk{s''=s'}}}{2+\eul^{-\beta}}}^{d_{s''}}\bigg)\\
	=& \frac 1 9 \frac{\eul^{-\beta \vecone\brk{s=s'}}}{2+\eul^{-\beta}}\frac{\mom u {1+\alpha}}{\mom u \alpha}  +  \frac 2 9 \frac{ \mom u {1+\alpha} }{\mom u \alpha } \frac{\eul^{-\beta \vecone\brk{s=s'}}}{2+\eul^{-\beta}} \frac {\mom u 2 - \mom u 1 }{5 \mom u 1 + 3 \mom w 1 - 2 \mom u 2 }
%	\\
%	=& \frac 1 9 \frac{\eul^{-\beta \vecone\brk{s=s'}}}{2+\eul^{-\beta}}\frac{\mom u {1+\alpha}}{\mom u \alpha}  \bc{1 + \frac {2(\mom u 2 - \mom u 1) }{5 \mom u 1 + 3 \mom w 1 - 2 \mom u 2 } }\\
%	= & \frac 1 9 \frac{\eul^{-\beta \vecone\brk{s=s'}}}{2+\eul^{-\beta}}\frac{\mom u {1+\alpha}}{\mom u \alpha}   \frac {3 (\mom u 1 + \mom w 1 )  }{5 \mom u 1 + 3 \mom w 1 - 2 \mom u 2}\\
	=  \frac 1 9 \frac{\eul^{-\beta \vecone\brk{s=s'}}}{2+\eul^{-\beta}} \kappa.
\end{align*}
Finally, we obtain
\begin{align}
	\kappa_{s, s'} = & \sum_{c = 1}^3 \kappa_{c, s, s'} =  \frac 1 3 \frac{\eul^{-\beta \vecone\brk{s=s'}}}{2+\eul^{-\beta}} \kappa,
\end{align}
thereby completing the proof.
\end{proof}

\begin{proof}[Proof of \Prop~\ref{prop_flat}]
	In light of \Cor~\ref{cor_eigenvector}, the remaining task is to verify \eqref{eqODEcomplicated1}--\eqref{eqODEcomplicated2}.
	To this end, we simply differentiate $w_\theta$ and $u_{c,\theta}$ from \eqref{eq_prop_flat_w}--\eqref{eq_prop_flat_u}.
	Hence, recalling $\kappa$ from~\eqref{eqkappalambda} and using \Lem~\ref{lemma_kappa} and \eqref{eq_prop_flat_1}, we obtain 
\begin{align*}
	\frac{\dd w_{s, d_1, d_2, d_3}}{\dd t} = & -\frac {\kappa \bc{{d_1+d_2+d_3}} w_{d_1+d_2+d_3}}  {3\sum_{\ell=1}^\Delta \ell(u_\ell+w_\ell)} \binom{d_1+d_2+d_3}{d_1, d_2, d_3}  \prod_{s'=1}^{3} \bc{\frac{\eul^{-\beta \vecone\brk{s=s'}}}{2+\eul^{-\beta}}}^{d_{s'}}\\
	= & - \frac{ \kappa \bc{{d_1+d_2+d_3}} w_{s, d_1, d_2, d_3} }{\sum_{\ell=1}^\Delta \ell(u_\ell+w_\ell)} = - \sum_{s'=1}^3\frac{   d_{s'} \kappa_{s',s} w_{s, d_1, d_2, d_3} }{e_{s,s'}},
\end{align*}
thereby verifying \eqref{eqODEcomplicated1}.

Moving on to \eqref{eqODEcomplicated2} and setting $\theta=(s,d_1,d_2,d_3)$, we obtain
\begin{align*}
	\frac{\dd u_{c,\theta}}{\dd t}=& 
	 \frac19\Bigg(\frac{\kappa\vecone\{d_1+d_2+d_3<\Delta\}\bc{(d_1+d_2+d_3+1)(w_{d_1+d_2+d_3+1}+u_{d_1+d_2+d_3+1}/3)-\bc{d_1+d_2+d_3} u_{d_1+d_2+d_3}}}{\sum_{i=1}^\Delta i(u_i+w_i)}\\
								   &\qquad-\frac{(d_1+d_2+d_3)^\alpha u_{d_1+d_2+d_3}}{\sum_{i=1}^\Delta i^\alpha u_i}\Bigg)  \binom{d_1+d_2+d_3}{d_1, d_2, d_3}  \prod_{s'=1}^{3} \bc{\frac{\eul^{-\beta \vecone\brk{s=s'}}}{2+\eul^{-\beta}}}^{d_{s'}}\\
	 =& 
	 - \frac{(d_1+d_2+d_3) \kappa u_{c,\theta}}{\sum_{i=1}^\Delta i(u_i+w_i)} -\frac{(d_1+d_2+d_3)^\alpha u_{c,\theta}}{\sum_{i=1}^\Delta i^\alpha u_i} 
	 \\&+ \Bigg(\frac{\kappa(d_1+d_2+d_3+1)(w_{d_1+d_2+d_3+1})}{\sum_{i=1}^\Delta i(u_i+w_i)}\Bigg)\frac {1} 9 \binom{d_1+d_2+d_3}{d_1, d_2, d_3}  \prod_{s'=1}^{3} \bc{\frac{\eul^{-\beta \vecone\brk{s=s'}}}{2+\eul^{-\beta}}}^{d_{s'}} \\
	 & + \Bigg(\frac{\kappa(d_1+d_2+d_3+1)(u_{d_1+d_2+d_3+1}/3)}{\sum_{i=1}^\Delta i(u_i+w_i)}\Bigg) \frac {1} 9 \binom{d_1+d_2+d_3}{d_1, d_2, d_3}  \prod_{s'=1}^{3} \bc{\frac{\eul^{-\beta \vecone\brk{s=s'}}}{2+\eul^{-\beta}}}^{d_{s'}}\\ 
	 = & 
	 - \frac{(d_1+d_2+d_3) \kappa u_{c,\theta}}{\sum_{i=1}^\Delta i(u_i+w_i)} -\frac{(d_1+d_2+d_3)^\alpha u_{c,\theta}}{\sum_{i=1}^\Delta i^\alpha u_i} 
	 + \sum_{s'=1}^3\frac{\kappa_{c,s',s}(d_{s'}+1)(w_{\theta^{+s'}})}{e_{s,s'}} 
	 + \sum_{s'=1}^3\frac{\kappa_{c,s',s}(d_{s'}+1)(u_{\theta^{+s'}})}{e_{s,s'}},
\end{align*}
as desired.
\end{proof}

\section{Proof of \Prop~\ref{prop_cleanup}}\label{sec_prop_cleanup}

\noindent
We prove \Prop~\ref{prop_cleanup} by showing that breadth-first-search from a random vertex of $\G^*_{\Delta,t}$ can be coupled with a sub-critical multi-type branching process.
One set of types of the branching process, corresponding to the vertices of the graph, are pairs $(s,\ell)$, where $s\in\{1,2,3\}$ represents the planted colour and $1\leq \ell\leq\Delta$ the total degree of the vertex.
The second set of types, corresponding to edges, are pairs $(s,s')$ of colours.
Each vertex type pair $(s,\ell)$ produces as offspring an expected number of $\ell\exp(-\beta\vecone\{s=s'\})/(2+\exp(-\beta))$ edges $(s,s')$ for every $s'\in\{1,2,3\}$.
Furthermore, the offspring of an edge type $(s,s')$ is a single $(s',\ell)$ vertex, where $\ell$ is drawn from the distribution $\ell(\vu_\ell+\vw_\ell)/(\sum_{i=1}^\Delta i(\vu_i+\vw_i))$.
Fact~\ref{prop_deferred}, Fact~\ref{prop_diffeq} and \Prop~\ref{prop_flat} ensure that this branching process models BFS on $\G^*_{\Delta,t}$ \whp

In analogy to the considerations in \Sec~\ref{sec_prop_flat}, we obtain the offspring matrix of the aforementioned branching process as follows.
Let
\begin{align}\label{eq_prop_cleanup_0}
	{M_{\mathrm{T}}^V}_{(s_2, s_3),(s_1, \ell)} = & \vecone\cbc{s_1 = s_2} \frac{(\ell-1) \eul^{-\beta\vecone\cbc{s_2 = s_3}}}{2+\eul^{-\beta}},&
	{M_{\mathrm{T}}^E}_{(s_3, \ell)(s_1, s_2)} = & \vecone\cbc{s_2 = s_3} \frac{(u_\ell + w_\ell)\ell}{u^{(1)}+w^{(1)}}.
\end{align}
Then the offspring matrix of the branching process reads $\begin{pmatrix}0&M_\mathrm{T}^V\\M_\mathrm{T}^E&0\end{pmatrix}$.
Hence, as in the proof of \Lem~\ref{lem_eigenvector} it suffices to show that under Assumption~\ref{assumption_ode} the matrix $M_{\mathrm T}=M_{\mathrm T}^VM_{\mathrm T}^E$ satisfies
\begin{align}\label{eq_prop_cleanup_1}
	\|M_{\mathrm T}\|<1.
\end{align}

Using~\eqref{eq_prop_cleanup_0}, we compute
\begin{align}\nonumber
	{M_{\mathrm{T}}}_{(s_3, s_4),(s_1, s_2)} %= & \vecone\cbc{s_2=s_3} \sum_{d=1}^{\infty} \frac{(u_d+w_d) d (d-1) \eul^{-\beta\vecone\cbc{s_3=s_4}}}{\sum_{v\in V^{s_1} } d_{v,s_2} (2+\eul^{-\beta})}\\ 
	=&  \vecone\cbc{s_2=s_3} \frac{\eul^{-\beta\vecone\cbc {s_3=s_4}}}{(2+\eul^{-\beta})}\sum_{\ell=1}^{\infty} \frac{(u_\ell+w_\ell) \ell (\ell-1)} {u^{(1)}+w^{(1)}}\\% {\sum_\ell \ell(\vu_\ell+\vw_\ell) } \\ 
	=& \vecone\cbc{s_2=s_3} \frac{\eul^{-\beta\vecone\cbc {s_3=s_4}}}{(2+\eul^{-\beta}) (u^{(1)}+w^{(1)})} \sum_{\ell=1}^{\infty} (u_\ell+w_\ell) \ell (\ell-2).
\end{align}
Further, recalling \eqref{eq_prop_flat_1}, we obtain
\begin{align}
	\nonumber
	\|M_{\mathrm{T}}\|&\leq \abs{\frac{\mom u 2 + \mom w 2 - \mom u 1 - \mom w 1 }{(2+\eul^{-\beta}) (u^{(1)}+w^{(1)})}}
%	\cdot
%	\norm{ 
%		\id \tensor\bc{ \begin{matrix}
%				1/\eul^{-\beta} & 1/\eul^{-\beta} & 1/\eul^{-\beta}\\
%				1 & 1 & 1 \\
%				1 & 1 & 1
%	\end{matrix}} } 
\cdot\norm{ \id\tensor \bc{ 
		 \begin{matrix}
				\eul^{-\beta} & \eul^{-\beta} & \eul^{-\beta}\\
				1 & 1 & 1 \\
				1 & 1 & 1
		\end{matrix}} }\\
				  &=\frac{\mom u 2 + \mom w 2 - \mom u 1 - \mom w 1 }{\mom u 1 + \mom w 1}.\label{eq_prop_cleanup_2}
\end{align}
Finally, \eqref{eqODEend} ensures that the r.h.s.\ of~\eqref{eq_prop_cleanup_2} is smaller than one.
The assertion~\eqref{eqprop_cleanup} follows from the fact that under the flat-white solution from \Prop~\ref{prop_flat} the planted colours $\SIGMA^*(\nix)$ and the colour lists at the termination of $\algo$ are asymptotically independent.

\section{Proofs of the main results}\label{sec_proof_main}

\noindent
To complete the proofs of \Thm~\ref{thm_main} and \Cor~\ref{cor_main} we just need to remind ourselves of the following elementary characterisation of the posterior distribution $\pr\brk{\SIGMA^*=\nix\mid\G^*}$.
For a graph $G=(V,E)$ and $\sigma\in\{1,2,3\}^V$ let $\Min(G,\sigma)$ be the number of edges $vw\in E$ such that $\sigma(v)=\sigma(w)$ and let $\Mout(G,\sigma)=|E|-\Min(G,\sigma)$.
Moreover, let
\begin{align*}
	\Nin(\sigma)&=\sum_{i=1}^3\binom{|\sigma^{-1}(i)|}2,&
	\Nout(\sigma)&=\sum_{1\leq i<j\leq 3}|\sigma^{-1}(i)|\cdot|\sigma^{-1}(j)|
\end{align*}
be the number of potential monochromatic/bichromatic edges, respectively.

\begin{fact}\label{lem_posterior}
	For a colouring $\sigma:V_n\to\{1,2,3\}$ and a graph $G$ we have
	\begin{align*}
		\pr\brk{\SIGMA^*=\sigma\mid\G^*=G}&\propto\din^{\Min(G,\sigma)}\dout^{\Mout(G,\sigma)}\bc{1-\frac\din n}^{\Nin(\sigma)-\Min(G,\sigma)}\bc{1-\frac\dout n}^{\Nout(\sigma)-\Mout(G,\sigma)}.
	\end{align*}
\end{fact}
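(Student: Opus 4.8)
The plan is to apply Bayes' rule and then exploit the conditional independence of the edges of $\G^*$ given the ground truth. First I would note that since $\SIGMA^*$ is chosen uniformly from $\{1,2,3\}^{V_n}$, the prior $\pr\brk{\SIGMA^*=\sigma}=3^{-n}$ does not depend on $\sigma$, while $\pr\brk{\G^*=G}$ is a constant once $G$ is fixed. Hence
\begin{align*}
	\pr\brk{\SIGMA^*=\sigma\mid\G^*=G}&\propto\pr\brk{\G^*=G\mid\SIGMA^*=\sigma},
\end{align*}
so it suffices to evaluate the likelihood on the right.

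Second, conditionally on $\SIGMA^*=\sigma$ the $\binom n2$ potential edges of $\G^*$ are mutually independent, the pair $\{v,w\}$ being present with probability $p_{v,w}$ from~\eqref{eqpvw}. Provided $n$ is large enough that $\din,\dout\leq n$ — which holds eventually, as $d,\beta$ are fixed — the truncation $1\wedge(\cdot)$ in~\eqref{eqpvw} is vacuous, so $p_{v,w}=\din/n$ when $\sigma(v)=\sigma(w)$ and $p_{v,w}=\dout/n$ otherwise. Partitioning the vertex pairs according to whether they are monochromatic or bichromatic under $\sigma$ and whether or not they form an edge of $G$, and counting each of the four classes — there are $\Min(G,\sigma)$ monochromatic edges, $\Mout(G,\sigma)$ bichromatic edges, $\Nin(\sigma)-\Min(G,\sigma)$ monochromatic non-edges and $\Nout(\sigma)-\Mout(G,\sigma)$ bichromatic non-edges — yields
\begin{align*}
	\pr\brk{\G^*=G\mid\SIGMA^*=\sigma}&=\bc{\frac\din n}^{\Min(G,\sigma)}\bc{\frac\dout n}^{\Mout(G,\sigma)}\bc{1-\frac\din n}^{\Nin(\sigma)-\Min(G,\sigma)}\bc{1-\frac\dout n}^{\Nout(\sigma)-\Mout(G,\sigma)}.
\end{align*}

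Finally I would observe that $\Min(G,\sigma)+\Mout(G,\sigma)=|E(G)|$ does not depend on $\sigma$, so the factor $n^{-\Min(G,\sigma)-\Mout(G,\sigma)}=n^{-|E(G)|}$ can be absorbed into the proportionality constant, leaving precisely the claimed expression. The only subtlety worth flagging is that the class sizes $\Nin(\sigma),\Nout(\sigma)$ genuinely depend on $\sigma$ through the sizes of its colour classes, so the two corresponding $\bc{1-\din/n}$ and $\bc{1-\dout/n}$ factors must be retained, whereas the total edge count may be dropped; beyond this bookkeeping there is no real obstacle, the statement being a routine Bayesian computation.
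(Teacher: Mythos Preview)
Your proof is correct and follows exactly the same route as the paper: apply Bayes' formula, use the uniform prior and the conditional independence of edges to write down the likelihood as a product over monochromatic/bichromatic edges and non-edges, and absorb the $\sigma$-independent factor $n^{-|E(G)|}$ into the proportionality constant. The paper's own proof is in fact terser than yours, omitting the explicit edge/non-edge count and the remark about the truncation $1\wedge(\cdot)$, so your write-up is if anything more complete.
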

\begin{proof}
	This is a routine calculation based on Bayes' formula that we detail for the sake of completeness.
	Recalling the definition~\eqref{eqdindout}--\eqref{eqpvw} of the \SBM, we obtain
	\begin{align*}
		\pr\brk{\SIGMA^*=\sigma\mid\G^*=G}&=\frac{\pr\brk{\G^*=G\mid\SIGMA^*=\sigma}\pr\brk{\SIGMA^*=\sigma}}{\pr\brk{\G^*=G}}\\
										  &\propto\din^{\Min(G,\sigma)}\dout^{\Mout(G,\sigma)}\bc{1-\frac\din n}^{\Nin(\sigma)-\Min(G,\sigma)}\bc{1-\frac\dout n}^{\Nout(\sigma)-\Mout(G,\sigma)},
	\end{align*}
	as claimed.
\end{proof}

\begin{proof}[Proof of \Thm~\ref{thm_main}]
	Let $\SIGMA^\dagger$ be the colouring from \Cor~\ref{cor_overlap}.
	Then \eqref{eq_cor_overlap} readily shows that
	\begin{align}\label{eq_thm_main_0}
		\agree(\SIGMA^\dagger)=o(1)
	\end{align}
	\whp\
	Furthermore, \Prop s~\ref{prop_high}, \ref{prop_condex} and~\ref{prop_cleanup} show that \whp
	\begin{align}\label{eq_thm_main_1}
		\Min(\G^*,\SIGMA^\dagger)&=o(n)&\mbox{and hence}&&
		\Mout(\G^*,\SIGMA^\dagger)&\sim dn/2.
	\end{align}
	Indeed, \Prop~\ref{prop_high} shows that the number of monochromatic edges $vw$ with $v,w\in V^\#$ is $o(n)$ \whp\
	Moreover, \Prop~\ref{prop_condex} states that \whp\ the number of bad vertices produced by $\algo$ is $o(n)$ \whp\ 
	Since all bad vertices are of bounded degree, this implies that the number of ensuing monochromatic edges is $o(n)$ \whp\
	Finally, \Prop~\ref{prop_cleanup} shows that \whp\ the number of monochromatic edges amongst the leftover vertices after $\algo$ finishes is $o(n)$ \whp\
	In addition, \eqref{eq_cor_overlap} implies that \whp\ we have
	\begin{align}\label{eq_thm_main_2}
		\Nin(\SIGMA^\dagger)&\sim3\binom{n/3}2\sim\frac13\binom n2,&
		\Nout(\SIGMA^\dagger)&\sim\frac23\binom n2.
	\end{align}

	Combining~\eqref{eq_thm_main_1}--\eqref{eq_thm_main_2} with Fact~\ref{lem_posterior}, we conclude that \whp
	\begin{align}\label{eq_thm_main_3}
		\pr\brk{\SIGMA^*=\SIGMA^\dagger\mid\G^*=G}&\propto\Theta\bc{\dout^{dn/2}}.
	\end{align}
	Since $\din<\dout$ for any $\beta>0$, \eqref{eq_thm_main_3} implies that \whp
	\begin{align}\label{eq_thm_main_5}
		\log\pr\brk{\SIGMA^*=\SIGMA^\dagger\mid\G^*=G}&\sim\max_{\sigma\in\{1,2,3\}^{V_n}}\log\pr\brk{\SIGMA^*=\sigma\mid\G^*=G}.
	\end{align}
	Finally, it is an immediate consequence of the definition \eqref{eqdindout}--\eqref{eqpvw} of the \SBM\ that \whp
	\begin{align}\label{eq_thm_main_4}
		\min(\G^*,\SIGMA^*)&\sim\frac {dn}{4\eul^\beta+2}.
	\end{align}
	Combining \eqref{eq_thm_main_3}--\eqref{eq_thm_main_4} with Fact~\ref{lem_posterior}, we conclude that \whp
	\begin{align}\label{eq_thm_main_6}
		L_{\G^*}(\SIGMA^*)=\frac{d\beta}{4\eul^{\beta}+2}.
	\end{align}
	Thus, the assertion follows from \eqref{eq_thm_main_0}, \eqref{eq_thm_main_5} and \eqref{eq_thm_main_6}.
\end{proof}

\begin{proof}[Proof of \Cor~\ref{cor_main}]
	As in the proof of \Thm~\ref{thm_main} we consider the colouring $\SIGMA^\dagger$ from \Cor~\ref{cor_overlap}, which satisfies \eqref{eq_thm_main_0} \whp\
	We claim that $\SIGMA^\dagger$ is indeed a proper 3-colouring of $\G^*$ with probability $\Omega(1)$.
	Indeed, \Prop~\ref{prop_high} shows that $\SIGMA^\dagger$ does indeed induce a proper 3-colouring on the subgraph $\G^\#$ with probability $\Omega(1)$.
	Additionally, we claim that $\algo$ does not produce any bad vertices with probability $\Omega(1)$.
	To see this, we revisit the proof of \Prop~\ref{prop_condex} from \Sec~\ref{sec_prop_condex}.
	The key element of that proof is the directed rooted graph $\fT_t$.
	If $\fT_t$ is acyclic, then no bad vertex is produced in the $t$-th epoch.
	Furthermore, the proof of \Lem~\ref{lem_kappa} shows that under Assumption~\ref{assumption_ode} $\fT_t$ can be coupled with a sub-critical branching process. 
	Thus, the expected progeny of the branching process is bounded, while under assumption~\eqref{eq_cond_prop_condex} a linear number of unexplored vertices remain.
	In effect, during each epoch there is a probability of $O(1/n)$ that $\fT_t$ contains a cycle.
	Moreover, due to Fact~\ref{prop_deferred} the emergence of cycles in different epochs are conditionally independent given the $\sigma$-algebras $(\fD_t)_t$.
	Therefore, with probability $\Omega(1)$ the directed graph $\fT_t$ is acyclic for all epochs $t$.
	Finally, the proof of \Prop~\ref{prop_cleanup} in \Sec~\ref{sec_prop_cleanup} again relies on the fact that the sub-graph of $\G^*_\Delta$ left upon termination of $\algo$ is a sub-critical random graph.
	Hence, this graph is acyclic with probability $\Omega(1)$, in which case $\SIGMA^\dagger$ is a proper 3-colouring.

	To complete the proof of \Cor~\ref{cor_main} assume that $\SIGMA^\dagger$ is indeed a proper 3-colouring of $\G^*$ that satisfies \eqref{eq_thm_main_0}.
	\Whp\ the random graph $\G^*$ contains $\Omega(n)$ isolated vertices, which by construction receive uniformly random colours.
	Furthermore, \eqref{eq_cor_overlap} implies that \whp\ the colour classes $\SIGMA^{\dagger\,-1}(i)$ of $\SIGMA^\dagger$ satisfy $|\SIGMA^{\dagger\,-1}(i)|\sim n/3$ for $i=1,2,3$.
	Therefore, by suitably recolouring the isolated vertices we can obtain a perfectly balanced proper 3-colouring $\SIGMA^\ddagger$ of $\G^*$.
	Since Fact~\ref{lem_posterior} shows that a balanced proper 3-colouring maximises the posterior probability $\pr\brk{\SIGMA^*=\nix\mid\G^*}$, the assertion follows.
\end{proof}

\subsubsection*{Acknowledgement.}
The first author thanks Dimitris Achlioptas and Dimitris Papailiopoulos for helpful discussions that inspired this work.
Amin Coja-Oghlan and Lena Krieg are supported by DFG CO 646/3 and DFG CO 646/5.  Olga Scheftelowitsch is supported by DFG CO 646/5.

\begin{appendix}
\section{Numerics and Simulations}\label{apx:num_res}

\noindent
In this section we present some computational evidence in support of Assumption~\ref{assumption_ode}. 
One set of results is based on numerical solutions of the differential equations \eqref{eqODEinitial}--\eqref{eqkappa}.
Additionally, we implemented the $\algo$ algorithm, ran it on \sbm\ instances and compared the empirical values of the quantities on the l.h.s.\ of~\eqref{eqODEend}--\eqref{eqODElambda} with the values predicted by the numerical solutions to the ODEs.

%Here we differentiate between the numerical solutions of the differential equations and experimental results of running an implementation of \texttt{AM} on random SBMs. 
\subsection{Numerical solution of the ODEs}\label{apx_num}
We solved the ODEs \eqref{eqODEend}--\eqref{eqODElambda} numerically by means of Matlabs {\tt ode45} function, which relies on the Runge-Kutta method.
However, this approach did not work out of the box, i.e., it failed to deliver plausible solutions to the ODEs.
This failure has presumably be attributed to the fact that some of the functions involved, particularly the functions $u_\ell,w_\ell$ for larger values of $\ell$, take tiny numerical values, while others are much bigger.
In effect, a naive invocation of the ODE solver produced implausible negative values for some values that should be strictly positive because of their underlying combinatorial interpretation.
To keep the solver on track, we therefore truncated the functions at zero.
In other words, in order to obtain plausible numerical solutions to the ODEs we replaced the functions $u_\ell,w_\ell$ by $\max(u_\ell,0)$, $\max(w_\ell,0)$. 
The numerical values of 
%First we consider the maximal value of $d$ such that for a given $\alpha$ the numerical solution of the ODEs predicts that $\max_t\lambda(t)<1$.
%The black curve from Figure \ref{fig:dalpha} displays these values $d=d(\alpha)$.
%For example, for $\alpha=14$ we obtain $d=4.03$.
\begin{align*}
	d_{\max}(\alpha) &= \max\cbc{d>0: \max_{t:\gamma(t)>0}\lambda(t)<1}, \qquad\mbox{where}\\
	\lambda(t)&=
		\frac{2\sum_{i=1}^\Delta i(i-1)u_i(t)}{3\sum_{i=1}^\Delta i(w_i(t)+u_i(t))},\\
	\gamma(t)& = \sum_{\ell=0}^\Delta\ell(\ell-2)(u_\ell(t)+w_\ell(t))
\end{align*}
displayed in Figure~\ref{fig:dalpha}, are in perfect agreement with the values reported in~\cite{AchMoore3col}.
For example, for $\alpha=14$ we obtain $d=4.03$.
%\lk{There is something weird here. In Figure~\ref{fig:dalpha} not the value $\lambda(t)$ is displayed, but $d(\alpha)$ as defined below.}

\begin{figure}
	\includegraphics[width=0.6\columnwidth]{./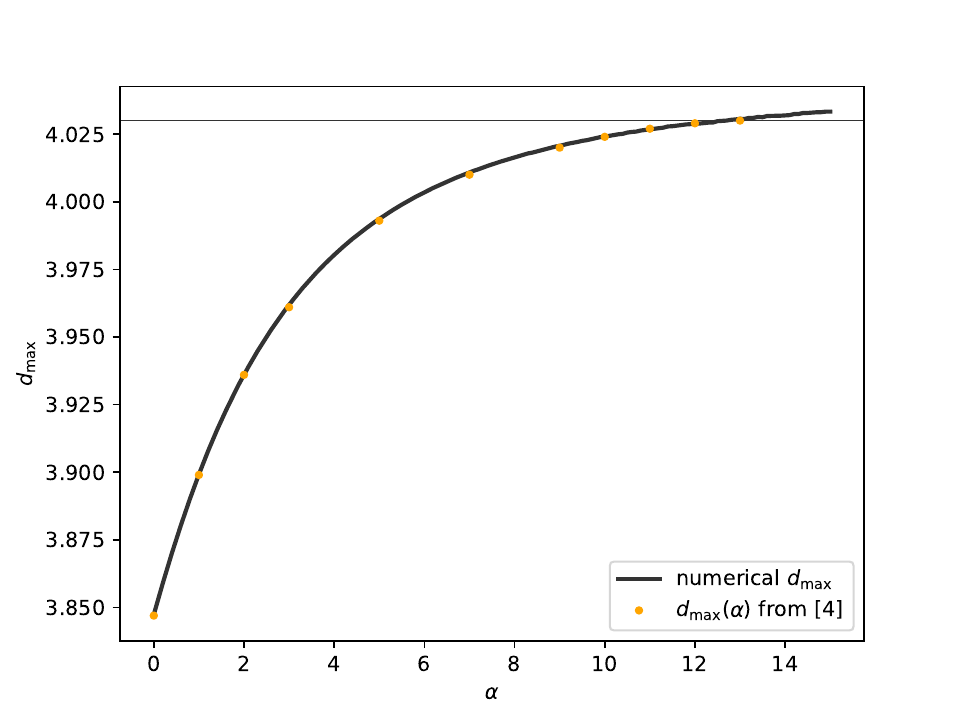}
	\caption{
		The black curve shows the maximum $d_{\max}=d_{\max}(\alpha)$ such that the eigenvalue $\lambda(t)$ remains bounded below according to the numerical solution of the ODEs.
		The orange dots display the corresponding values reported in \cite{AchMoore3col}.
		The horizontal line is placed at $d=4.03$.
	}
	\label{fig:dalpha}
\end{figure}

\subsection{Experiments with the $\algo$ algorithm}\label{apx_ex}

\begin{figure}[h]
	\hspace{0.3cm}
	\begin{subfigure}{0.49\textwidth}
		\includegraphics[width=1\columnwidth]{./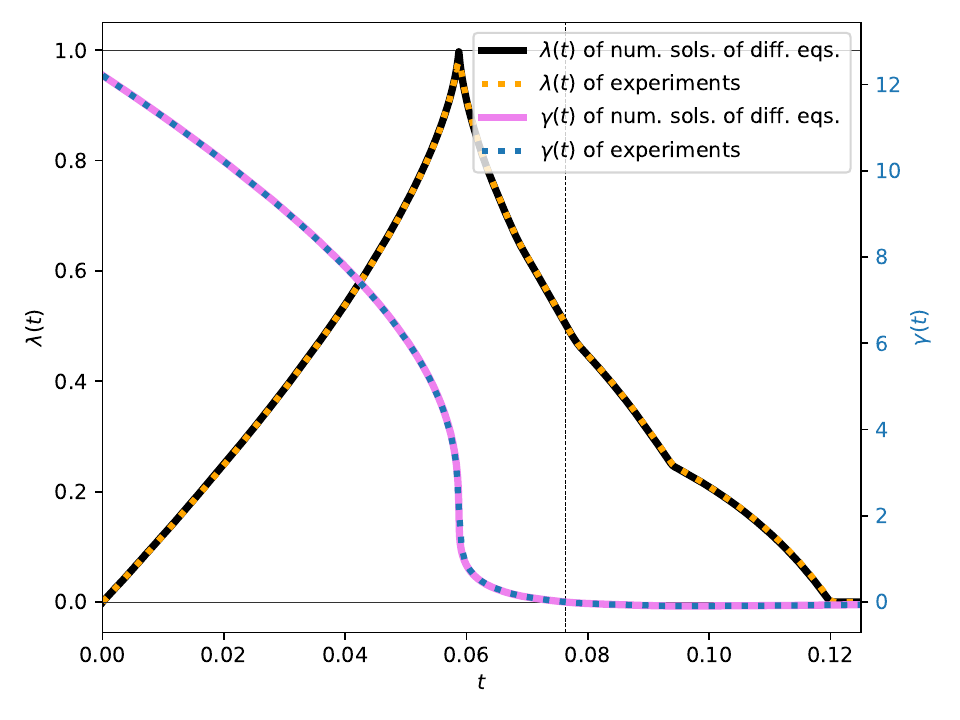}
	\end{subfigure} 
	\hspace{-0.3cm}
	\begin{subfigure}{0.49\textwidth}
		\includegraphics[width=1\columnwidth]{./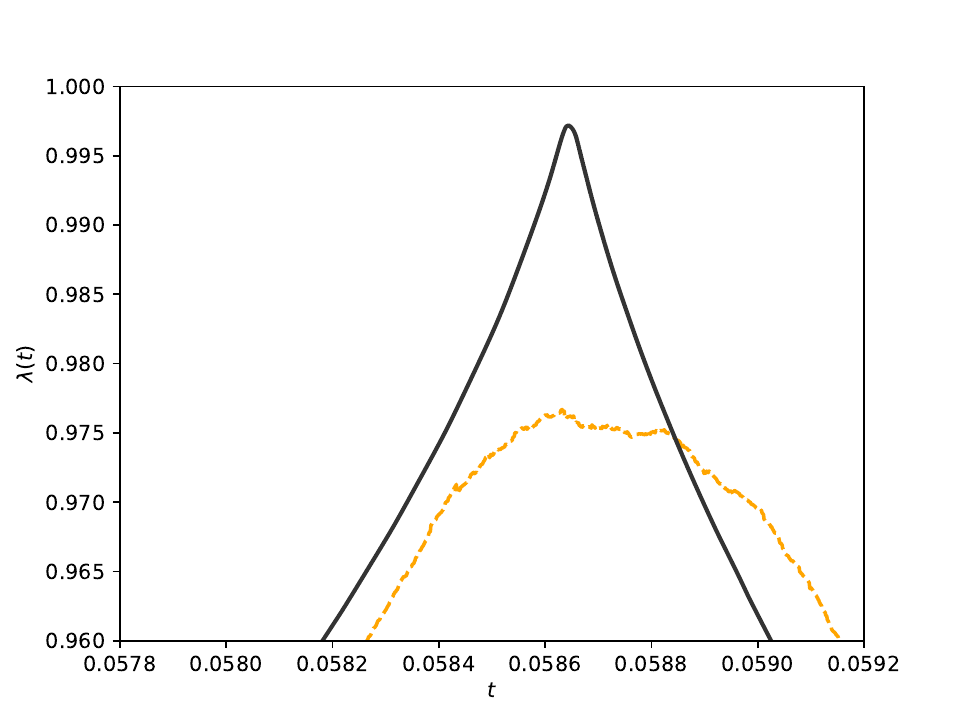}
	\end{subfigure}
	%	\vspace{-3mm}
	\caption{
		%Both plots show the same functions, the right plot is a zoomed in version of the left. 
		{\em Left:}	The eigenvalue $\lambda(t)$ at each time $t$ for $\alpha = 15$, $d=4.03$ and $\beta = 6$, i.e., slightly above the Kesten-Stigum threshold.
		The black curve displays the numerical solutions to the ODEs.
		The orange dotted curve is the empirical value of $\lambda(t)$ averaged over $10$ runs with $n=10^{6}$.
		The magenta line displays the numerical ODE value of $\gamma(t)$, while the blue dotted shows the experimental value. 
		The dashed vertical line marks the last point $t^*$ where \eqref{eqODEend} holds.
		{\em Right:} the black/orange lines from the left figure zoomed in on the peak.
	}\label{fig:lambda}
\end{figure}
%\vspace{-8mm}
\begin{figure}[h]
	\includegraphics[width=0.6\columnwidth]{./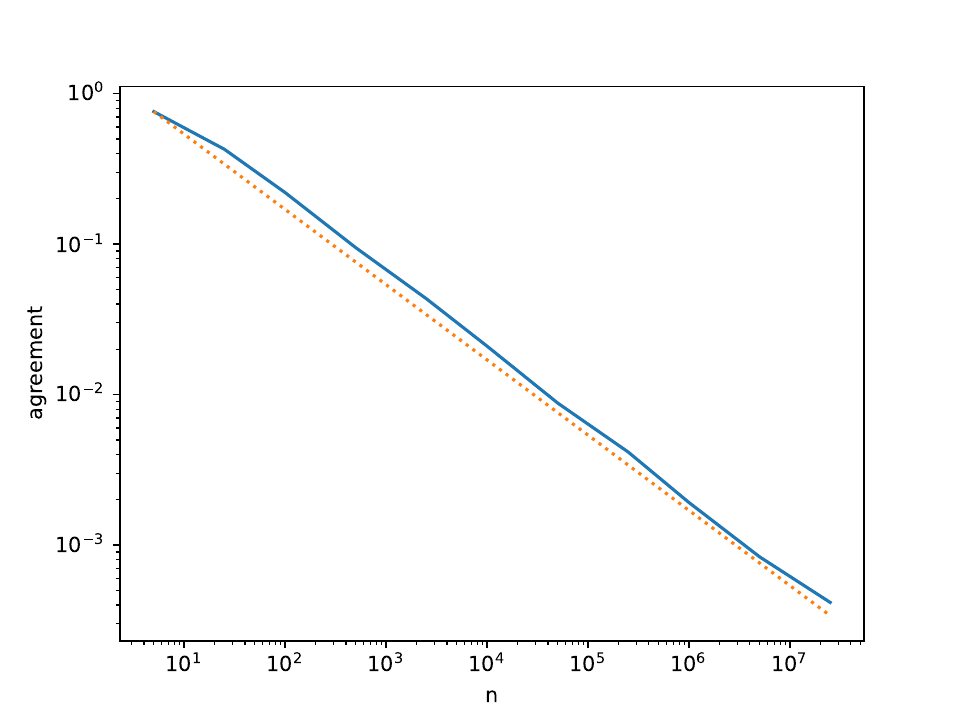}
	%	\vspace{-3mm}
	\caption{The blue line shows the average empirical agreement (as defined in~\eqref{eqagree}) over 20 runs of $\algo$ with $\beta=6$, $d=4.03$ and $\alpha=15$ on a $(\log,\log)$-scale. 
		%		As seen in the experimental results as $n$ grows larger the overlap tends to zero, meaning that the painted colours are independent from the planted colours.
		The orange dotted line is just a straight line with slope $-1/2$.
		This is the expected gradient of the blue line, since the agreement of two independent random colourings is of order  $n^{-1/2}$.
	} %u2 + w2 - 2*u1 - 2*w1
	\label{fig:overlap}
\end{figure}

Partly in order to check the validity of the truncation discussed in Appendix~\ref{apx_num} and partly to get an idea of the rate of convergence, we implemented the $\algo$ algorithm and ran experiments on the \sbm\ for various values of $\alpha,\beta$ and $d$.
In particular, we tracked the experimental values of $\lambda(t)$ and $\gamma(t)$ with $u_\ell,w_\ell$ taking their (random) empirical values.

By comparison to the theoretical version of the algorithm (Algorithm~\ref{alg_AM}), the implementation comes with two modifications.
First, we do not truncate the high degree vertices.
Remember that the main purpose of this truncation was to facilitate the method of differential equations, while from a purely algorithmic viewpoint a separate treatment of high-degree vertices should not be necessary.
Second, we ran the algorithm until all vertices got coloured.
To this end the algorithm as implemented just picks a random vertex of list size three and assigns it a random colour whenever the algorithm runs out of vertices of list size one or two.

To our moderate surprise, even for moderate numbers $n$ of vertices the experimental values of $\lambda(t),\gamma(t)$ emerged to be in agreement with the values predicted by the numerical solution to the ODEs, to the extent that the curves are difficult to tell apart; see Figure~\ref{fig:lambda}.
We produced this figure as follows.
We compared the maximum eigenvalue $\lambda(t)$ predicted by the ODEs with the experimental value.
The left side of figure \ref{fig:lambda} displays this comparison: the values seem to align perfectly.
In particular, the largest eigenvalue is always smaller than $1$, both numerically and experimentally.
At first glance the eigenvalue seems to have a non differentiable point in the middle.
Yet zooming in shows that the peak is indeed a smooth curve.
In this close up we can also see a small discrepancy between the experimental (orange) and numerical (black) values.

%The blue and magenta line of figure \ref{fig:lambda} shows the comparison of $\mom u a + \mom w 2 - 2(\mom u 1 + \mom w 2)$, the lhs of condition \ref{eqODEend}, in the numerical solution of the differential equations as well as the experiments. 
%Again, both lines seem to almost completely align. 
%Figure \ref{fig:lambda} visualize that the numerical solutions of the differential equations coincide quiet well with reality. 

Additionally, we ran the implementation of $\algo$ to investigate the overlap between the colouring $\sigma$ produced by the algorithm and the ground truth $\SIGMA^*$, as well as the number of bad vertices.
Figure~\ref{fig:overlap} shows the agreement $\agree(\sigma)$ for different values of $n$.
As one would expect if $\sigma$ and $\SIGMA^*$ are actually uncorrelated, the empirical agreement scales as $\Theta(n^{-1/2})$.
Finally, Figure~\ref{fig:badvertices} displays the number of bad vertices produced by $\algo$ along with the frequency of experiments that produced no bad vertices at all.

%Figure \ref{fig:badvertices} visualizes the bad vertices $\algo$ creates change when approaching the value $d=4.03$.
%Especially figure \ref{fig:badvertices_prob} shows that once we exceed $d=4.03$ the fraction of experiments that create no bad vertices is $0$, so in none of the experiments $\algo$ created a valid colouring. 

%In figure \ref{fig:overlap} the overlap of the planted colouring and the colouring created by $\algo$ is displayed for graphs of size $n$.
%The overlap tends to zero with $n$ quiet drastically, exactly as per theorem \ref{thm_main} implied.

\begin{figure}
	\begin{subfigure}{0.49\textwidth}
		\includegraphics[width=1\columnwidth]{./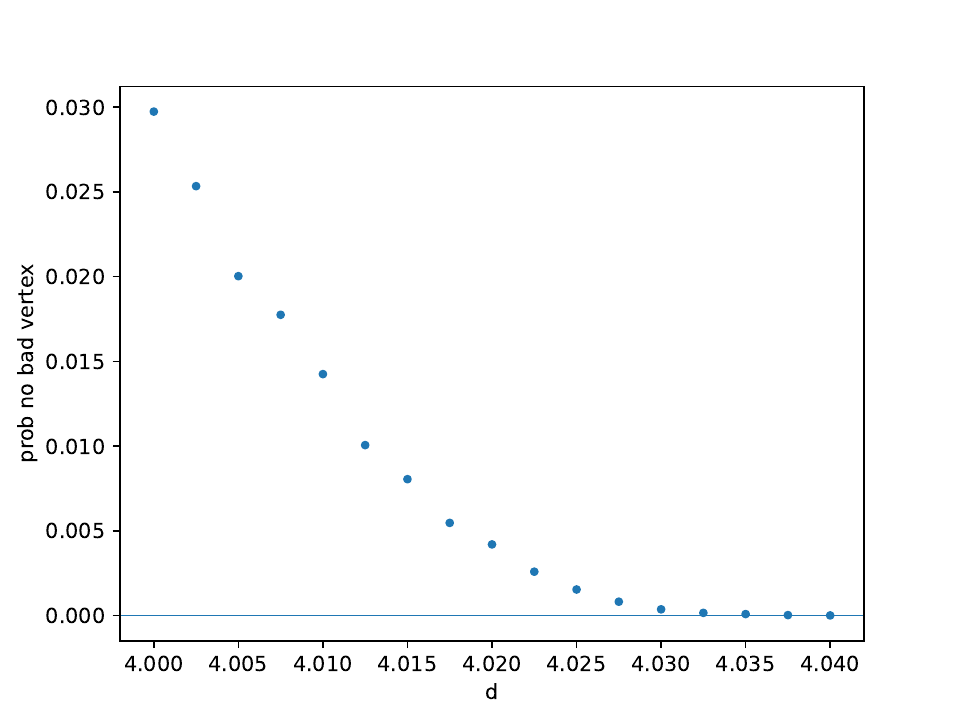}
		\caption{Fraction of experiments with \emph{no} bad vertices, $\algo$ found valid colouring}
		\label{fig:badvertices_prob}
	\end{subfigure} 
%	\hspace{-0.3cm}
	\begin{subfigure}{0.49\textwidth}
		\includegraphics[width=1\columnwidth]{./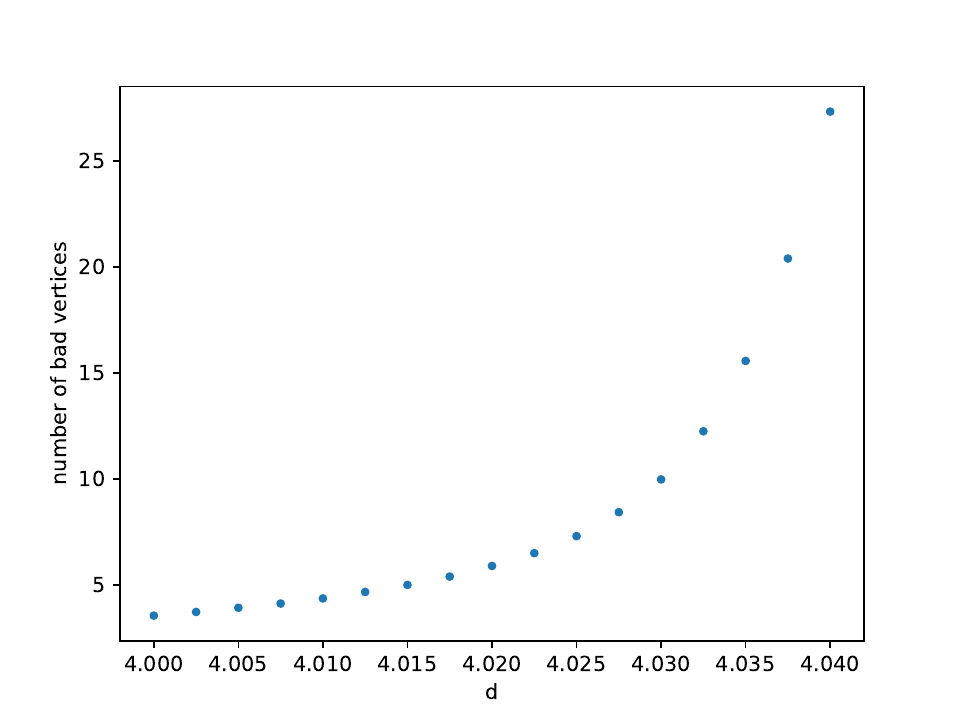}
		\caption{Average number of bad vertices}
	\end{subfigure}
	\caption{
		Bad vertices in $10^5$ simulated runs with $n=10^6$, $\alpha=15$ and $\beta=6$.	}
	\label{fig:badvertices}
\end{figure}

\end{appendix}


\begin{thebibliography}{29}
\bibitem{AbbeSurvey} E.\ Abbe: Community detection and stochastic block models: recent developments.  Journal of Machine Learning Research {\bf18} (2018) 1--86.
\bibitem{abbe2015detection} E.\ Abbe, C.\ Sandon: Detection in the stochastic block model with multiple clusters: proof of the achievability conjectures, acyclic BP, and the information-computation gap.  Communications on Pure and Applied Mathematics {\bf71} (2018) 1334--1406.
\bibitem{Abbe} E.\ Abbe, A.\ Montanari: Conditional random fields, planted constraint satisfaction and entropy concentration.  Theory of Computing {\bf11} (2015) 413--443.
\bibitem{AchMoore3col} D.\ Achlioptas, C.\ Moore: Almost all graphs of degree 4 are 3-colorable.  Journal of Computer and System Sciences {\bf 67} (2003) 441--471.
\bibitem{AchNaor} D.~Achlioptas, A.~Naor: The two possible values of the chromatic number of a random graph.  Annals of Mathematics {\bf 162} (2005) 1333--1349.
%\bibitem{AlonKahale} N.\ Alon, N.\ Kahale: A spectral technique for coloring random 3-colorable graphs.  SIAM J.\ Comput.\ {\bf 26} (1997) 1733--1748
%\bibitem{ANP} D.\ Achlioptas, A.\ Naor, and Y.\ Peres: Rigorous location of phase transitions in hard optimization problems. Nature \textbf{435} (2005) 759--764.
\bibitem{AlonKahale} N.~Alon, N.~Kahale: A spectral technique for coloring random 3-colorable graphs.  SIAM J.\ Comput.\ {\bf 26} (1997) 1733--1748.
	\bibitem{catherine} P.\ Ayre, A.\ Coja-Oghlan, C.\ Greenhill: Lower bounds on the chromatic number of random graphs.  Combinatorica {\bf42} (2022) 617--658.
	\bibitem{Cond} V.\ Bapst, A.\ Coja-Oghlan, S.\ Hetterich, F.\ Rassmann, D.\ Vilenchik: The condensation phase transition in random graph coloring.  Communications in Mathematical Physics {\bf341} (2016) 543--606.
%\bibitem{Banks} J.\ Banks, C.\ Moore, J.\ Neeman, P.\ Netrapalli: Information-theoretic thresholds for community detection in sparse networks.  Proc.\ 29th COLT (2016) 383--416.
%\bibitem{BLM} C.\ Bordenave, M.\ Lelarge, L.\ Massouli\'e: Non-backtracking spectrum of random graphs: community detection and non-regular Ramanujan graphs.  Proc.\ 56th FOCS (2015) 1347--1357.
%\bibitem{CDGS} P.\ Contucci, S.\ Dommers, C.\ Giardina, S.\ Starr: Antiferromagnetic Potts model on the Erd\H os-R\'enyi random graph.  Communications in Mathematical Physics {\bf323} (2013) 517--554.
\bibitem{CEJKK} A.\ Coja-Oghlan, C.\ Efthymiou, N.\ Jaafari, M.\ Kang, T.\ Kapetanopoulos: Charting the replica symmetric phase. Communications in Mathematical Physics {\bf 359} (2018) 603--698.
\bibitem{CKPZ} A.\ Coja-Oghlan, F.\ Krzakala, W.\ Perkins, L.\ Zdeborov\'a: Information-theoretic thresholds from the cavity method. Advances in Mathematics {\bf 333} (2018) 694--795. 
	\bibitem{Danny} A.\ Coja-Oghlan, D.\ Vilenchik: The chromatic number of random graphs for most average degrees. International Mathematics Research Notices {\bf2016} (2016) 5801--5859.
\bibitem{Decelle} A.\ Decelle, F.\ Krzakala, C.\ Moore, L.\ Zdeborov\'a: Asymptotic analysis of the stochastic block model for modular networks and its algorithmic applications.  Phys.\ Rev.\ E {\bf84} (2011) 066106.
%\bibitem{Donoho_2013} D.\ Donoho, A.\ Javanmard , A.\ Montanari: Information-theoretically optimal compressed sensing via spatial coupling and approximate message passing.  IEEE Transactions on Information Theory {\bf 59} (2013) 7434--7464.
\bibitem{ER60} P.~Erd\H os, A.~R\'enyi, On the evolution of random graphs.  Magyar Tud.\ Akad.\ Mat.\ Kutat\'o Int.\ K\"ozl~\textbf{5} (1960) 17--61. 
\bibitem{Holland} P.\ Holland, K.\ Laskey, S.\ Leinhardt: Stochastic blockmodels: first steps.  Social Networks {\bf5} (1983) 109--137.
%\bibitem{Janson_2011} S. Janson, T. Luczak, A. Rucinski: Random Graphs. John Wiley \& Sons (2011).
%\bibitem{Krzakala_2012} F.\ Krzakala, M.\ M\'ezard, F.\ Sausset, Y.\ Sun, L.\ Zdeborov\'a: Statistical-physics-based reconstruction in compressed sensing. Physical Review X {\bf 2} (2012) 021005.
%\bibitem{Kudekar_2010_2} S.\ Kudekar, H.\ Pfister: The effect of spatial coupling on compressive sensing.  Proc.~48th Allerton (2010)  347--353.
%\bibitem{Kudekar_2013} S.\ Kudekar, T.\ Richardson, R.\ Urbanke: Spatially coupled ensembles universally achieve capacity under belief propagation. IEEE Transaction on Information Theory {\bf 59} (2013) 7761--7813.
%\bibitem{Kumar_2014}  S.\ Kumar, A.\ Young, N.\ Macris, H.\ Pfister: Threshold saturation for spatially coupled LDPC and LDGM codes on BMS channels. IEEE Transactions on Information Theory {\bf 60} (2014) 7389--7415.
\bibitem{pnas} F.~Krzakala, A.~Montanari, F.~Ricci-Tersenghi, G.~Semerjian, L.~Zdeborov\'a: Gibbs states and the set of solutions of random constraint satisfaction problems.  Proc.~National Academy of Sciences {\bf104} (2007) 10318--10323.
	\bibitem{LPA} S.\ Liu, D.\ Papailiopoulos, D.\ Achlioptas: Bad global minima exist and sgd can reach them.  Proc.\ 34th NIPS (2020) 8543--8552.
%\bibitem{LenkaFlorent} L.~Zdeborov\'a, F.~Krzakala: Phase transition in the coloring of random graphs.  Phys.\ Rev.\ E {\bf76} (2007) 031131.
%\bibitem{massoulie2014community} L.\ Massouli{\'e}: Community detection thresholds and the weak Ramanujan property.  Proceedings of the 46th Annual ACM Symposium on Theory of Computing (2014) 694--703.
\bibitem{MM} M.~M\'ezard, A.~Montanari: Information, physics and computation.  Oxford University Press~2009.
%\bibitem{Molloy} M.~Molloy: The freezing threshold for $k$-colourings of a random graph.  Proc.\ 43rd STOC (2012) 921--930.
\bibitem{MolloyReed} M.~Molloy, B.~Reed: A critical point for random graphs with a given degree sequence. Random Structures and Algorithms {\bf6} (1995) 161--180.
\bibitem{Moore} C.~Moore: The computer science and physics of community detection: landscapes, phase transitions, and hardness.   Bulletin of the EATCS {\bf 121} (2017).
\bibitem{mossel2013proof} E.\ Mossel, J.\ Neeman, A.\ Sly: A proof of the block model threshold conjecture.  Combinatorica {\bf38} (2017) 665--708.
\bibitem{Mossel} E.\ Mossel, J.\ Neeman, A.\ Sly: Reconstruction and estimation in the planted partition model.  Probability Theory and Related Fields (2014) 1--31.
%\bibitem{MP1} M.\ M\'ezard, G.\ Parisi: The Bethe lattice spin glass revisited.  Eur.\ Phys.\ J.\ B {\bf20} (2001) 217--233.
%\bibitem{MP2} M.\ M\'ezard, G.\ Parisi: The cavity method at zero temperature.  Journal of Statistical Physics {\bf111} (2003)  1--34.
%\bibitem{MPZ} M.~M\'ezard, G.~Parisi, R.~Zecchina: Analytic and algorithmic solution of random satisfiability problems.  Science {\bf 297} (2002) 812--815.
%\bibitem{Nor} A.\ Coja-Oghlan, N.\ Jaafari: On the Potts model on random graphs.  Electronic Journal of Combinatorics {\bf23} (2016) P4.3.
%\bibitem{Pearl}J.~Pearl: Probabilistic reasoning in intelligent systems: networks of plausible inference. Morgan Kaufmann (1988).
\bibitem{MSS} E.\ Mossel, A.\ Sly, Y.\ Sohn: Weak recovery, hypothesis testing, and mutual information in stochastic block models and planted factor graphs. arXiv:2406.15957 (2024).
\bibitem{Sly23} E.\ Mossel, A.\ Sly, Y.\ Sohn: Exact phase transitions for stochastic block models and reconstruction on trees. Proc.\ 55th STOC (2023) 96--102.
%\bibitem{Ricci19} F.\ Ricci-Tersenghi, G. \ Semerjian, L. \ Zdeborova: Typology of phase transitions in Bayesian inference problems. Phys. Rev. E {\bf 99} (2019) 042109.
%\bibitem{quiet} F.\ Krzakala and L.\ Zdeborov\'a: Hiding quiet solutions in random constraint satisfaction problems.  Phys.\ Rev.\ Lett.\ {\bf102} (2009) 238701.
%\bibitem{Silent} V.\ Bapst, A.\ Coja-Oghlan, C.\ Efthymiou: Planting colourings silently.  Combinatorics, Probability and Computing {\bf 26} (2017) 338--366. 
%\bibitem{Victor} V.\ Bapst, A.\ Coja-Oghlan: Harnessing the Bethe free energy.  Random Structures and Algorithms {\bf 49} (2016) 694--741.
\bibitem{Wormald} N.~Wormald: Differential equations for random processes and random graphs.  Ann.\ Appl.\ Probab.\ {\bf5} (1995) 1217--1235.
%\bibitem{Zdeborova_2016} L.\ Zdeborov\'a, F.\ Krzakala: Statistical physics of inference: thresholds and algorithms. Advances in Physics {\bf 65} (2016) 453--552.

\end{thebibliography}
\end{document}